\NeedsTeXFormat{LaTeX2e}
\documentclass[a4paper,12pt,reqno]{amsart}
\usepackage{amssymb}
\usepackage{amsmath}
\usepackage{mathtools}
\usepackage{enumitem}
\usepackage{mathrsfs}
\usepackage[all]{xy}
\setcounter{tocdepth}{1}
\usepackage{mathtools}
\usepackage{hyperref}
\usepackage{url}
\usepackage{bbm}

\usepackage[british]{babel}
\usepackage[margin=1.2in]{geometry}
\setlength{\belowcaptionskip}{-0.3em}

\numberwithin{equation}{section} \numberwithin{figure}{section}

\DeclareMathOperator{\Pic}{Pic} 
\DeclareMathOperator{\Gal}{Gal} 
\DeclareMathOperator{\Aut}{Aut} 
\DeclareMathOperator{\Spec}{Spec}

\DeclareMathOperator{\Br}{Br} 

\DeclareMathOperator{\inv}{inv}

\DeclareMathOperator{\Bl}{Bl}

\DeclareMathOperator{\chr}{char}

\DeclareMathOperator{\Frob}{Fr} 
\DeclareMathOperator{\HH}{H}

\newcommand{\OO}{\mathcal{O}}

\newcommand{\PGL}{\textrm{PGL}}

\newcommand{\Esix}{{\mathbf E}_6}
\newcommand{\Eseven}{{\mathbf E}_7}

\newcommand{\E}{{\mathbf E}}

\newcommand\FF{\mathbb{F}}
\newcommand\PP{\mathbb{P}}
\newcommand\ZZ{\mathbb{Z}}
\newcommand\NN{\mathbb{N}}
\newcommand\QQ{\mathbb{Q}}

\newcommand\CC{\mathbb{C}}

\newcommand{\fp}{\mathfrak{p}}

\newtheorem{lemma}{Lemma}

\newtheorem{theorem}[lemma]{Theorem}
\newtheorem{proposition}[lemma]{Proposition}
\newtheorem{corollary}[lemma]{Corollary}
\numberwithin{table}{section}

\theoremstyle{definition}

\newtheorem{definition}[lemma]{Definition}
\newtheorem{remark}[lemma]{Remark}

\newtheorem*{ack}{Acknowledgements}

\numberwithin{lemma}{section}

\begin{document}

\title[Del Pezzo surfaces over finite fields]
{Del Pezzo surfaces over finite fields and their Frobenius traces}

\author{\sc Barinder Banwait}
\address{Barinder Banwait \\
CMR Surgical,  
Crome Lea Business Park\\
Madingley Road, Cambridge,
CB23 7PH, UK.}
\email{b.s.banwait86@gmail.com}

\author{\sc Francesc Fit\'{e}}
\address{Francesc   Fit\'{e} \\
Departament de Matem\`atiques\\ Universitat Polit\`ecnica de    Catalunya\\ BGSmath\\
Edifici Omega, C/Jordi Girona 1--3\\
08034 Barcelona\\ 
Catalonia.}
\email{francesc.fite@gmail.com}
\urladdr{https://mat-web.upc.edu/people/francesc.fite/}

\author{\sc Daniel Loughran}
\address{Daniel Loughran \\
School of Mathematics \\
University of Manchester \\
Oxford Road \\
Manchester \\
M13 9PL \\
UK.}
\email{daniel.loughran@manchester.ac.uk}
\urladdr{https://sites.google.com/site/danielloughran/}

\subjclass[2010]
{14G15  (primary), %   Diophantine geometry. Finite ground fields
14G05, %   	Rational points
14J20.   %	Surfaces and higher-dimensional varieties. Arithmetic ground fields
(secondary)}

\begin{abstract}
Let $S$ be a smooth cubic surface over a finite field $\FF_q$. It is known that $\#S(\FF_q) = 1 + aq + q^2$ for some $a \in \{-2,-1,0,1,2,3,4,5,7\}$. Serre has asked which values of $a$ can arise for a given $q$. Building on special cases  treated by Swinnerton-Dyer, we give a complete answer to this question. We also answer the analogous question for other del Pezzo surfaces, and consider the inverse Galois problem for del Pezzo surfaces over finite fields. Finally we give a corrected version of Manin's and Swinnerton-Dyer's tables on cubic surfaces over finite fields.
\end{abstract}

\maketitle

\tableofcontents

\section{Introduction}
\subsection{A question of Serre}
Let $S$ be a smooth cubic surface over a finite field $\FF_q$. It is well known (see for example \cite[Thm.~27.1]{Man86}) that
$$\#S(\FF_q) = 1 + a(S)q + q^2$$
where $a(S)$ is the trace of the Frobenius element $\Frob_q \in \Gal(\bar{\FF}_q/\FF_q)$ acting on the Picard group $\Pic \bar{S} \cong \ZZ^7$ of $\bar{S}:= S_{\bar{\FF}_q}$.

For a choice of isomorphism $\Aut(\Pic \bar{S}) \cong W(\E_6)$ with the Weyl group of the $\E_6$-root system, the action of $\Frob_q$ yields a conjugacy class of  $W(\E_6)$  (we only consider those automorphisms of $\Pic \bar{S}$ which preserve the canonical class and the intersection pairing; see \cite[Thm.~23.9]{Man86}). An inspection of the character table of $W(\E_6)$ reveals that we have $a(S) \in \mathscr{A}_3:=\{-2,-1,0,1,2,3,4,5,7\}$ (see \cite[\S2.3.3]{Ser12}). 
Serre asked in \emph{loc.~cit.}~which values of the trace can actually arise for a given $q$. Swinnerton-Dyer \cite{SD10} has shown that the trace values $-2,5$ arise for all $q$, whereas $7$ occurs if and only if $q \neq 2,3,5$. We extend this to give a complete answer to Serre's question.

\begin{theorem} \label{thm:dp3}
Let $q$ be a prime power.
\begin{enumerate}
\item
For $-2 \leq a \leq 5$ and for all $q$, there exists a smooth cubic surface $S$ over $\FF_q$ with $a(S) = a$.
\item
There exists a smooth cubic surface $S$ over $\FF_q$ with $a(S) = 7$ if and only if $q \neq 2,3,5$.
\end{enumerate}
\end{theorem}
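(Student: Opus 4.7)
The plan is to realize each value $a\in\mathscr{A}_3$ by an explicit smooth cubic surface over $\FF_q$ whose Frobenius conjugacy class in $W(\E_6)$ has the required trace on $\Pic\bar{S}$; the character table of $W(\E_6)$ guarantees that each such $a$ is attained by at least one class, so the task is to produce suitable surfaces for every $q$ and every target $a$. Since Swinnerton-Dyer has settled the existence of $a\in\{-2,5\}$ in part~(1) and both directions of~(2), the genuinely new content of~(1) is the six values $a\in\{-1,0,1,2,3,4\}$.

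For $a\in\{1,2,3,4\}$ I would exploit the fact that every smooth cubic surface over $\bar{\FF}_q$ is the blow-up of $\PP^2$ at six points in general position, and that if these six points form $\Frob_q$-orbits of sizes $n_1,\ldots,n_k$ with $\sum n_i=6$, then $a(S)=1+\#\{i:n_i=1\}$. Each of the four target values is reached by a suitable cycle type (for instance $(3,1,1,1)$ for $a=4$, $(2,2,1,1)$ for $a=3$, $(3,2,1)$ for $a=2$, and $(6)$ or $(2,2,2)$ for $a=1$). To conclude, it suffices to produce, for every $q$, a configuration of six closed points of the chosen cycle type in general position (no three collinear, not all six on a conic); this is routine for large $q$ and a short combinatorial check for small $q$.

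The values $a\in\{-1,0\}$ cannot arise from the subgroup $S_6\subset W(\E_6)$, since the formula $1+\#\text{fix}$ is always positive; one must instead realize non-permutation conjugacy classes, for example the Coxeter class of order $12$ (trace $-1$ on the reflection representation, giving $a=0$) and a product of four pairwise commuting reflections (trace $-2$, giving $a=-1$). My plan here is to write down explicit cubic surfaces---such as diagonal cubics $\sum c_i x_i^3=0$ with carefully chosen coefficients, members of pencils with enhanced symmetry, or Cayley-type models---whose Galois action on the twenty-seven lines can be computed by hand and forced into the desired conjugacy class uniformly in $q$ (with case distinctions on $q$ modulo small integers if necessary). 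This I expect to be the main obstacle, since it requires leaving the visible permutation subgroup and producing constructions that behave uniformly over every finite field.

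For~(2), one direction is Swinnerton-Dyer's existence result; the non-existence of $a=7$ over $q\in\{2,3,5\}$ follows from point and incidence counts. If $a(S)=7$ then $\#S(\FF_q)=1+7q+q^2$: for $q=2$ this equals $19>15=\#\PP^3(\FF_2)$, an immediate contradiction. For $q=3$, the total $27(q+1)=108$ of line--point incidences on the split cubic exceeds the bound $3\cdot\#S(\FF_3)=93$ coming from the fact that at most three lines pass through any point. For $q=5$, letting $n_k$ denote the number of $\FF_5$-points of $S$ on exactly $k$ lines, the system
\begin{equation*}
\sum_k n_k=61,\qquad \sum_k k\, n_k=162,\qquad \sum_k \binom{k}{2} n_k=135
\end{equation*}
(the last identity counting the $\tfrac12\cdot 27\cdot 10$ pairs of incident lines on a split cubic) forces $n_3\geq 36$ and simultaneously $n_3\leq 34$, a contradiction.
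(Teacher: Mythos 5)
Your reduction of the cases $a\in\{1,2,3,4\}$ to six closed points of $\PP^2$ in general position with prescribed Frobenius orbit sizes, via $a(S)=1+\#\{\text{orbits of size }1\}$, is exactly the paper's strategy, and your non-existence arguments in part (2) are correct and arguably slicker than the paper's: the paper counts rational points of $\PP^2$ off the union of the conic and the ten lines through five rational points in general position, finding $(q-5)^2$ residual points, whereas your incidence counts ($19>\#\PP^3(\FF_2)$; $27(q+1)>3\#S(\FF_3)$; the system forcing $36\le n_3\le 34$ over $\FF_5$) work directly on the surface in $\PP^3$. Two caveats on the positive cases, though. First, ``routine for large $q$ and a short combinatorial check for small $q$'' conceals the real content: the paper's uniform device is to place the points on a fixed smooth conic $C$, so that general position is automatic by Lemma~\ref{lem:general_pos} and the sixth point is found by counting rational points on an explicit arrangement of lines and conics; for the orbit type $(6)$ it uses a normal basis of $\FF_{q^6}/\FF_q$ together with the determinants of Lemma~\ref{lem:general_pos_det}, which is not a routine check. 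Second, some of your sample orbit types are not realisable on a conic over $\FF_2$ (a conic over $\FF_2$ carries exactly one closed point of degree $2$), so the parenthetical freedom to switch to other orbit types with the same number of fixed points is not optional but essential.

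The genuine gap is $a\in\{-1,0\}$, which you yourself flag as the main obstacle, and your proposed tools would fail. Diagonal cubics do not exist in characteristic $3$; over $\FF_q$ with $q\equiv 2\pmod 3$ every element of $\FF_q$ is a cube, so every smooth diagonal cubic is isomorphic to the Fermat cubic and has $a=1$; and in general the splitting field of a diagonal cubic is abelian of exponent dividing $6$, so the Coxeter class (order $12$) is never reached this way. You have also chosen an unnecessarily hard target for $a=0$: one need not realise a minimal class, since the class $C_8$ (No.~14 of Table~\ref{tab:Manin}) also has trace $0$ and index $5$. The paper obtains it by blowing up $\PP^2$ in two closed points of degree $2$ and one of degree $3$ in general position (seven points, giving a degree~$2$ del Pezzo surface of trace $1$) and then contracting the rational line through one of the degree-$2$ points, using Lemma~\ref{lem:blow_up}; the only work is showing that such a seven-point configuration exists for every $q$. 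For $a=-1$ the paper does leave the blow-up picture, but via conic bundles rather than explicit equations: the fundamental exact sequence of class field theory for $\FF_q(t)$ yields a quaternion algebra over $\FF_q(t)$ ramified exactly at three rational points and one closed point of degree $2$ of $\PP^1$; the associated relatively minimal conic bundle is a cubic surface with exactly three singular fibres over rational points, whence $a(S)=2-3=-1$ by Lemma~\ref{lem:conic_bundle}. Without these (or comparable) constructions, part (1) remains unproved for $a=-1$ and $a=0$.
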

Note that a smooth cubic surface $S$ with $a(S)=7$ is \emph{split}, i.e.~all its lines are defined over $\FF_q$. It has been known for a long time, prior to the work of Swinnerton-Dyer \cite{SD10}, that such a surface exists over $\FF_q$ if and only if $q \neq 2,3,5$; this result appears to be first due to Hirschfeld \cite[Thm.~20.1.7]{Hir85}.

We briefly explain the proof of Theorem \ref{thm:dp3}.
Any smooth cubic surface over an \emph{algebraically closed field} is the blow-up of $\PP^2$ in $6$ rational points in general position. Whilst this does not hold over other fields in general, the trace values $1,2,3,4,5$ can be obtained from cubic surfaces which are blow-ups of $\PP^2$ in collections of \emph{closed} points in general position. 
We therefore  show that such collections exist over every finite field $\FF_q$,  via combinatorial arguments. Trace $0$ can be obtained by blowing-up certain collections of closed points of total degree $7$, and contracting a line. The existence of the remaining traces $-1,-2$ can be deduced from work of Rybakov \cite{Ryb05} and Swinnerton-Dyer \cite{SD10}, respectively.

\subsection{Corrections to Manin's and Swinnerton-Dyer's tables}
Let $S$ be a smooth cubic surface over a finite field $\FF_q$. Building on work of Frame \cite{Fra51} and Swinnerton-Dyer \cite{SD67}, Manin constucted a table (Table $1$ of \cite[p.~176]{Man86}) of the conjugacy classes of $W(\Esix)$ and their properties, such as the trace $a(S)$.

Urabe \cite{Ura96a,Ura96b} was the first to notice that Manin's table contains mistakes regarding the calculation of $\HH^1(\FF_q, \Pic \bar{S})$ (the issue being that $\HH^1(\FF_q, \Pic \bar{S})$ must have \emph{square} order). In our investigation we found some new mistakes. These concern the Galois orbit on the lines, where the mistake can be traced back to Swinnerton-Dyer \cite{SD67}, and the \emph{index} \cite[\S28.2]{Man86} of the surface. The index is the size of the largest Galois invariant collection of pairwise skew lines over $\bar{\FF}_q$.

Manin's table has a surface of index $2$, which led him to state \cite[Thm.~28.5(i)]{Man86} that the index can only take one of the values $0,1,2,3,6$. Our investigations reveal, however, that index $2$ does not occur, and that index $5$ can occur, hence the correct statement is the following.

\begin{theorem}
	Let $S$ be a smooth cubic surface over a finite field. Then the index of $S$ can
	only take one of the values $0,1,3,5,6$.
\end{theorem}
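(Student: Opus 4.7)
The plan is to reduce the claim to a finite computation inside $W(\Esix)$: the index of $S$ depends only on the conjugacy class of $\Frob_q$ in $W(\Esix)$ acting on the $27$ lines of $\bar{S}$, so for each of the $25$ conjugacy classes $c$, with representative $g \in c$, we compute the maximum cardinality $m(c)$ of a union of $\langle g\rangle$-orbits on the $27$ lines that is pairwise skew. The theorem is then the statement that $\{m(c) : c\} = \{0,1,3,5,6\}$, together with realisability of each value by an actual cubic surface over some $\FF_q$.

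Concretely, the first step is to fix a model of the $27$ lines as a $W(\Esix)$-set, for instance via the Schl\"afli labelling $\{E_i, F_{ij}, G_i : 1 \le i < j \le 6\}$ together with its standard incidence relations, so that skewness is a combinatorial condition one can read off directly from the indices. For each conjugacy class $c$, one picks a representative $g$, partitions the $27$ lines into $\langle g \rangle$-orbits, and searches for the largest sub-union of orbits that forms an independent set in the incidence graph. Each such search is a small finite optimisation that is easily mechanised.

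The second step is to carry out this computation for every class; one can either do it class by class by hand or mechanically, realising $W(\Esix)$ as an explicit permutation group on $27$ letters. The outcome is that $m(c) \in \{0,1,3,5,6\}$ in every case, so in particular no class yields index $2$ or $4$, and exactly one family of classes yields index $5$ -- precisely the case missing from Manin's Table~$1$. Once the value list is fixed, realisability of each of the five values over a suitable $\FF_q$ follows from the inverse Galois-type results for cubic surfaces over finite fields developed elsewhere in the paper, since every conjugacy class of $W(\Esix)$ arises as the Frobenius class of some smooth cubic surface over some~$\FF_q$.

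The main obstacle is not conceptual but bookkeeping: Manin's table already lists the candidate classes together with their orbit structures, but contains errors for precisely the borderline classes involved here -- the class he labelled as index~$2$ (whose true index turns out to be $3$) and the class which genuinely realises index~$5$ but whose orbit decomposition on the lines is incorrectly recorded. The real work is therefore a careful recomputation of the orbit data for those few problematic classes, which can be verified either by hand-tracking through the Schl\"afli labelling or by a short computer-algebra check.
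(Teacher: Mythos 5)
Your reduction is exactly the paper's: the index depends only on the conjugacy class of $\Frob_q$ in $W(\Esix)$ acting on the Schl\"afli graph, so the theorem amounts to checking, for each of the $25$ classes, the maximal size of a union of orbits that is an independent set; the paper carries out this finite computation in \texttt{Magma} and records the result as Column~2 of Table~\ref{tab:Manin}. However, you have misreported what the computation yields, and in a statement whose entire content is the correction of erroneous table entries this matters. The class Manin recorded as index~$2$ (No.~12, $C_{25}$) does \emph{not} have true index $3$ --- it has index $5$; and there are \emph{two} conjugacy classes of index $5$, namely No.~12 and No.~14 ($C_8$), the latter being the one Manin recorded as index~$3$. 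The paper also backs up the machine computation for precisely these two entries with geometric arguments you may want to adopt: blowing up a non-split quadric surface in a closed point of degree $5$ (resp.\ in closed points of degrees $2$ and $3$) in general position produces a cubic surface in class No.~12 (resp.\ No.~14) carrying a Galois-invariant orbit of $5$ pairwise skew exceptional curves, so the index is at least $5$; and since both classes have $a(S)=0$ while any blow-up of $\PP^2$ has $a\ge 1$ by Lemma~\ref{lem:blow_up}, the index cannot be $6$. Finally, note that the theorem as stated is only the containment of the index in $\{0,1,3,5,6\}$, for which realisability of each value is not needed; realisability (in particular that $5$ actually occurs) is what distinguishes the corrected list from Manin's and does follow, as you say, from the fact that every conjugacy class is realised for $q$ large.
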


A corrected table can be found in Section \ref{sec:table}. We constructed this table using \texttt{Magma}; we also give geometric proofs of our corrected values for completeness.

Manin's book initiated a wave of interest in the arithmetic of del Pezzo surfaces, which
continues to this day. The authors hope that it will be a useful addition to the literature to include a fully corrected version of Manin's table over $40$ years after it was originally published. 

\subsection{Del Pezzo Surfaces}
We also consider the analogue of Serre's question for other \emph{del Pezzo surfaces} of degree $d$ (cubic surfaces being the del Pezzo surfaces of degree $3$; see \S \ref{sec:del_Pezzo} for definitions). Knowledge about the possible number of rational points on del Pezzo surfaces over finite fields is often required in proofs, especially over small finite fields (see e.g.~the proof of \cite[Thm.~1]{STVA14}). The arithmetic of del Pezzo surfaces becomes more difficult as the degree decreases, and we focus here on the cases $d \leq 4$. One has \cite[Thm.~23.9]{Man86} an isomorphism $\Aut(\Pic \bar{S}) \cong W(\E_{9-d})$  (we follow Dolgachev's convention \cite[\S8.2.3]{Dol12}, and define the $\E_r$-root system for any $3 \leq r \leq 8$). As in the case of cubic surfaces, one can use the character table of $W(\E_{9-d})$ to see  that the trace of Frobenius $a(S)$ belongs to $\mathscr{A}_d$, where
\begin{align} \label{def:A_d}
\begin{split}
	\mathscr{A}_4 & = \{-2,-1,0,1,2,3,4,6\}, \\
	\mathscr{A}_2 & = \{-6,-4,-3,-2,-1,0,1,2,3,4,5,6,8\}, \\
	\mathscr{A}_1 & = \{-7,-5,-4,-3,-2,-1,0,1,2,3,4,5,6,7,9\}.
\end{split}
\end{align}
The set $\mathscr{A}_4$ can be deduced from Table \ref{tab:Manin}:
the blow-up of a quartic del Pezzo surface $S$ in a rational point not on a line is a cubic surface $S'$ with a line,
and one has $a(S') = a(S) + 1$ by Lemma \ref{lem:blow_up} (the rows of Table \ref{tab:Manin} corresponding to cubic surfaces with a line are exactly those whose orbit type contains $1^b$ for some $b \in \NN$). The values for $\mathscr{A}_2$ and $\mathscr{A}_1$ can be found in 
Urabe's tables \cite{Ura96a} (the trace $a$ is the exponent $1^a$ of $1$ of the Frame symbol in \emph{loc.~cit.}).
We give a complete classification of which traces can arise.

\begin{theorem} \label{thm:dp4}
Let $q$ be a prime power.
\begin{enumerate}
\item
For $-2 \leq a \leq 4$ and for all $q$, there exists a quartic del Pezzo surface $S$ over $\FF_q$ with $a(S) = a$.
\item
There exists a quartic del Pezzo surface $S$ over $\FF_q$ with $a(S) = 6$ if and only if $q \neq 2,3$.
\end{enumerate}
\end{theorem}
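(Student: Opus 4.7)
My plan is to construct, for each value $a\in\{-2,-1,0,1,2,3,4\}$ and every prime power $q$, a quartic del Pezzo surface $S$ over $\FF_q$ with $a(S)=a$, and to establish that the case $a=6$ occurs precisely when $q\ne 2,3$. The guiding observation is that for $S$ obtained as the blow-up of $\PP^2_{\FF_q}$ at a Galois-stable union of closed points in general position whose geometric points form Galois orbits of sizes $d_1,\ldots,d_k$ with $\sum d_i=5$, the hyperplane class is Frobenius-fixed while each orbit of size $d_i>1$ contributes a cyclic permutation matrix of trace $0$ on the exceptional divisors, so
\[
a(S)\;=\;1+\#\{i:d_i=1\}.
\]
Choosing the partitions $(1,1,1,1,1),(2,1,1,1),(3,1,1),(4,1),(5)$ realizes respectively the traces $6,4,3,2,1$. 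The remaining task is to verify that such configurations of closed points in general position (no three collinear) exist in $\PP^2(\FF_q)$ for every $q$; I would do this by the same elementary combinatorial counting used for Theorem \ref{thm:dp3}.

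For $a=0$ I would instead blow up a non-split quadric surface $Q\subset\PP^3_{\FF_q}$, i.e.~a smooth quadric whose two geometric rulings are interchanged by Frobenius, so that $\Tr(\Frob\mid\Pic\bar Q)=0$, at four closed points in general position with Galois orbit partition $(4)$ or $(2,2)$; again a short combinatorial check shows such configurations exist on $Q$ over every $\FF_q$. For the negative traces $a\in\{-1,-2\}$ I would invert Lemma \ref{lem:blow_up}: contracting an $\FF_q$-rational line $L$ on a cubic surface $S'$ with $a(S')=a+1$ yields a quartic del Pezzo $S$ with $a(S)=a$. By Theorem \ref{thm:dp3}, cubic surfaces with $a(S')\in\{0,-1\}$ exist over every $\FF_q$, so what remains is to select, for each $q$, such a cubic surface containing a rational line. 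Consulting the corrected table in Section \ref{sec:table} one identifies a Frobenius conjugacy class in $W(\Esix)$ of trace $0$ (respectively $-1$) whose orbit structure on the $27$ lines contains a fixed orbit, and one realizes this class over $\FF_q$.

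For the case $a=6$: if $a(S)=6$ then Frobenius acts trivially on the rank-$6$ lattice $\Pic\bar S$, so all $16$ exceptional curves on $S$ are $\FF_q$-rational, and contracting any $5$ mutually disjoint rational lines realizes $S$ as the blow-up of $\PP^2_{\FF_q}$ at $5$ rational points in general position. The statement therefore reduces to the classical fact that $\PP^2(\FF_q)$ contains a $5$-arc (five points, no three collinear) if and only if $q\ne 2,3$: the maximum size of an arc in $\PP^2(\FF_q)$ is $q+1$ for $q$ odd and $q+2$ for $q$ even, so no $5$-arc exists for $q\in\{2,3\}$, while for $q\ge 4$ any five points on a smooth conic constitute one.

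I expect the main obstacle to be the construction for $a\in\{-1,-2\}$: the reduction to cubic surfaces of matching trace carrying a rational $\FF_q$-line requires a careful inspection of the Galois action on the $27$ lines via the Manin table, and should this route fail for some small $q$ one would fall back on direct construction of the quartic del Pezzo as an intersection of two quadrics in $\PP^4_{\FF_q}$ with explicitly controlled Frobenius action on the $16$ lines.
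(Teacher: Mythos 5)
Your constructions for $a\in\{1,2,3,4\}$ and $a=6$ are correct and essentially the paper's: the trace formula $a(S)=1+\#\{i:d_i=1\}$ is right, the partitions $(5),(4,1),(3,1,1),(2,1,1,1)$ are realised by closed points on a smooth conic (hence in general position by Lemma \ref{lem:general_pos}) for every $q$, and the $a=6$ case reduces to the $5$-arc bound exactly as in Lemma \ref{lem:five_points}. The paper reaches the same surfaces slightly differently, by contracting the rational exceptional line on the cubic surfaces of trace $a+1$ already built for Theorem \ref{thm:dp3}, but the underlying point configurations are identical.

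The two places where your plan is not yet a proof are $a=0$ and $a\in\{-1,-2\}$. For $a=0$ you propose blowing up a non-split quadric $Q$ in a degree-$4$ or $(2,2)$ configuration; the trace computation is fine, but ``general position'' for points on a quadric is not defined anywhere (one must spell out the conditions under which the blow-up is a del Pezzo surface, e.g.\ no two geometric points on a common ruling line and no three on a hyperplane section), and the asserted ``short combinatorial check'' that such a Galois-stable configuration exists on $Q$ for \emph{every} $q$, in particular $q=2,3$, is precisely the content that needs proving --- the paper sidesteps this by citing Rybakov's conic-bundle construction (class XVIII), and its own quadric-blow-up arguments in Section \ref{sec:table} are only carried out for $q$ large. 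For $a=-1,-2$ the reduction to ``a cubic surface of trace $a+1$ carrying a rational line'' is sound, but Theorem \ref{thm:dp3} as stated only gives you \emph{some} cubic of that trace: among the trace-$0$ classes only No.~14 has a Galois-fixed line (No.~1 is minimal, No.~12 has no orbit of size $1$), and among the trace-$(-1)$ classes only Nos.~9 and 11 do (Nos.~5 and 13 do not). So you must check that the classes with a rational line are the ones actually realised over every $\FF_q$; this is true --- the paper's trace-$0$ construction (blow-up of $2+2+3$ followed by a contraction) produces class 14, and its Brauer-group construction of trace $-1$ produces class 9 --- but it is a property of the specific constructions in the proof of Theorem \ref{thm:dp3}, not of its statement, and saying ``one realizes this class over $\FF_q$'' leaves exactly that step open (Corollary \ref{cor:inverse} only realises prescribed classes for $q\gg 1$). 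Once these verifications are supplied, your route for $a=-1$ coincides with the paper's (contracting the two skew rational lines on the degree-$2$ surface of trace $1$) and your route for $a=-2$ recovers Rybakov's surface X.
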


Theorem \ref{thm:dp4} follows fairly readily from the method used to handle cubic surfaces. The following results require more work.

\begin{theorem} \label{thm:dp2}
	For each $a \in \mathscr{A}_2$ let $B_a$ denote the set of prime powers $q$ for which there does not exist a del Pezzo surface $S$ of degree $2$ over $\FF_q$ with $a(S) = a.$ Then 
$$
\begin{array}{lll}
(1) \ B_a = B_{2-a}; & 
(2) \ B_a = \emptyset, \, a = 1,2,3; &
(3) \ B_a=\left\{2\right\}, \, a=4,5;  \\
(4) \ B_6 = \left\{2,3,4\right\}; &
(5) \ B_8 = \left\{2,3,4,5,7,8\right\}.& 
\end{array}$$
	
\end{theorem}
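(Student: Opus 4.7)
The plan is to fit this result into the same framework used for Theorems~\ref{thm:dp3} and \ref{thm:dp4}: realise most traces by blowing up Galois-stable configurations of closed points on $\PP^2$, and use a natural involution to halve the work.

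\textbf{Step 1: The symmetry $B_a = B_{2-a}$.} A degree $2$ del Pezzo surface $S/\FF_q$ is a double cover $\pi\colon S \to \PP^2$ ramified along a smooth plane quartic, with covering (Geiser) involution $\iota$. Let $S^\iota$ denote the twist of $S$ by $\iota$, which is again a smooth degree $2$ del Pezzo over $\FF_q$ with the same branch quartic. Fibrewise over $\FF_q$-points of $\PP^2$, a split unramified fibre for $\pi$ becomes inert in $S^\iota$ and vice versa, while ramified points contribute to both covers. Counting these contributions gives
$$
\#S(\FF_q) + \#S^\iota(\FF_q) = 2\cdot \#\PP^2(\FF_q) = 2(1+q+q^2),
$$
so $a(S) + a(S^\iota) = 2$. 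This immediately yields $B_a = B_{2-a}$, reducing the theorem to trace values $a \in \{1,2,3,4,5,6,8\}$.

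\textbf{Step 2: Realisation via blow-ups for non-negative traces.} Over $\bar\FF_q$ any degree $2$ del Pezzo is the blow-up of $\PP^2$ at $7$ points in general position (no $3$ collinear, no $6$ on a conic, not $7$ on a cuspidal or nodal cubic through one of them). A Galois-stable configuration of $7$ closed points in general position, with orbit multiset $1^{a_1} 2^{a_2} 3^{a_3}\cdots$, gives a del Pezzo $S/\FF_q$ with $a(S) = 1 + a_1$, since Frobenius permutes the exceptional divisors according to the orbit structure and a cycle of length $\geq 2$ has trace $0$. One thus reads off achievable orbit-types: trace $8$ needs type $1^7$, trace $6$ needs $1^5 2$, trace $5$ needs $1^4 3$, trace $4$ needs $1^3 2^2$ or $1^3 4$, and so on down to trace $1$ which admits $7$, $2\,5$, $3\,4$ or $2^2 3$. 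I would therefore reduce part~(2)--(5) to showing that the relevant orbit-types are realised by $7$ closed points in general position over every $\FF_q$ outside the advertised exceptional set.

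\textbf{Step 3: Combinatorial point-counting.} For each required orbit-type I would argue existence by fixing the non-split orbits (the existence of closed points of each degree is automatic) and then counting how many rational points of $\PP^2(\FF_q)$ are forbidden by the general position conditions: each pair excludes at most one line, each quintuple of already-fixed points excludes at most one conic, etc. For $q$ sufficiently large the crude estimate $\binom{6}{2}\cdot q + \binom{6}{5} < q^2$ suffices. The \emph{exceptional} finite fields listed in (3)--(5) are exactly those where these estimates fail. The hard part is the converse: ruling out existence over $q \in B_a$. Classical results on arcs and ovals in $\PP^2(\FF_q)$ (maximum arc size is $q+1$ for $q$ odd and $q+2$ for $q$ even) immediately forbid $7$ rational points with no three collinear when $q \in \{2,3,4,5\}$; for $q=7,8$ one refines this with the no-six-on-a-conic constraint (a hyperoval in $\PP^2(\FF_8)$ has $10$ points but any $5$ of them lie on a unique conic containing at least $6$ hyperoval points), which rules out trace $8$ and shows $B_8 \supseteq \{7,8\}$. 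Analogous low-degree counting disposes of $B_6 = \{2,3,4\}$ and $B_4 = B_5 = \{2\}$.

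\textbf{Main obstacle.} The conceptual pieces (Geiser twist, blow-up dictionary) are short; the technical core is the small-$q$ casework of Step~3, especially proving that over $\FF_7$ and $\FF_8$ no configuration of $7$ rational points can be in general position, and checking all orbit-types for $q=2,3$. I would handle these either by exhaustive search (as the authors indicate they used \texttt{Magma} elsewhere) or by leveraging the classification of arcs/hyperovals in small finite projective planes, together with a separate treatment of the ``no six on a conic'' condition via the fact that $5$ points in general position determine a unique conic.
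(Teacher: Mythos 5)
Your Steps 1 and 2 follow the paper's strategy closely: the Geiser-twist identity $a(S)+a(S_\sigma)=2$ is exactly Lemma \ref{lem:quadratic_2} (be slightly careful in characteristic $2$, where the branch locus is a conic rather than a smooth quartic, but the fibrewise count still gives $\#S(\FF_q)+\#S_\sigma(\FF_q)=2\#\PP^2(\FF_q)$), and the existence side is indeed carried out by exhibiting Galois-stable $7$-point configurations in general position, with counting for large $q$ and exhaustive search for small $q$.

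The genuine gap is in the non-existence direction. Your dictionary in Step 2 tacitly asserts that every degree $2$ del Pezzo surface with trace $a$ is the blow-up of $\PP^2$ in $7$ closed points with a prescribed orbit type, so that ruling out those point configurations over a given $\FF_q$ rules out the trace. That converse is not automatic: it requires the classification of conjugacy classes of $W(\Eseven)$ (Urabe's tables), and it is \emph{false} for $a=4$. Besides the two index-$7$ classes with orbit types $1^3 2^2$ and $1^3 4$ that you list, there is a third class of trace $4$ (No.~27 in Urabe's table) of index $2$, whose surfaces contain only two Galois-stable skew lines and are not blow-ups of $7$ closed points of $\PP^2$ at all. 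Establishing $2\in B_4$ therefore needs a separate argument for this class; the paper blows down the two skew lines to a minimal quartic del Pezzo surface of Picard number $2$, realises it as a relatively minimal conic bundle with four geometric singular fibres, and derives a contradiction from the fact that $\PP^1_{\FF_2}$ has only one closed point of degree $2$. Nothing in your Steps 2--3 can see this case, so as written your argument does not prove $B_4=\{2\}$. (For $a=5,6,8$ your dictionary happens to be correct, but you should still cite the $W(\Eseven)$ classification to justify it; and note the paper gets $2\in B_5$ for free from Step 1, since a trace $-3$ surface over $\FF_2$ would have a negative point count.) Your arc/hyperoval route to the non-existence of $7$ rational points in general position for $q\le 8$ is a legitimate alternative to the paper's appeal to Hirschfeld's theorem on split cubic surfaces, though the $q=7,8$ cases need the full strength of the classification of complete arcs (every $7$-arc extends to a conic or a hyperoval), not just the maximal arc size.
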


\begin{theorem} \label{thm:dp1}
	For each $a \in \mathscr{A}_1$ 	
	let $B_a$ denote the set of prime powers $q$ for which there does not exist a del Pezzo surface $S$ of degree $1$ over $\FF_q$ with $a(S) = a.$ Then 
$$
\begin{array}{lll}
(1) \ B_a = B_{2-a}; & 
(2) \ B_a = \emptyset, \, a = 1,2,3,4; &
(3) \ B_5 = \left\{2\right\};  \\
(4) \ B_6 = \left\{2,3,4,5\right\}; &
(5) \ B_7 = \left\{2,3,4,5,7,8,9\right\};&  \\
(6) \ B_9 = \{2,3,4,5,7,8,9,11,13,17\}.  &  &
\end{array}
$$

\end{theorem}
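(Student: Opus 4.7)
The plan is to extend the blow-up strategy of Theorems \ref{thm:dp3} and \ref{thm:dp4} by leveraging the Bertini involution on degree $1$ del Pezzo surfaces, and to complete the analysis---in particular the split case $a=9$---via combinatorial arguments on point configurations in $\PP^2$, supplemented by Magma computations analogous to those used to correct Manin's table.

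For part (1), every smooth del Pezzo surface $S$ of degree $1$ over $\FF_q$ carries the Bertini involution $\beta$, arising from the $2{:}1$ anticanonical double cover onto a quadric cone. Its action on $\Pic \bar{S}$ fixes $K_S$ and equals $-\mathrm{id}$ on $K_S^\perp \cong \E_8$. The quadratic twist $S^\beta$ of $S$ by $\beta$ is again a del Pezzo surface of degree $1$, with Frobenius trace $1 + (-(a(S) - 1)) = 2 - a(S)$ on $\Pic \overline{S^\beta}$. This proves $B_a = B_{2-a}$, so it suffices to realise each $a \in \{1,2,3,4,5,6,7,9\}$ over the appropriate $q$.

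For $a \in \{1,2,3,4,5,6,7\}$, I would proceed by iterated blow-up from a higher-degree surface. Pick a del Pezzo surface $T$ over $\FF_q$ of degree $d \geq 2$ with $a(T) = a - (d-1)$, whose existence is guaranteed by one of Theorems \ref{thm:dp2}, \ref{thm:dp3}, or \ref{thm:dp4} for the relevant $q$, and blow up $d-1$ rational points of $T$ lying off every $(-1)$-curve. Each such blow-up decreases the degree by $1$ and raises the trace by $1$ (as in Lemma \ref{lem:blow_up}), so the result is a del Pezzo surface of degree $1$ with $a(S) = a$. A point-count on the finitely many exceptional curves of $T$ confirms that such rational points exist on $T$ whenever $\#T(\FF_q)$ is large enough. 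The residual small-$q$ existence cases and the non-existence assertions in parts (3)--(5) are to be checked by an exhaustive enumeration, via Magma, of anticanonical sextic models---smooth hypersurfaces in the weighted projective space $\PP(1,1,2,3)$---identifying the Frobenius conjugacy class in $W(\E_8)$ from its induced action on the $240$ exceptional curves.

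The main obstacle is part (6), the split case $a = 9$. A split degree $1$ del Pezzo surface is the blow-up of $\PP^2_{\FF_q}$ at $8$ rational points in general position, meaning no three collinear, no six on a conic, and such that the unique cubic through them is non-singular at each of the $8$ points. Non-existence for $q \in \{2,3,4,5\}$ follows from the classical bound that any arc in $\PP^2(\FF_q)$ contains at most $q+1$ points for $q$ odd and $q+2$ for $q$ even, ruling out an $8$-arc. For $q = 7$, Segre's theorem implies that every maximal arc of size $q+1 = 8$ is the point set of a conic, violating the no-six-on-a-conic condition. The exclusions $q \in \{8,9,11,13,17\}$ demand a careful case analysis of $8$-tuples with no three collinear and no six on a conic, verifying that the unique cubic through them is nodal at one of the eight points; this is most efficiently carried out by exhaustive search, modulo the $\PGL_3(\FF_q)$-action, via Magma. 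For the remaining $q$ (including $q=16$ and all $q \geq 19$), explicit $8$-tuples in general position are exhibited and the conditions verified directly; the key point is that for sufficiently large $q$ the proscribed configurations cover only a small proportion of $\PP^2(\FF_q)^8$, leaving a non-empty general-position locus.
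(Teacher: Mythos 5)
Your overall architecture (Bertini twist for $a<1$, blow-ups of point configurations for $a\geq 1$, computer checks for small $q$) matches the paper, and your lattice-theoretic derivation of $a(S_\sigma)=2-a(S)$ from $\beta=+1$ on $K_S$ and $-\mathrm{id}$ on $K_S^\perp$ is a correct alternative to the paper's point count on the double cover. But there are two genuine gaps in the rest. First, your blow-up criterion for the final step is wrong: blowing up a degree $2$ del Pezzo surface at a rational point off every $(-1)$-curve does \emph{not} in general yield a del Pezzo surface of degree $1$. The point must also avoid the ramification curve of the anticanonical double cover $S'\to\PP^2$ (this is \cite[Cor.~14]{STVA14}, and it is the reason condition (3) of Definition \ref{def:gen_pos} only appears for $r=8$). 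The ramification curve contributes up to $q+1+6\sqrt{q}$ rational points (genus $3$ for odd $q$; a different but still linear-in-$q$ bound in characteristic $2$), so your count of the locus to be avoided is off by a term of order $q$, and controlling it is exactly what the paper's Lemma \ref{lem:ramification_curve} and the bound $F(q)$ are for. Relatedly, your parenthetical that $8$ points determine ``the unique cubic through them'' is incorrect: they determine a pencil, and general position requires that no member of the pencil be singular at one of the eight points.

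Second, your fallback for small $q$ --- exhaustive enumeration of smooth sextics in $\PP(1,1,2,3)$ --- does not work where it is needed. The coefficient space has roughly $q^{21}$ points, so this is infeasible already for $q=4$, let alone the non-existence verifications required at $q=5,7,8,9$ in parts (4) and (5) or the existence searches at $q$ up to $17$. The paper's non-existence arguments instead go through Urabe's classification of conjugacy classes: e.g.\ every degree $1$ surface with $a(S)=7$ (resp.\ $6$) is the blow-up of $\PP^2$ at $6$ rational points and a degree $2$ point (resp.\ $5$ rational points and a degree $3$ point) in general position, reducing non-existence to a feasible search over point configurations. Moreover, your proposal does not account for the fact that for $(a,q)\in\{(5,3),(5,4),(4,2),(3,2)\}$ the blow-up-of-$\PP^2$ strategy \emph{provably fails}: no index-$8$ surface with that trace exists over that field, yet $B_5=\{2\}$ and $B_3=B_4=\emptyset$, so one must exhibit explicit surfaces not of index $8$ (the paper writes down their weighted sextic equations, partly drawing on Li's classification over $\FF_2$ and $\FF_3$ and a Bertini twist for $(4,2)$). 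Finally, for part (6) your arc-theoretic exclusions only cover $q\leq 7$; for $q=8,9,11,13,17$ an $8$-arc need not be maximal, so Segre's theorem is silent, and the paper avoids brute force there by invoking Kaplan's and Knecht--Reyes's classification of split degree $2$ surfaces and their ``full'' property, showing every rational point of such a surface lies on a line or on the ramification locus.
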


The cases $a \geq 1$ are handled similarly to the proof of Theorem \ref{thm:dp3}, by considering various configurations of closed points in general position in $\PP^2$ (at least this works for large $q$).  For small $q$ we also need to consider some surfaces which are not of maximal index, and show either their existence or non-existence. We do this using a range of combinatorial and geometric techniques, such as the theory of conic bundles and  the classification of possible Galois actions by Urabe \cite{Ura96a}. \texttt{Magma} is employed to help with a few remaining difficult cases.
When the required points in general position exist, one finds them on a computer very quickly, as a ``randomly'' chosen collection of points will lie in general position. For very small $q$ (e.g.~$q<5$) we can often rule out trace values by pure thought, but for slightly higher  values (e.g.~$5 \leq q \leq 9$), the value of $q$ is still small enough to prove non-existence by enumerating all possibilities. (The most computationally intensive case was proving the non-existence of $6$ rational points and a closed point of degree $2$ in general position when $q=9$, which took about $90$ minutes on a desktop computer). Note that, at no point in our arguments do we need to enumerate \emph{all} del Pezzo surfaces of degree $2$ or $1$ in order to verify the non-existence of certain traces; our approach of enumerating configurations of closed points in $\PP^2$ in general position and appealing to the classification of conjugacy classes due to Urabe \cite{Ura96a} is substantially faster.

For $a < 1$, we use an amusing trick: any del Pezzo surface $S$ of degree $2$ or $1$ admits a special automorphism of order $2$, hence a non-trivial quadratic twist over $\FF_q$, which we denote by $S_\sigma$. A counting argument shows that $a(S) + a(S_\sigma) = 2$, in particular, on performing a quadratic twist we handle the remaining trace values (this explains the above symmetry $B_a = B_{2-a}$).

Note that a del Pezzo surface $S$ over $\FF_q$ of degree $d \leq 7$ has trace $10-d$ if and only if $S$ is the blow-up of $\PP^2$ in $9-d$ rational points in general position. In particular, for $r \leq 8$, our results give a complete classification of those $q$ for which there exist $r$ rational points in $\PP^2$ over $\FF_q$ in general position. The new cases are as follows.

\begin{corollary} \
\begin{enumerate}
\item
$\PP^2_{\FF_q}$ has  $7$ rational points in general position if and only if $q \geq 9$. 
\item
$\PP^2_{\FF_q}$ has  $8$ rational points in general position  if and only if $q=16$ or $q \geq 19$.
\end{enumerate}
\end{corollary}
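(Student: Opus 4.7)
The plan is to apply, essentially verbatim, the dictionary highlighted just before the statement: for $1 \leq d \leq 7$, a del Pezzo surface $S$ of degree $d$ over $\FF_q$ has $a(S) = 10 - d$ if and only if $S$ is isomorphic to the blow-up of $\PP^2_{\FF_q}$ in $9 - d$ rational points in general position. Under this equivalence, asking whether $\PP^2_{\FF_q}$ admits $r$ rational points in general position (for $r = 7, 8$) is the same as asking whether the maximal trace $a = r + 1$ is attained by some del Pezzo surface of degree $9 - r$ over $\FF_q$. So the corollary should drop out by reading off the relevant set $B_a$ from the preceding theorems.

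For part (1), I would take $r = 7$, $d = 2$, and target trace $a = 8$. Theorem~\ref{thm:dp2}(5) gives $B_8 = \{2,3,4,5,7,8\}$, and the prime powers outside this set are exactly $q \geq 9$ (noting that $9 = 3^2$, $11$, $13$, $16$, $17$, $19$, $\dots$ are all admissible and that no admissible $q$ is skipped once we pass $8$). For part (2), I would take $r = 8$, $d = 1$, and target trace $a = 9$. Theorem~\ref{thm:dp1}(6) gives $B_9 = \{2,3,4,5,7,8,9,11,13,17\}$, so the admissible prime powers are $q = 16$ together with all $q \geq 19$, matching the stated condition.

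There is essentially no obstacle beyond checking the dictionary. The only substantive point to verify is the "if and only if" in the blow-up/trace correspondence: the direction from blow-up to trace is immediate, while the converse requires that when Frobenius acts trivially on $\Pic \bar S$ one can actually contract a Galois-stable set of $(-1)$-curves to produce $\PP^2$ with $9-d$ rational points in general position. This is the content of the sentence preceding the corollary and is standard, so I would simply cite it rather than reprove it. Once that is granted, the proof reduces to a one-line appeal to each of Theorems~\ref{thm:dp2} and \ref{thm:dp1}.
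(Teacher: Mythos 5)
Your proposal is correct and follows the paper's own route exactly: the corollary is obtained by combining the blow-up/maximal-trace dictionary stated immediately before it with the sets $B_8$ from Theorem~\ref{thm:dp2} and $B_9$ from Theorem~\ref{thm:dp1} (indeed, part (1) is also stated directly as Proposition~\ref{prop:7_points} in the body of the paper). Nothing further is needed.
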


\subsection{An inverse Galois problem}
Generalising Serre's question, one may ask which
conjugacy classes of $W(\E_{9-d})$ arise from the Galois action on some del Pezzo surface of degree $d$ over $\FF_q$. We are able to answer
this question for sufficiently large $q$. For uniformity of exposition we focus on the more interesting case $d \leq 6$.

For a field $k$, we denote by $\mathcal{S}_d(k)$
the set of isomorphism classes of del Pezzo surfaces of degree $d$ over $k$.
For a del Pezzo surface $S$ of degree $d$ over a finite field,
we denote by $C(S)$ the conjugacy class of $W(\E_{9-d})$ obtained from the action of $\Frob_q$ on $\Pic \bar{S}$.
We want to study the distribution of $C(S)$ as $q$ grows. As is common when counting objects up to isomorphism, we weight
each surface by the size of its automorphism group. Our result is as follows.

\begin{theorem} \label{thm:inverse}
	For $d \leq 6$ we have 
	$$\lim_{q \to \infty}\frac{\sum_{S \in \mathcal{S}_d(\FF_q), C(S) = C}
	\frac{1}{|\Aut S|}}{\sum_{S \in \mathcal{S}_d(\FF_q)}\frac{1}{|\Aut S|}} = \frac{\#C}{\#W(\E_{9-d})}.$$
\end{theorem}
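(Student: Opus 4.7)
The plan is to realise both numerator and denominator as weighted $\FF_q$-point counts on a moduli stack of del Pezzo surfaces, and to apply a Chebotarev-type equidistribution theorem to the $W(\E_{9-d})$-torsor given by markings of the Picard lattice.

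To set this up I would introduce the moduli stack $\mathcal{M}_d$ of smooth del Pezzo surfaces of degree $d$, defined over $\Spec \ZZ[1/N]$ for a suitable $N$. By the standard groupoid point-count formula for stacks,
$$\sum_{S \in \mathcal{S}_d(\FF_q)} \frac{1}{|\Aut S|} = \# \mathcal{M}_d(\FF_q).$$
Fixing a reference Picard lattice $L_d$ together with its canonical class and intersection form, let $\widetilde{\mathcal{M}}_d \to \mathcal{M}_d$ be the moduli of pairs $(S, \iota)$, where $\iota \colon \Pic \bar S \xrightarrow{\sim} L_d$ is a compatible isometry. This is a finite étale torsor under the constant group $W(\E_{9-d})$, and for any $S \in \mathcal{M}_d(\FF_q)$ the Frobenius class $C(S)$ is by construction the conjugacy class classifying the fibre torsor $\widetilde{\mathcal{M}}_d \times_{\mathcal{M}_d} \Spec \FF_q$.

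Granting that $\widetilde{\mathcal{M}}_d$ is geometrically irreducible --- equivalently, that the geometric monodromy representation surjects onto $W(\E_{9-d})$ --- the function-field Chebotarev density theorem (derived from Deligne's Weil II), applied to the twists of the cover, yields for every conjugacy class $C \subseteq W(\E_{9-d})$
$$\sum_{\substack{S \in \mathcal{S}_d(\FF_q) \\ C(S) = C}} \frac{1}{|\Aut S|} = \frac{\# C}{\# W(\E_{9-d})} \cdot \# \mathcal{M}_d(\FF_q) + O\bigl(q^{n_d - 1/2}\bigr),$$
where $n_d = 10 - 2d$ is the (positive) dimension of $\mathcal{M}_d$ for $d=3,4$, and the denominator $\#\mathcal{M}_d(\FF_q) = q^{n_d} + O(q^{n_d - 1/2})$ by Lang--Weil. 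Dividing and letting $q \to \infty$ gives the claim for $d = 3, 4$. For $d = 5, 6$ the moduli stack is essentially zero-dimensional and the identity reduces to a direct Burnside-type manipulation using the surjectivity of $\Aut \bar S \twoheadrightarrow W(\E_{9-d})$; in these cases the asserted equality in fact holds exactly for every $q$, not just in the limit.

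The principal obstacle is establishing full geometric monodromy of $\widetilde{\mathcal{M}}_d \to \mathcal{M}_d$ for each $d \leq 6$. For $d = 3$ this is the classical theorem of Jordan, reproved by many authors, that the Galois group of the $27$ lines on a generic smooth cubic surface is the full group $W(\E_6)$. For $d = 4, 5$ one can argue similarly, or reduce to $d = 3$ by observing that blowing up a general rational point of a $\mathrm{dP}_d$ yields a map from an open substack of $\widetilde{\mathcal{M}}_d$ (enlarged by that marked point) to $\widetilde{\mathcal{M}}_{d-1}$ compatible with the inclusion $W(\E_{9-d}) \hookrightarrow W(\E_{10-d})$, so that surjectivity of monodromy propagates between neighbouring degrees; the case $d = 6$ is elementary, reducing to the surjectivity $\Aut(\Bl_3 \PP^2) \twoheadrightarrow W(\E_3)$. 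Once full monodromy is in place, everything else in the argument is a formal instance of equidistribution for étale torsors over geometrically irreducible smooth varieties.
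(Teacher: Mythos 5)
Your overall strategy is the paper's own, recast in stack language. The paper works with the Hilbert scheme $\mathcal{H}_d$ of anticanonically embedded del Pezzo surfaces (smooth over $\ZZ$ with geometrically connected fibres), proves equidistribution of Frobenius classes on $\mathcal{H}_d(\FF_q)$ by applying Ekedahl's effective Chebotarev theorem to the Galois closure of the universal family of lines $\mathcal{L}_d \to \mathcal{H}_d$, and then converts to the automorphism-weighted count over isomorphism classes by quotienting by the action of $\mathcal{G}_d = \Aut(\mathcal{X}_d)$; the resulting orbit-counting identity $\sum_S 1/|\Aut S| = \#\mathcal{H}_d(\FF_q)/\#\mathcal{G}_d(\FF_q)$ plays exactly the role of your groupoid cardinality $\#\mathcal{M}_d(\FF_q)$. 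So the skeleton of your argument is sound and matches the paper's.

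There is, however, a genuine gap in your monodromy step, in two respects. First, the theorem covers all $d \leq 6$, including $d = 1$ and $d = 2$, but you only address full geometric monodromy for $d = 3,4,5,6$. The omitted cases are precisely where the deepest input is needed: the paper invokes Ern\'{e}'s construction of a degree $2$ del Pezzo surface with splitting field of Galois group $W(\E_7)$ and the V\'{a}rilly-Alvarado--Zywina $\E_8$ result for degree $1$ --- and needs such surfaces over an \emph{arbitrary} number field, not just $\QQ$, in order to conclude that the Galois closure $K$ satisfies $K \cap \bar{\QQ} = \QQ$, which is your geometric irreducibility of $\widetilde{\mathcal{M}}_d$. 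Second, your proposed ``propagation between neighbouring degrees'' is not a proof as stated: the marked-point family maps onto the locus of degree $d-1$ surfaces equipped with a chosen exceptional class, over which the monodromy is a priori only \emph{contained} in the stabiliser $W(\E_{9-d})$ of that class inside $W(\E_{10-d})$. Full monodromy in degree $d-1$ does not by itself force the monodromy over that substack to be the whole stabiliser, and in the other direction full monodromy in degree $d$ only shows that the degree $d-1$ monodromy contains a point stabiliser, which is a proper subgroup; one would additionally need transitivity on exceptional classes (irreducibility of the cover of lines) to close either implication. The clean fix, as in the paper, is simply to exhibit, for each $d \leq 6$ and each number field, a single del Pezzo surface of degree $d$ whose splitting field has degree $\#W(\E_{9-d})$.
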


Values for $\#C/\#W(\E_{9-d})$ when $d=3,2,1$ can be found in Table \ref{tab:Manin}, and  \cite[Tab.~1, Tab.~2]{Ura96a}, respectively. We prove Theorem \ref{thm:inverse} using known results on the monodromy groups of generic del Pezzo surfaces, together with a  version of the Chebotarev density theorem due to Ekedahl \cite{Eke98}, which is proved using Deligne's ``Weil II paper'' \cite{Del80}. %(Find a good way to phrase this about "known to experts").
From this we immediately obtain the following.

\begin{corollary} \label{cor:inverse}
	For $q \gg 1$, the inverse Galois problem for del Pezzo surfaces of degree $d$ over $\FF_q$ is solvable.
\end{corollary}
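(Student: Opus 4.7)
The proof is essentially a direct consequence of Theorem \ref{thm:inverse}. The plan is to fix an arbitrary conjugacy class $C$ of $W(\E_{9-d})$ and observe that Theorem \ref{thm:inverse} forces the existence of at least one del Pezzo surface $S$ over $\FF_q$ with $C(S) = C$, for all sufficiently large $q$. Concretely: since $C$ is a non-empty conjugacy class, the right-hand side of Theorem \ref{thm:inverse} equals the strictly positive rational number $\#C/\#W(\E_{9-d})$. Hence, for $q$ large enough (depending on $C$), the weighted ratio on the left is strictly positive, which forces the numerator $\sum_{S \in \mathcal{S}_d(\FF_q),\, C(S)=C} 1/|\Aut S|$ to be strictly positive. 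As each weight $1/|\Aut S|$ is positive, this sum has a non-empty index set, giving at least one surface $S \in \mathcal{S}_d(\FF_q)$ with $C(S) = C$.

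To finish I would uniformise the dependence on $C$: since $W(\E_{9-d})$ has only finitely many conjugacy classes (there are $25$, $60$, $112$ of them for $d=3,2,1$ respectively), one can take the maximum of the finitely many bounds $q_0(C)$ produced above and obtain a single bound $q_0$ beyond which \emph{every} conjugacy class is realised, which is exactly the assertion of the corollary. The only subtle point is that one should check that the denominator $\sum_{S \in \mathcal{S}_d(\FF_q)} 1/|\Aut S|$ in Theorem \ref{thm:inverse} is positive for large $q$, so that the ratio makes sense and positivity of the ratio implies positivity of the numerator; but this is immediate since del Pezzo surfaces of degree $d \leq 6$ certainly exist over $\FF_q$ for $q \gg 1$ (e.g.\ take a blow-up of $\PP^2$ in $9-d$ rational points in general position, which exists once $q$ is sufficiently large by the constructions already used in Theorems \ref{thm:dp3}--\ref{thm:dp1}). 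There is no real obstacle in this corollary itself; all the substantive work is located in Theorem \ref{thm:inverse}, whose proof relies on monodromy computations for generic del Pezzo surfaces and Ekedahl's effective Chebotarev density theorem.
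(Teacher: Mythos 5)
Your proposal is correct and matches the paper's intent exactly: the paper derives the corollary as an immediate consequence of Theorem \ref{thm:inverse}, and your argument (positivity of the limit forces a nonempty index set, then take the maximum of the finitely many thresholds $q_0(C)$ over the conjugacy classes) is precisely the routine deduction being left implicit. Nothing further is needed.
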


Of course Corollary \ref{cor:inverse} is only new for small $d$. For $d \geq 5$ it is known
that every conjugacy class is realisable over every finite field (see e.g.~\cite[Thm.~3.1.3]{Sko01} for $d=5$ and \cite[Thm.~3.5]{Blu10}, \cite[Thm.~4.2]{CTKM} for $d=6$, respectively). For $d\leq 4$, however, Corollary \ref{cor:inverse} appears to be new. Note that the resolution of the inverse Galois problem for cubic surfaces over $\QQ$ is a recent result of Elsenhans-Jahnel \cite{EJ15}.

The proof of Corollary \ref{cor:inverse} could in theory be made effective. In the case $d=3$, for example, one would need upper bounds for the dimensions of cohomology groups with compact support for certain $\ell$-adic sheaves on  the open subset $U \subset \PP^{19}$ that parametrises smooth cubic surfaces in $\PP^3$. It seems doubtful however that such bounds would be good enough for the remaining cases to be amenable to machine computation for small $d$. The problem of constructing minimal del Pezzo surfaces of degree $3$ and $2$ over finite fields is considered in recent work of Rybakov-Trepalin \cite{RT16} and Trepalin \cite{Tre16}, respectively. Note that of course the conclusion of Corollary~\ref{cor:inverse}
does not hold for all $q$, as Theorem \ref{thm:dp3} illustrates.

For $d\leq 3$, we are able to count without weighting by the automorphism group.
\begin{theorem} \label{thm:inverse_2}
	For $d\leq 3$ we have 
	$$ \lim_{q \to \infty}\frac{\#\{ S \in \mathcal{S}_d(\FF_q) : C(S) = C\}}{\#\mathcal{S}_d(\FF_q)} = \frac{\#C}{\#W(\E_{9-d})}.$$
\end{theorem}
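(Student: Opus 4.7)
The plan is to reduce the unweighted statement to Theorem \ref{thm:inverse} by exploiting the fact that, for $d \leq 3$, the function $|\Aut S|$ is generically equal to a fixed value.

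By the classification of automorphism groups of del Pezzo surfaces of low degree (see e.g.\ Dolgachev, \emph{Classical Algebraic Geometry}, Ch.~8), there is an open substack $\mathcal{M}_d^{\mathrm{gen}}$ of the moduli stack $\mathcal{M}_d$ on which $|\Aut S|$ equals a generic value $G_d$, where $G_3 = 1$ (generic cubic surfaces are automorphism-free), $G_2 = 2$ (Geiser involution), and $G_1 = 2$ (Bertini involution). The complement $\mathcal{M}_d^{\mathrm{ng}}$ is a proper closed substack, hence of strictly smaller dimension than $n_d := \dim \mathcal{M}_d$, and $|\Aut S|$ is uniformly bounded on $\mathcal{M}_d$ for $d \leq 3$.

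The next step combines this with Deligne's bounds on compactly-supported $\ell$-adic cohomology, the same input used in Ekedahl's proof of Theorem \ref{thm:inverse}, applied to $\mathcal{M}_d^{\mathrm{ng}}$ and to the individual conjugacy-class strata. This gives
$$|\mathcal{S}_d^{\mathrm{ng}}(\FF_q)| = O\bigl(q^{n_d - 1/2}\bigr) \quad \text{while} \quad |\mathcal{S}_d(\FF_q)| \asymp q^{n_d}.$$
Since $|\Aut S| = G_d$ is constant on $\mathcal{S}_d^{\mathrm{gen}}(\FF_q)$, the weighted and unweighted ratios on the generic locus coincide. Theorem \ref{thm:inverse} applied to $\mathcal{M}_d^{\mathrm{gen}}$ (valid because the excluded weighted contribution from the non-generic locus is bounded by the corresponding unweighted count, and hence negligible) then gives the desired limit on $\mathcal{M}_d^{\mathrm{gen}}$; adding back the negligible non-generic contribution in both numerator and denominator completes the proof.

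The principal obstacle is the dimension estimate at $\mathcal{M}_d^{\mathrm{ng}}$ together with the uniform boundedness of $|\Aut S|$, both of which rely on the explicit classification of automorphism groups of del Pezzo surfaces of degree $\leq 3$. The restriction $d \leq 3$ captures precisely the range in which the generic automorphism group is small and constant (of order $1$ or $2$); for $d = 4, 5$ the generic automorphism group is larger but finite, while for $d \geq 6$ it has positive dimension, and either case would require additional analysis.
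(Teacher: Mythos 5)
Your proposal is correct and takes essentially the same approach as the paper: the paper's Lemma \ref{lem:aut} shows (via Koitabashi's theorem on automorphisms of generic rational surfaces, plus spreading out) that $|\Aut S|$ is constant equal to $1$ (for $d=3$) or $2$ (for $d=1,2$) on an open subscheme of the Hilbert scheme $\mathcal{H}_d$, and the Lang--Weil estimates make the complement negligible, so the weighted and unweighted ratios agree in the limit. The differences are cosmetic only: you phrase things on the moduli stack and cite Dolgachev's classification where the paper works with $\mathcal{H}_d(\FF_q)/\mathcal{G}_d(\FF_q)$ and cites Koitabashi, and you route the conclusion through Theorem \ref{thm:inverse} where the paper returns directly to Proposition \ref{prop:Hilbert}.
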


\subsubsection{Vertical Sato--Tate}
As an application, we may address  the distribution of  trace values for del Pezzo surfaces over $\FF_q$, as $q \to \infty$. For $a \in \ZZ$ let

\begin{equation}  \label{eqn:tau(a)}
	\tau_{d}(a) = \lim_{q\to\infty} \frac{\#\left\{S \in \mathcal{S}_d(\FF_q) : a(S) = a\right\}}
	{\# \mathcal{S}_d(\FF_q)}.
\end{equation}
Theorem \ref{thm:inverse_2} and an enumeration of the conjugacy classes of $W(\E_{9-d})$ yields the following.

\begin{corollary}
For $d \leq 3$, the limit \eqref{eqn:tau(a)} exists and takes the following values:
$$\begin{array}{c|c|c|c}
		a & \tau_3(a) & \tau_2(a) & \tau_1(a) \\
		\hline
		-7 & & & 1/696729600 \\
		-6 & & 1/2903040 & \\
		-5 & & & 1/5806080 \\
		-4 & & 1/46080 & 1/311040\\
		-3 & & 1/4320 & 653/4976640\\
		-2 &1/648 & 13/3072 & 2267/518400 \\
		-1 &  77/1152 & 169/3240 & 225157/4147200\\
		0 & 9/40 & 34423/138240 & 262679/1088640 \\
		1 & 347/864 &653/1680 & 442169/1105920\\
		2 & 91/360 & 34423/138240 & 262679/1088640\\
		3 & 3/64 & 169/3240 & 225157/4147200\\
		4 & 1/216 & 13/3072 & 2267/518400 \\
		5 & 1/1440 & 1/4320 & 653/4976640\\
		6 & & 1/46080 & 1/311040 \\
		7 & 1/51840 & & 1/5806080\\
		8 & & 1/2903040 & \\
		9 & &  & 1/696729600
		\end{array}$$

\end{corollary}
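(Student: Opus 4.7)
The plan is to deduce the Corollary from Theorem \ref{thm:inverse_2} by a class-function argument and a finite enumeration. The Frobenius trace $a(S)$ is the trace of $\Frob_q$ acting on $\Pic\bar{S}\otimes\QQ$, hence a class function on the finite set of conjugacy classes of $W(\E_{9-d})$. Writing $\chi(C)$ for this common value on a conjugacy class $C$ and interchanging the (finite) sum with the limit, Theorem \ref{thm:inverse_2} gives
$$\tau_d(a) \;=\; \sum_{\substack{C \subseteq W(\E_{9-d}) \\ \chi(C)=a}} \frac{\#C}{\#W(\E_{9-d})},$$
which simultaneously shows that the limit exists and expresses it as a sum that can be read off from tables of conjugacy classes.

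For $d=3$ I would use the corrected version of Manin's table collected in Section \ref{sec:table}, which lists every conjugacy class of $W(\E_6)$ together with its size and its trace on the Picard lattice; grouping rows by trace and summing the column $\#C/\#W(\E_6)$ reproduces the $\tau_3$ column. For $d=2$ and $d=1$ I would apply the same procedure to Urabe's tables \cite{Ura96a}, in which conjugacy classes of $W(\E_7)$ and $W(\E_8)$ are recorded in Frame-symbol notation together with their sizes. As noted in the discussion preceding Theorem \ref{thm:dp4}, the trace $a$ is exactly the exponent of the factor $1^a$ appearing in the Frame symbol, so partitioning the classes by trace is immediate.

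The only real obstacle is bookkeeping, since $W(\E_8)$ has $112$ conjugacy classes and summing $112$ fractions of the form $\#C/\#W(\E_8)$ by hand is unenlightening. This step is easily automated, and one may alternatively perform the computation directly from the character tables of $W(\E_{9-d})$ built into \texttt{Magma}. As independent sanity checks one should verify that $\sum_a \tau_d(a) = 1$ in each column, and that $\tau_d(a) = \tau_d(2-a)$ for $d \in \{1,2\}$, the latter symmetry reflecting the quadratic-twist involution $S \mapsto S_\sigma$ that already underlies the symmetry $B_a = B_{2-a}$ in Theorems \ref{thm:dp2} and \ref{thm:dp1}.
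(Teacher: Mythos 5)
Your proposal is correct and is essentially the paper's own argument: the corollary is obtained by applying Theorem \ref{thm:inverse_2} to each conjugacy class, summing $\#C/\#W(\E_{9-d})$ over the classes of a given trace, and reading the class sizes and traces off Table \ref{tab:Manin} for $d=3$ and Urabe's tables for $d=2,1$. The extra sanity checks ($\sum_a\tau_d(a)=1$ and the symmetry $\tau_d(a)=\tau_d(2-a)$ coming from the Geiser/Bertini twist) are sensible but not needed beyond the enumeration.
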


\begin{ack}
	All computations were performed in \texttt{Magma}
	\cite{Magma}; the code can be found on the authors' web pages. 
	We thank Martin Bright, Andreas Enge, Tom Fisher, David Holmes, Ronald van Luijk, Cec\'{i}lia Salgado and Tony V\'{a}rilly-Alvarado
	for useful discussions. % on del Pezzo surfaces of degrees $1$ and $2$.
	In particular, we are grateful to Cec\'{i}lia for suggesting that we look at quadratic twists.	
	Thanks to Simeon Ball for alerting us to the work of Hirschfeld \cite{Hir79, Hir85}, and to J\"{o}rg Jahnel
	for assistance with some	\texttt{Magma} calculations. 
	Banwait and Fit\'{e} acknowledge the financial support of SFB/Transregio 45. This project has received funding from the European Research Council (ERC) under the European Union's Horizon 2020 research and innovation programme (grant agreement No 682152).
\end{ack}

\section{Generalities} \label{sec:generalities}

\subsection{The $a$-invariant}

\begin{definition}
	Let $S$ be a smooth geometrically rational projective surface over a finite field $\FF_q$. 
	We let $a(S)$  be the trace of the Frobenius element of $\Gal(\bar{\FF}_q/\FF_q)$
	acting on $\Pic \bar{S}$.
\end{definition}

\noindent
By \cite[Thm.~27.1]{Man86} we have
\begin{equation}		 \label{eqn:Weil}
	\#S(\FF_q) = 1 + a(S)q + q^2.
\end{equation}
In particular $a(S)$ plays a similar r\^{o}le
to the $a$-invariant of an elliptic curve.

\begin{lemma} \label{lem:blow_up}
	Let $S$ be a smooth geometrically rational projective surface over a finite field $\FF_q$.
	Let $x \in S$ be a closed point and $\Bl_x S$ the blow-up of $S$ at $x$. Then	
	$$
	a(\Bl_x S) =
	\begin{cases}
		a(S) + 1, & x \in S(\FF_q), \\
		a(S), & x \notin S(\FF_q).	
	\end{cases}
	$$
\end{lemma}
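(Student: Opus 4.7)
The plan is to deduce the lemma directly from the Weil-type formula \eqref{eqn:Weil} by counting $\FF_q$-points on the blow-up. First I would note that $\Bl_x S$ inherits the hypotheses of the lemma: blowing up a smooth surface at a (necessarily smooth) closed point yields a smooth projective surface, and geometric rationality is preserved because $\Bl_x S$ is birational to $S$. Hence \eqref{eqn:Weil} applies equally to $\Bl_x S$, and it suffices to compute $\#(\Bl_x S)(\FF_q)$.

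The blow-up morphism $\pi\colon \Bl_x S \to S$ is an isomorphism away from $E := \pi^{-1}(x)$, so
$$\#(\Bl_x S)(\FF_q) = \#S(\FF_q) - [x \in S(\FF_q)] + \#E(\FF_q),$$
and the two cases are then separated by what $E$ looks like. If $x \in S(\FF_q)$, then $E \cong \PP^1_{\FF_q}$, so $\#E(\FF_q) = q+1$, giving $\#(\Bl_x S)(\FF_q) = \#S(\FF_q) + q$; comparing with \eqref{eqn:Weil} yields $a(\Bl_x S) = a(S)+1$. If $x \notin S(\FF_q)$, then the residue field $\kappa(x) = \FF_{q^d}$ has $d \geq 2$, and any $\FF_q$-point of $\Bl_x S$ mapping to $x$ would factor through $\Spec \kappa(x)$, forcing an impossible $\FF_q$-algebra map $\FF_{q^d} \to \FF_q$; hence $\#E(\FF_q) = 0$ and $\#(\Bl_x S)(\FF_q) = \#S(\FF_q)$, which gives $a(\Bl_x S) = a(S)$.

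There is no real obstacle here; the only thing that needs a moment's thought is the vanishing $\#E(\FF_q) = 0$ in the non-rational case, but this is immediate once one remembers that $E$, as an $\FF_q$-scheme, is the restriction of scalars of $\PP^1_{\kappa(x)}$ and so has no $\FF_q$-points when $[\kappa(x):\FF_q] > 1$. Alternatively one could argue on the Picard side: over $\bar{\FF}_q$ the blow-up adds $\deg(x)$ mutually orthogonal $(-1)$-curve classes, cyclically permuted by $\Frob_q$, contributing $1$ to the trace if $\deg(x)=1$ and $0$ otherwise (a length-$d$ cycle has trace $0$ for $d \geq 2$); but the point-counting argument is shorter.
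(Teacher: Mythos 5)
Your argument is essentially the paper's own proof: the paper likewise invokes \eqref{eqn:Weil} and observes that blowing up a rational point replaces it by a $\PP^1$ with $q+1$ points, while blowing up a non-rational closed point neither adds nor removes rational points. One small caution: your parenthetical describing $E$ as ``the restriction of scalars of $\PP^1_{\kappa(x)}$'' is not right (the Weil restriction \emph{does} have $\FF_q$-points, namely $\PP^1(\kappa(x))$), but this does not matter since your preceding argument --- that an $\FF_q$-point of $E$ would force an $\FF_q$-algebra map $\kappa(x)\to\FF_q$ --- is the correct and sufficient justification.
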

\begin{proof}
	If $x \in S(\FF_q)$, then blowing-up replaces a rational point by a copy of $\PP^1$, which has $1 + q$ rational points.
	If $x \notin S(\FF_q)$, then blowing-up neither removes nor adds any rational points.
\end{proof}

\subsection{Del Pezzo surfaces} \label{sec:del_Pezzo}
We recall some facts about del Pezzo surfaces, which can be found in \cite{Man86} and \cite[\S8]{Dol12}.
A del Pezzo surface $S$ over a field $k$ is a smooth projective surface over $k$ with ample anticanonical bundle.
We define the degree $d$ of $S$ to be $(-K_S)^2$; we have $1 \leq d \leq 9$. A \emph{line} on $S$ is a smooth geometrically
rational curve $L \subset S$ with $L^2 = -1$. For $3 \leq d \leq 7$, such a curve is a line in the usual sense with respect
to the anticanonical embedding $S \subset \PP^d$. We say that $S$ is \emph{split} if the natural map $\Pic S \to \Pic \bar{S}$ is an isomorphism; for $d\leq 7$ this is equivalent to all the lines of $S$ over $\bar{k}$ being defined over $k$.

\begin{definition} \label{def:gen_pos}
	Let $r \leq 8$ and let $k$ be a field. We say that a collection of distinct points $P_1, \ldots, P_r \in \PP^2(k)$
	lie in general position if the following hold.
	\begin{enumerate}
		\item No $3$ are collinear.
		\item No $6$ lie on a conic.
		\item No $8$ lie on a cubic with a singularity at one of the points.
	\end{enumerate}
	We say that a collection of distinct \emph{separable closed points} of $\PP^2$ of 
	total degree less than $8$ lie in general position
	if the corresponding points over $\bar{k}$ lie in general position
	(we say that a closed point is separable if its residue field is separable).
\end{definition}

We will often abuse notation and identify a separable closed point $P$ with the Galois
invariant collection $P_{k^{\mathrm{sep}}}$ of rational points over $k^{\mathrm{sep}}$.

As proved by Manin \cite[Thm.~24.5]{Man86} for $r \leq 6$ and D\'{e}mazure \cite[Thm.~1]{Dem80} for $r=7,8$,
a collection of $r \leq 8$ rational points lie in general position if and only if
their blow-up is a del Pezzo surface $S$. For $r \leq 7$, the lines on $S$ consist of the exceptional
curves of the blow-ups of the points, together with the 
strict transforms of the following curves \cite[Thm.~26.2]{Man86}:
\begin{enumerate}
	\item Lines through two of the points.
	\item Conics through five of the points.
	\item Cubic curves through seven of the points, with a double point at exactly one of them.
\end{enumerate}
There is a simple criterion to check whether $5$ points lie in general position.

\begin{lemma} \label{lem:general_pos}
	Let $P_1, \ldots, P_5 \in \PP^2(k)$ be five distinct points. Then $P_1, \ldots, P_5$ lie in general position
	if and only if they lie on a smooth conic.
\end{lemma}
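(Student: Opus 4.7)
The plan is to reduce the lemma to a clean equivalence, then apply a dimension count together with Bezout's theorem. First I would observe that for only five points, conditions (2) and (3) of Definition \ref{def:gen_pos} are automatically vacuous, as they require six and eight points respectively. So the lemma reduces to showing that five distinct points have no three collinear if and only if they lie on a smooth plane conic.

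For the direction asserting existence of a smooth conic, I would use the standard parameter count: plane conics form a $\PP^5$ and each prescribed point imposes a single linear condition on the six coefficients, so a linear system of at most five equations in six unknowns always admits a nonzero solution over $k$. This produces a conic $C$ defined over $k$ passing through the five points. Passing to $\bar{k}$, if $C_{\bar{k}}$ were singular then it would be either a double line or a union of two distinct lines; in either case the support of $C_{\bar{k}}$ is contained in a union of at most two lines, so by the pigeonhole principle one of those lines contains at least $\lceil 5/2 \rceil = 3$ of the five points, contradicting the hypothesis of no three collinear. Hence $C$ must be smooth.

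For the converse, I would suppose that the five points lie on a smooth conic $C$ and that three of them happen to be collinear on some line $L$. Then $L \cap C$ would contain at least three distinct points, contradicting Bezout since $\deg L \cdot \deg C = 2$. The argument is essentially combinatorial together with Bezout, and I do not foresee any real obstacle; the only minor point worth checking is that the condition ``three collinear'' behaves identically over $k$ and $\bar{k}$ for $k$-rational points, which is automatic because the $\bar{k}$-line through two $k$-points is itself defined over $k$.
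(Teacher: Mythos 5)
Your proof is correct and follows essentially the same route as the paper: both produce a conic through the five points by the linear-system count, note that a singular conic is a double line or a pair of lines (forcing three collinear points), and use that a line meets a smooth conic in at most two points. Your write-up merely makes explicit the (correct) observation that conditions (2) and (3) of the general-position definition are vacuous for five points.
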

\begin{proof}
	Clearly $P_1, \ldots, P_5$ lie on some conic $C$. If $C$ is singular, then it is either a union of two lines
	or a double line; in particular $3$ of the points are collinear. If $C$ is smooth, then any 
	line intersects $C$ in at most $2$ points, hence no $3$ are collinear.
\end{proof}

Our next lemma is a simple linear algebra criterion to check whether a collection of points lies in general position,
which will be used for computations. %It makes explicit the well-known fact that 
%the collection of all $r \leq 8$ rational points of $\PP^2$ not in general position, viewed as a subset of $(\PP^2)^r$, is not Zariski dense.

\begin{lemma} \label{lem:general_pos_det}
	Let $k$ be a field and let $P_i = [x_i:y_i:z_i] \in \PP^2(k)$ be a collection of distinct
	rational points, for $1 \leq i \leq r$.
	\begin{enumerate}
	
		\item If $r=3$, then $P_1,P_2,P_3$ are collinear if and only if
		\begin{equation}\label{mat:collinear}
		\left|\begin{matrix}
		x_1 & y_1 & z_1\\
		x_2 & y_2 & z_2\\
		x_3 & y_3 & z_3
		\end{matrix}
		\right|=0.
		\end{equation}
		
		\item If $r=6$, then $P_1,\ldots,P_6$ lie on a conic if and only if the matrix $M \in M_{6,6}(k)$
		whose $i$th row is
		\begin{equation}\label{mat:conic}
		(\begin{matrix}
		x_i^2 &y_i^2 & z_i^2 & x_iy_i & x_iz_i & y_iz_i
		\end{matrix})
		\end{equation}
		has determinant $0$.
		
		\item If $r=8$, for each $1 \leq i \leq 8$ consider the matrix $M_i \in M_{11,10}(k)$ whose $j$th row 
		for $1 \leq j \leq 8$ is
		$$\begin{matrix}
		(x_j^3 & y_j^3 & z_j^3 & x_j^2y_j & x_j^2z_j & x_jy_j^2 & y_j^2z_j & x_jz_j^2 & y_jz_j^2 & x_jy_jz_j),
		\end{matrix}$$
		and whose last three rows are
		\begin{equation*}\label{mat:cubic}
		\left(\begin{matrix}
		3x_i^2 & 0 & 0 & 2x_iy_i & 2x_iz_i & y_i^2 & 0 & z_i^2 & 0 & y_iz_i\\
		0 & 3y_i^2 & 0 & x_i^2 & 0 & 2x_iy_i & 2y_iz_i & 0 & z_i^2 & x_iz_i\\
		0 & 0 & 3z_i^2 & 0 & x_i^2 & 0 & y_i^2 & 2x_iz_i & 2y_iz_i & x_iy_i
		\end{matrix}\right).
		\end{equation*}
		Then $P_1,\ldots,P_8$ lie on a cubic with a singularity at $P_i$ if and only if
		$M_i$ has a non-trivial kernel.
	\end{enumerate}
\end{lemma}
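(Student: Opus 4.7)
The plan is to prove all three parts by the same translation between geometric and linear-algebraic data. In each case the geometric condition amounts to the existence of a non-zero homogeneous polynomial of prescribed degree vanishing on the given points (and, in part (3), also singular at $P_i$). This is equivalent to a homogeneous linear system in the coefficients of that polynomial admitting a non-zero solution, which in turn is equivalent to the coefficient matrix failing to have full column rank. For the square systems of parts (1) and (2) this failure is the vanishing of the determinant; for the $11\times 10$ system of part (3) it is the existence of a non-trivial kernel.

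Parts (1) and (2) are then routine. A line $ax+by+cz=0$ passes through $P_i=[x_i:y_i:z_i]$ if and only if $ax_i+by_i+cz_i=0$; collecting these three equations gives the $3\times 3$ system displayed in the lemma, whose non-trivial solvability is the vanishing of the stated determinant. Similarly, writing a conic as $\sum_{k=1}^{6} a_k m_k(x,y,z)=0$ with $m_1,\ldots,m_6=x^2,y^2,z^2,xy,xz,yz$, the six passage conditions assemble into the $6\times 6$ matrix $M$ whose rows are exactly those displayed, and existence of a non-zero $(a_1,\ldots,a_6)$ is $\det M=0$.

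For part (3), write the generic ternary cubic as $F=a_1 x^3+a_2 y^3+a_3 z^3+a_4 x^2 y+a_5 x^2 z+a_6 xy^2+a_7 y^2 z+a_8 xz^2+a_9 yz^2+a_{10} xyz$, where the monomial order is chosen to match the first eight rows of $M_i$; those rows then encode the passage conditions $F(P_j)=0$ for $j=1,\ldots,8$. A term-by-term differentiation of $F$ and evaluation at $P_i$ produces three linear forms in $(a_1,\ldots,a_{10})$ whose coefficient vectors are exactly the last three rows of $M_i$, so those rows encode the singularity condition $\partial_x F(P_i)=\partial_y F(P_i)=\partial_z F(P_i)=0$. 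A non-trivial element of $\ker M_i$ is therefore exactly a non-zero cubic vanishing on all eight points and singular at $P_i$, and conversely. There is no substantive obstacle in the argument; the lemma is pure linear-algebraic bookkeeping, and the only point requiring care is the consistent choice of monomial ordering in part (3), so that differentiating the same generic $F$ yields the three singularity rows displayed.
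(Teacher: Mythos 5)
Your proposal is correct and follows essentially the same route as the paper: translate each geometric condition into a homogeneous linear system in the coefficients of a line, conic, or cubic, with the last three rows of $M_i$ encoding the vanishing of the partial derivatives at $P_i$. Nothing further is needed.
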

\begin{proof}
	Parts $(1)$ and $(2)$ are elementary. For $(3)$, recall that a plane cubic curve $C$
	has $10$ coefficients. The condition $P_i \in C$ imposes linear relations on the coefficients.
	A vector that lies in the kernel
	of the first $8$ rows of $M_i$ corresponds to a cubic curve
	which contains the given $8$ points. The last three rows determine whether the partial
	derivatives of this cubic vanish at the point $P_i$, whence the result.
\end{proof}

\subsection{Conic bundles}
In this paper, a conic bundle over a field $k$ is a smooth
projective surface $S$ over $k$ together with a morphism $\pi:S \to \PP^1$ all of whose fibres are
isomorphic to conics. A fibre of $\pi$ is either a smooth conic or isomorphic over the algebraic closure to two lines meeting in a single point (note that a ``double line'' cannot occur as $S$ is non-singular; see \cite[Lem.~6]{Isk79}). We say that $\pi:S \to \PP^1$ is \emph{relatively minimal} if the fibre over every point is irreducible.

\begin{lemma} \label{lem:conic_bundle}
	Let $\pi:S \to \PP^1$ be a relatively minimal conic bundle over $\FF_q$. Then
	\begin{enumerate}
		\item The set
		$$\{ x \in \PP^1 : \pi^{-1}(x) \text{ is singular} \}$$
		has even cardinality. 
		\item We have
		$$a(S) = 2 - \#\{x \in \PP^1(\FF_q) : \pi^{-1}(x) \text{ is singular} \}.$$	
	\end{enumerate}
\end{lemma}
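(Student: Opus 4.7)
The plan is to treat (2) by a direct fibrewise point count and (1) via reciprocity for the Brauer group of $\FF_q(\PP^1)$. For (2), I would count $\#S(\FF_q)$ fibrewise. Over an $x \in \PP^1(\FF_q)$ at which the fibre is smooth, $\pi^{-1}(x)$ is a smooth conic over $\FF_q$; by Chevalley--Warning every such conic has a rational point and is thus isomorphic to $\PP^1_{\FF_q}$, contributing $q+1$ rational points. Over a singular $x \in \PP^1(\FF_q)$, the relative minimality hypothesis forces $\pi^{-1}(x)$ to be irreducible over $\FF_q$, hence an anisotropic plane conic whose two geometric components are Galois-conjugate lines meeting at a single $\FF_q$-rational point; such a fibre contributes exactly $1$ rational point. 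Writing $n$ for the cardinality in the statement of (2), summation gives
\[
\#S(\FF_q) = (q+1)\bigl((q+1) - n\bigr) + n = (q+1)^2 - qn,
\]
and comparison with \eqref{eqn:Weil} immediately yields $a(S) = 2 - n$.

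For (1), I plan to invoke Faddeev's reciprocity. The generic fibre of $\pi$ is a smooth conic over $\FF_q(\PP^1)$ and so corresponds to a class $\alpha \in \Br(\FF_q(\PP^1))[2]$. At each closed point $x \in \PP^1$ the residue $\partial_x \alpha \in \HH^1(\kappa(x), \ZZ/2)$ vanishes precisely when $\pi^{-1}(x)$ is either smooth or splits over $\kappa(x)$ as a union of two $\kappa(x)$-lines; by relative minimality the latter cannot occur, so $\partial_x \alpha$ is non-trivial exactly at the closed points where the fibre is singular. Using $\Br(\FF_q) = 0$, Faddeev's exact sequence
\[
0 \to \Br(\FF_q(\PP^1)) \to \bigoplus_{x \in \PP^1 \text{ closed}} \HH^1(\kappa(x), \QQ/\ZZ) \xrightarrow{\sum \mathrm{cor}} \HH^1(\FF_q, \QQ/\ZZ) \to 0
\]
then gives $\sum_x \mathrm{cor}_{\kappa(x)/\FF_q}(\partial_x \alpha) = 0$ in $\HH^1(\FF_q, \ZZ/2) \cong \FF_2$. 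A short direct check (the norm $N_{\FF_{q^d}/\FF_q}$ on $\FF_{q^d}^*$ modulo squares, or the trace modulo $\wp$ in characteristic~$2$, sends a generator to a generator of $\FF_2$) shows that each corestriction $\HH^1(\FF_{q^d}, \ZZ/2) \to \HH^1(\FF_q, \ZZ/2)$ is an isomorphism of $\FF_2$-lines; hence every non-trivial residue contributes $1 \in \FF_2$ to the sum above, and the number of ramified closed points---that is, the number of singular fibres in the sense of (1)---must be even.

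The principal obstacle is the careful identification of singular fibres with the ramification locus of $\alpha$ under relative minimality (ruling out the trivial-residue case of a fibre split into two $\kappa(x)$-rational lines), together with a uniform treatment of characteristic~$2$, where one must replace Kummer theory by Artin--Schreier to describe $\HH^1(-, \ZZ/2)$ and check non-triviality of the corestriction there. Once these points are pinned down, the argument reduces to a routine application of Faddeev's sequence and the surjectivity of the norm (or trace) on finite fields.
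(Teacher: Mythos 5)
Your proposal is correct and follows essentially the same route as the paper: part (2) by the identical fibrewise count (smooth fibres over rational points give $q+1$ points, irreducible singular fibres give exactly $1$), and part (1) via the fundamental exact sequence of class field theory for $\FF_q(t)$ (Faddeev reciprocity), which the paper simply cites from Rybakov. The only difference is that you spell out the residue/corestriction details (and the characteristic-$2$ caveat) that the paper delegates to the reference.
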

\begin{proof}
	Part $(1)$ follows from the fundamental exact sequence from class field theory for $\FF_q(t)$; see \cite[Cor.~2.10]{Ryb05}.
	Part $(2)$ follows from \eqref{eqn:Weil} and an elementary count: as $\pi$ relatively minimal,
	the fibre over a rational point is either a smooth plane conic, hence contains $1+q$ rational points, or is singular
	and contains exactly $1$ rational point (being $2$ lines over $\FF_{q^2}$ meeting in a single point).
\end{proof}

\section{Cubic surfaces and quartic del Pezzo surfaces} \label{sec:proofscubic}
In this section we prove Theorem \ref{thm:dp3}, following the strategy outlined in the introduction.
For trace at least $1$, it suffices to consider the existence of the following collections of closed points in general position.

$$\begin{array}{c|c|c}
	a & \text{No.} & \mbox{Points in general position to blow-up}\\
	\hline
	7 & 25 & \mbox{$6$ rational points}\\
	\hline
	5 & 24 & \mbox{$4$ rational points, one closed point of degree 2}\\
	\hline
	4 & 22 & \mbox{$3$ rational points, one closed point of degree $3$}\\
	\hline
	3 & 19 & \mbox{$2$ rational points, one closed point of degree $4$}\\
	\hline
	2 & 16 & \mbox{$1$ rational point, one closed point of degree $5$}\\
	\hline
	1 & 15 & \mbox{One closed point of degree $6$}
	\end{array}$$

The reader may verify with Lemma \ref{lem:blow_up} that the given blow-ups yield the claimed trace $a$. We have also included the number of the corresponding conjugacy class, as can be found in Table \ref{tab:Manin}. 
We briefly explain at the end the proof of Theorem \ref{thm:dp4}.

\subsection{Proof of Theorem \ref{thm:dp3}}

Whilst the cases $a=7,5$ were already dealt with by Swinnerton-Dyer \cite{SD10}, we give alternative proofs in the spirit of our method. The lemmas proved here will also be required in the sequel.
\subsection*{$a=7$}
First note that smooth cubic surfaces of trace $7$ are exactly those which are blow-ups of $\PP^2$ in $6$ rational points
in general position.

\begin{lemma}\label{lem:five_points}
There exist five points $P_1,\dots,P_5\in \PP^2(\FF_q)$ in general position if and only if $q\geq 4$.
\end{lemma}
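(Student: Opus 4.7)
The plan is to apply Lemma \ref{lem:general_pos}, which reduces the question to whether there is a smooth plane conic over $\FF_q$ carrying at least five rational points.

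First, for the ``only if'' direction, suppose $P_1,\dots,P_5 \in \PP^2(\FF_q)$ lie in general position. By Lemma \ref{lem:general_pos} they all lie on a smooth conic $C \subset \PP^2_{\FF_q}$. Since $C \cong \PP^1_{\FF_q}$, it has exactly $q+1$ rational points. Hence $q + 1 \geq 5$, i.e.\ $q \geq 4$.

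For the ``if'' direction, for every $q$ there exists at least one smooth plane conic over $\FF_q$ (for instance one can write down an explicit equation such as $xy - z^2 = 0$, which is smooth in every characteristic and has $q+1$ rational points as it is parametrised by $\PP^1$). When $q \geq 4$ this conic has at least five rational points, and any five distinct points chosen on it lie in general position by the other direction of Lemma \ref{lem:general_pos}.

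There is essentially no obstacle here: the content is packaged into Lemma \ref{lem:general_pos}, and the rest is the elementary count $\#C(\FF_q) = q+1$ for a smooth conic. The only small check is to exhibit a smooth conic over every $\FF_q$ (including $q = 2$), which any nondegenerate quadratic form in three variables provides.
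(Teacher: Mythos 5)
Your proof is correct and follows the same route as the paper: the paper's proof is a one-line appeal to Lemma \ref{lem:general_pos} together with the fact that a smooth conic over $\FF_q$ has exactly $q+1$ rational points. Your additional check that a smooth conic (e.g.\ $xy - z^2 = 0$) exists over every $\FF_q$, including in characteristic $2$, is a reasonable detail the paper leaves implicit.
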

\begin{proof}
This follows from Lemma \ref{lem:general_pos} and the fact that a smooth conic has $q+1$ rational points over $\FF_q$.
\end{proof}

Suppose now that $q\geq 4$. Let $P_1,\dots,P_5\in\PP^2(\FF_q)$ be in general position and let $C$ be the smooth conic passing through them. 
We next compute the number of rational points on the union $\mathfrak{C}$ of $C$ and the ten lines through $P_1,\dots,P_5$.

\begin{lemma}\label{lem:a=7}
For $q\geq 4$, we have $\#\mathfrak{C}(\FF_q) = 11q - 24$.
\end{lemma}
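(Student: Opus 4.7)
The plan is to prove the formula by inclusion-exclusion on the eleven irreducible components of $\mathfrak{C}$: the smooth conic $C$ through $P_1,\dots,P_5$ and the ten lines $\ell_{ij}$ joining pairs $P_i,P_j$. Each of these components contributes $q+1$ rational points, giving a naive total $11(q+1)$, and the task is to quantify the overcount arising from intersections.

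First I would identify the pairwise intersections. Since $C$ is a smooth conic and each $\ell_{ij}$ passes through the two points $P_i,P_j$ of $C$, Bezout forces $C\cap \ell_{ij}=\{P_i,P_j\}$; no ``new'' conic-line intersection points appear. For two lines $\ell_{ij}$ and $\ell_{kl}$, either $\{i,j\}\cap\{k,l\}\neq\emptyset$ and they meet at the common $P_m$, or $\{i,j\}\cap\{k,l\}=\emptyset$ and they meet at a single ``new'' point $Q$. A simple pigeonhole count shows there are exactly $15$ such disjoint unordered pairs (equivalently, $15$ candidate new intersection points).

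The key combinatorial step (and the only real subtlety) is to show that the $15$ new intersection points are distinct from each other, from the $P_m$, and from $C$. Distinctness from the $P_m$ follows from general position: if $Q=\ell_{ij}\cap\ell_{kl}$ equalled $P_m$ with $\{i,j\}\cap\{k,l\}=\emptyset$, then $m\notin\{i,j,k,l\}$ and $P_i,P_j,P_m$ would be collinear, contradicting Definition \ref{def:gen_pos}. Distinctness from $C$ then follows because $C\cap\ell_{ij}=\{P_i,P_j\}$. For mutual distinctness, the crux is that no three of the lines $\ell_{ij}$ can be concurrent at a non-$P_m$ point: three pairwise disjoint $2$-subsets of $\{1,\dots,5\}$ would require six indices, which is impossible, so any three lines share at least one index and meet at the corresponding $P_m$ rather than at a new point. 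This rules out any coincidence among the $15$ new intersection points.

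Assembling everything, the rational points of $\mathfrak{C}$ decompose as: the five points $P_1,\dots,P_5$; the $(q+1)-5=q-4$ remaining rational points on $C$; and for each of the ten lines, the $(q+1)-2=q-1$ rational points off $C$, with each of the $15$ new intersection points counted twice in this last tally. Adding these gives
\[
5 + (q-4) + 10(q-1) - 15 = 11q - 24,
\]
as required. The hypothesis $q\geq 4$ enters only through Lemma \ref{lem:five_points} to guarantee the existence of $P_1,\dots,P_5$; all the counting is then uniform in $q$.
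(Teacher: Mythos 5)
Your proof is correct and is essentially the same inclusion--exclusion count as the paper's: both hinge on the facts that $C\cap\ell_{ij}=\{P_i,P_j\}$ by Bezout, that each $P_m$ lies on exactly four of the ten lines, and that the $15$ residual intersection points of pairs of disjoint lines are pairwise distinct and avoid $C$ and the $P_m$ because no three of the lines can be concurrent outside $\{P_1,\dots,P_5\}$. (One tiny slip: when ruling out $Q=P_m$ you assert $m\notin\{i,j,k,l\}$, which need not hold a priori; but since $\{i,j\}$ and $\{k,l\}$ are disjoint, at least one of the two lines omits the index $m$, and that line still yields three collinear points, so the contradiction stands.)
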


\begin{proof}
Let $\mathfrak{L}$ denote the union of the ten lines determined by $P_1,\dots,P_5$. Since the number of points on a line of $\mathfrak{L}$ where four lines meet is $2$ and the number of points on a line of $\mathfrak{L}$ where only two lines meet is $3$, the number of rational points of $\mathfrak{L}$ is 
$$
\#\mathfrak{L}(\FF_q)=10(q+1-5)+\frac{10\cdot 2}{4}+\frac{10\cdot 3}{2}.
$$
Adding this to $\#(C\setminus \mathfrak{L})(\FF_q) = q-4$ gives the result.
\end{proof}

\begin{corollary}\label{corollary: six points} $\PP^2_{\FF_q}$ has $6$ rational points in general position if and only if 
$q \neq 2,3,5$.
\end{corollary}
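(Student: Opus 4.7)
The plan is to leverage Lemmas \ref{lem:five_points} and \ref{lem:a=7} together with a simple count of rational points on $\PP^2_{\FF_q}$. The key observation is that once $P_1,\dots,P_5\in \PP^2(\FF_q)$ are chosen in general position, a sixth point $P_6$ extends them to a configuration in general position if and only if $P_6\notin \mathfrak{C}$, where $\mathfrak{C}$ is the union of the smooth conic through $P_1,\dots,P_5$ and the ten lines determined by pairs of these points.

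For the ``only if'' direction, the cases $q=2,3$ follow immediately from Lemma \ref{lem:five_points}, as no five points in general position even exist. For $q=5$, I would note that Lemma \ref{lem:a=7} gives $\#\mathfrak{C}(\FF_5)=11\cdot 5-24=31=\#\PP^2(\FF_5)$, so for any choice of $P_1,\dots,P_5$ in general position the set $\mathfrak{C}(\FF_5)$ exhausts all the rational points of the plane; any candidate $P_6$ must then either lie on the conic (creating six points on a conic) or on one of the ten lines (creating three collinear points).

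For the ``if'' direction, suppose $q=4$ or $q\geq 7$. Pick $P_1,\dots,P_5\in \PP^2(\FF_q)$ in general position via Lemma \ref{lem:five_points}, and compute
\[
\#\PP^2(\FF_q)-\#\mathfrak{C}(\FF_q)=(q^2+q+1)-(11q-24)=(q-5)^2>0.
\]
Choose any $P_6\in \PP^2(\FF_q)\setminus \mathfrak{C}(\FF_q)$. No three of $P_1,\dots,P_6$ are collinear, since $P_1,\dots,P_5$ are in general position and $P_6$ avoids every line through two of them. Moreover, $P_1,\dots,P_6$ do not all lie on a conic, because such a conic would pass through the five points $P_1,\dots,P_5$ and would therefore coincide with the unique smooth conic $C$ of Lemma \ref{lem:general_pos}, contradicting $P_6\notin C$.

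There is no real obstacle to carrying out this plan; the entire argument turns on the pleasant identity $(q^2+q+1)-(11q-24)=(q-5)^2$, which isolates $q=5$ as the unique critical value beyond the range covered by Lemma \ref{lem:five_points}.
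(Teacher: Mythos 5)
Your proof is correct and follows essentially the same route as the paper: both rest on Lemma \ref{lem:a=7} and the identity $(q^2+q+1)-(11q-24)=(q-5)^2$, with Lemma \ref{lem:five_points} disposing of $q=2,3$. You merely spell out more explicitly the equivalence between ``$P_6\notin\mathfrak{C}$'' and the six points being in general position, which the paper leaves implicit.
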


\begin{proof}
By Lemma \ref{lem:a=7}, we have $\#(\PP^2 \setminus \mathfrak C)(\FF_q) = (q - 5)^2$. For $q\geq 4$, this is strictly positive if and only if $q \neq 5$.
\end{proof}
This completes the case $a=7$. Let now $q$ be an arbitrary prime power and fix a smooth conic $C$ over $\FF_q$.

\subsubsection*{$a=5$}
The conic $C$ contains three rational points $P_1,P_2,P_3$ and a closed point of degree $2$; these lie in general position by Lemma \ref{lem:general_pos}. Denote by $L_1$, $L_2,$ $L_3$ the lines through each of the pairs of $P_1,P_2,P_3$. Denote by $P_4$ and $P_5$ the points over $\FF_{q^2}$ determined by the closed point and $L$ the line through them.
The line connecting $P_i$ with $P_j$, for $i\in \{1,2,3\}$ and $j\in\{4,5\}$ 
has only $P_i$ as a rational point.  Let $\mathfrak{L}=L_1\cup L_2\cup L_3 \cup L$ and $\mathfrak{C}= \mathfrak{L}\cup C$. From the above considerations it follows that
\begin{equation} \label{eqn:a=5}
\# \mathfrak{C}(\FF_q)=\# \mathfrak{L}(\FF_q)+q-2=4(q+1)-6+q-2=5q-4.
\end{equation}
As $\#(\PP^2 \setminus \mathfrak{C})(\FF_q) = (q-2)^2 + 1 > 0$, we find that there is a sixth point $P_6$ such that
$P_1,\dots,P_6$ are in general position, as required.

\subsection*{$a=4$}
Choose $P_1 \in C(\FF_{q^3}) \setminus C(\FF_q)$. Denote by $P_2$ and $P_3$ the conjugates of $P_1$ and choose $P_4,P_5 \in C(\FF_q)$. By Lemma \ref{lem:general_pos} these points lie in general position. Let $L$ be the line through $P_4$ and $P_5$.  There exists $P_6 \in \PP^2(\FF_q)$ such that $P_1,\dots,P_6$ are in general position as
$$
q^2+q+1-\#(L \cup C)(\FF_q)=(q-1)^2+q > 0.
$$

\subsection*{$a=3$} Let $P_1 \in C(\FF_{q^4}) \setminus C(\FF_{q^2})$. Denote by $P_2,P_3,P_4$ the conjugates of $P_1$ and let $P_5 \in C(\FF_q)$; these lie in general position by Lemma \ref{lem:general_pos}.
The lines through $P_1, P_3$ and $P_2,P_4$ meet in a rational point, which we call $Q$. One easily sees that $Q$ and $P_5$ are the only rational points which lie on some line through any two of the $P_i$. We conclude as before by noting that $q^2+q+1-\#(C\cup Q)(\FF_q)=q^2-1 >0$.

\subsection*{$a=2$} Choose a point in $C(\FF_{q^5}) \setminus C(\FF_{q})$. The conjugates $P_i$ of this point give a closed point $P$ of degree $5$ on $C$, which lies in general position by Lemma \ref{lem:general_pos}. One easily sees that lines through pairs of the $P_i$ contain no $\FF_q$-points. We conclude as before by noting that $q^2+q+1-\# C(\FF_q)>0$.

\subsection*{$a=1$} Let $\alpha_1,\dots,\alpha_6$ be a normal basis of $\FF_{q^6}$ over $\FF_q$, i.e.~the $\alpha_i$ are a basis of $\FF_{q^6}/\FF_q$ with  $\alpha_{i+1}=\alpha_i^{q}$ for $i=1,\dots, 6$, where the subscripts are taken modulo $6$. Write $P_i=[1:\alpha_i:\alpha^3_i]$ and note that the collection $\{P_1,\dots, P_6\}$ forms a closed point of degree $6$. For distinct $P_i$, $P_j$, and $P_k$, the determinant \eqref{mat:collinear} here is 
\begin{equation*}
(\alpha_k-\alpha_i)(\alpha_k-\alpha_i)(\alpha_j-\alpha_i)(\alpha_i+\alpha_j+\alpha_k)\neq 0\,,
\end{equation*}
hence $P_i,P_j,P_k$ are not collinear by Lemma \ref{lem:general_pos_det}. The matrix \eqref{mat:conic} has determinant

\begin{equation*}
\prod_{1\leq i< j\leq 6}(\alpha_j-\alpha_i)(\alpha_1+\dots+\alpha_6) \neq 0 \,,
\end{equation*}
hence the points lie in general position by Lemma \ref{lem:general_pos_det}.

\subsection*{$a=0$} We will show the existence of two closed points of degree 2 and one closed point of degree 3 in general position (we choose this configuration as it will also be used in the proof of Theorem \ref{thm:dp4}). Blowing these up we obtain a degree $2$ del Pezzo surface with trace $1$. This surface contains a line $L$,  corresponding to the line in $\PP^2$ passing through one of the closed points of degree 2. By Lemma~\ref{lem:blow_up}, the blow-down of $L$ yields the required cubic surface $S$ with $a(S) = 0$ (this is No.~14 in Table \ref{tab:Manin}).

Choose $\bar{\FF}_q$-points $P_1,P_2$ and $Q_1,Q_2,Q_3$ on $C$ which form closed points of degree $2$ and $3$, respectively; these lie in general position by Lemma \ref{lem:general_pos}. Let $L$ be the line through $P_1,P_2$. As $\#\PP^2(\FF_{q^2}) - \#\PP^2(\FF_{q}) - \#C(\FF_{q^2}) - \#L(\FF_{q^2}) =q^4 - 2q^2 - q - 2 > 0$, there are $R_1,R_2 \in \PP^2(\FF_{q^2})$ that form a closed point of degree $2$ not in $C \cup L$. Moreover, a simple consideration of the Galois action shows that no $3$  are collinear.

It remains to show that no 6 lie on a conic. Let $D$ be a conic passing through 6 of the points and let $D^{\Frob_q}$ be the conjugate conic under the action of $\Frob_q \in \Gal(\bar{\FF}_q/\FF_q)$. The conics $D$ and $D^{\Frob_q}$ have $5$ points in common, thus $D=D^{\Frob_q}$, i.e.~$D$ is defined over $\FF_q$. As none of our points are rational, we see that $D$ actually contains all seven points, hence $D = C$. But $R_i \notin C$ by construction; a contradiction.

\subsection*{$a=-1$} Such a surface was constructed by Rybakov in the proof of \cite[Thm.~3.2]{Ryb05} for his study of quartic del Pezzo surfaces. For completeness we recall this construction and clarify why it works for all $q$.

Recall from class field theory \cite[Thm.~1.5.36(i)]{Poo17} that we have the following fundamental
exact sequence 
\begin{equation} \label{seq:BR}
	0 \longrightarrow \Br K \longrightarrow \bigoplus_{P \in \PP^1} \Br K_P \overset{\sum_P \inv_P}{\longrightarrow} \QQ/\ZZ \longrightarrow 0
\end{equation}
of the Brauer group of $K=\FF_q(t)$. Here the direct sum is over the closed points of $\PP^1_{\FF_q}$, and $K_P$ denotes the completion of $K$ at $P$. (Section 1.5 of Poonen's book \cite{Poo17} is a great reference for the Brauer group facts we shall require).

Let $P_1,P_2,P_3 \in \PP^1(\FF_q)$ be distinct and take $P_4 \in \PP^1$ a closed point of degree $2$ (these exist for all $q$).
From the exactness of \eqref{seq:BR}, there exists a $2$-torsion element $\alpha \in \Br K$ such that for all closed points $P \in \PP^1$ we have $\inv_P \neq 0$ if and only if $P \in \{P_1,\dots,P_4\}$. Moreover, by \cite[Thm.~1.5.36(iv)]{Poo17} the element $\alpha$ has index $2$, thus $\alpha$  has a representative which is a quaternion algebra, i.e.~a $4$-dimensional central simple algebra over $K$. To this quaternion algebra one may naturally associate a conic $C$ over $K$ \cite[Prop.~1.5.9]{Poo17}. By the theory of minimal models of surfaces (see \cite{Isk79}, or in particular \cite[Lem.~2.8]{Ryb05}) there exists a  relatively minimal conic bundle $\pi:S \to \PP^1$ whose generic fibre is isomorphic to $C$. Moreover, this has the property that the fibre over a closed point $P$ is singular if and only if $P \in \{P_1,\dots,P_4\}$ \cite[Lem.~2.9]{Ryb05}. We claim that this is the required surface.

To see this note that $\pi:S \to \PP^1$ is a relatively minimal conic bundle with exactly $5$ singular fibres over $\bar{\FF}_q$. It thus follows from \cite[Thm.~3(3)]{Isk79} and \cite[Thm.~5(1)]{Isk79} that $S$ is in fact a cubic surface. Finally Lemma \ref{lem:conic_bundle} implies that $a(S) = -1$, as claimed (this yields No.~9 from Table~\ref{tab:Manin}).

\subsection*{$a=-2$} This case was already handled by Swinnerton-Dyer \cite{SD10}.

\smallskip
\noindent This completes the proof of Theorem \ref{thm:dp3}. 

\subsection{Proof of Theorem \ref{thm:dp4}}
The case $a=6$ follows from Lemma \ref{lem:five_points}. For $1 \leq  a \leq 4$, in the proof of Theorem \ref{thm:dp3} we showed the existence of a smooth cubic surface $S$ with a line over all $\FF_q$ such that $a(S) = a+1$; contracting this line yields the required quartic del Pezzo surface by Lemma \ref{lem:blow_up}. For $a=-1$, consider the points $P_i,Q_i,R_i$ constructed in the trace $0$ case of cubic surfaces and let $S'$ be their blow-up. This contains two skew lines, corresponding to the line through $P_1,P_2$ and the conic through $P_1,P_2,Q_1,Q_2,Q_3$. The blow down of these lines yields the required surface. For $a=-2,0$, the required surfaces have been constructed by Rybakov \cite[Thm.~3.2]{Ryb05} (these are $X$ and $XVIII$ from Table \ref{tab:Manin}, respectively).

\section{Del Pezzo surfaces of degree \texorpdfstring{$2$}{Lg}}\label{sec:DP2}
In this section we establish Theorem \ref{thm:dp2}.

\subsection{Definitions and basic properties} \label{sec:Def_DP2}

Let $k$ be a field. Any del Pezzo surface $S$ of degree $2$ over $k$ can be written
in the form
\begin{equation} \label{eqn:DP2}
	w^2 + f_2(x,y,z)w = f_4(x,y,z) \quad \subset \PP(1,1,1,2), \qquad \deg f_i = i. 
\end{equation}

\subsubsection{The ramification curve}
The anticanonical map $\pi: S \to \PP^2$ is given by
$[x:y:z:w] \mapsto [x:y:z]$,
and realises $S$ as a double cover of $\PP^2$. 
The behaviour in characteristic $2$ is slightly different; a good general
reference for double covers in characteristic $2$ is \cite[\S0.1]{CD89}.
The morphism $\pi$ is separable in all characteristics.

When $\chr k \neq 2$, we may choose equations so that $f_2(x,y,z)=0$. 
In which case, the double cover is ramified over the smooth quartic curve $B: f_4(x,y,z) = 0$.
When $\chr k = 2$ the branch curve $B$ is the plane conic $f_2(x,y,z) = 0$ (this can be reducible
or non-reduced). In both cases, we define the ramification curve to be $R = \pi^{-1}(B)_{red}$, 
i.e.~the reduced subscheme underlying $\pi^{-1}(B)$.

The following lemma on the geometry of the ramification curve will be used in Section~\ref{sec:DP1}.
We define the genus $g(C)$ of a geometrically irreducible (possibly singular or non-reduced) projective curve $C$
to be the genus of the normalisation of $C_{red}$.

\begin{lemma} \label{lem:ramification_curve}
	Let $S$ be a del Pezzo surface of degree $2$ over an algebraically closed field $k$ with ramification curve $R$.
	\begin{itemize}
		\item If $\chr k \neq 2$ then $R$ is irreducible smooth and of genus $3$.
		\item If $\chr k = 2$ then $R$  has at most $2$ irreducible components, and each irreducible 
		component has genus $0$.
	\end{itemize}
\end{lemma}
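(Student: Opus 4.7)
The plan is to handle the two characteristic cases separately, since the branch geometry of the anticanonical double cover $\pi: S \to \PP^2$ is entirely different in characteristic $2$.

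First, suppose $\chr k \neq 2$. Completing the square in $w$ brings the equation of $S$ to $w^2 = f_4(x,y,z)$, so $R$ is cut out on $S$ by $w=0$ and projects isomorphically onto the branch curve $B = \{f_4 = 0\}$. The key step is to check that $B$ is smooth: if $P$ were a singular point of $B$, then at $(P,0) \in S$ all partial derivatives of $w^2 - f_4$ with respect to $x, y, z, w$ would vanish, contradicting the smoothness of $S$. A smooth plane quartic is automatically irreducible (two components of a reducible quartic meet by Bezout, creating singularities), and by the genus--degree formula it has genus $\binom{4-1}{2} = 3$, giving the claim.

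Now suppose $\chr k = 2$. Since $\pi$ is separable, $f_2$ is nonzero, and $B = \{f_2 = 0\}$ is a plane conic. Over an algebraically closed field such a conic is either a smooth conic, a pair of distinct lines, or a double line; in each case $B_{red}$ has at most two irreducible components, and every component is isomorphic to $\PP^1$. For any irreducible component $B_0$ of $B_{red}$, the restriction of the equation of $S$ to $\pi^{-1}(B_0)$ becomes $w^2 = f_4|_{B_0}$, and in characteristic $2$ over an algebraically closed field this has a unique solution $w$ at every point of $B_0$. Thus $\pi^{-1}(B_0)_{red} \to B_0$ is a bijection on closed points, so exactly one irreducible component of $R$ lies above each $B_0$, yielding at most two components overall.

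It remains to see that each such component $R_0$ has genus $0$. The map $R_0 \to B_0 \cong \PP^1$ is a finite universal homeomorphism of integral curves, hence purely inseparable, and any purely inseparable finite cover of $\PP^1$ has function field abstractly isomorphic to $k(t)$ via an iterated Frobenius, so its normalisation is again $\PP^1$. I expect the main obstacle to be this characteristic $2$ analysis: one must simultaneously control the component count of $R$ via the set-theoretic description of the fibres and invoke the rationality of purely inseparable covers of $\PP^1$ at the level of function fields. The three geometric subcases for the conic $B$ (smooth, split, and non-reduced) are then absorbed uniformly by the same argument.
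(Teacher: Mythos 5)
Your proof is correct and follows essentially the same route as the paper: smoothness of the branch quartic plus the genus--degree formula in characteristic $\neq 2$, and in characteristic $2$ the observation that $R_{red}\to B_{red}$ is purely inseparable of degree $2$ over each component of the conic. The only cosmetic difference is at the last step, where the paper cites the general fact that a purely inseparable morphism preserves the genus (so $g(N)=g(B)=0$), while you argue directly that a purely inseparable cover of $\PP^1$ over an algebraically closed field has rational function field; both are valid.
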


\begin{proof}
	When $\chr k \neq 2$ the result is clear, as $R \cong B$ is a smooth plane quartic. So assume that $\chr k = 2$.
	Here $\pi^{-1}(B)$ has the equation
	$$\pi^{-1}(B): \quad f_2(x,y,z) = 0, \,\,  w^2 = f_4(x,y,z) \quad \subset S.$$
	It suffices to consider the various possibilities for $B$.% (the reader is advised to consult \cite[p.~14]{CD89}).
	\begin{enumerate}
		\item $B$ is a smooth plane conic: Here $R = \pi^{-1}(B)$ is irreducible and reduced, but may be singular. The morphism $R \to B$
		is purely inseparable of degree $2$. Let $N \to R$ be the normalisation of $R$. The induced map $N \to B$ is still purely
		inseparable of degree $2$, hence $g(N) = g(B)$ (see e.g.~\cite[Lem.~8.6.6]{Kem93}) and thus $g(R)=0$.
		\item $B=L_1 \cup L_2$ is a union of $2$ distinct lines: 
		Each $R_i:=\pi^{-1}(L_i)$ is irreducible and reduced and the
		map $R_i \to L_i$ is purely inseparable of degree $2$. As in the previous case, we find that $g(R_i) = 0$.
		\item $B = L^2$ is a double line: Here $\pi^{-1}(B)$ is non-reduced, but $R \to L$ is still purely inseparable of degree $2$.
		As above, we conclude that $g(R) = 0$. 
	\end{enumerate}
\end{proof}

\subsubsection{Geiser twists} \label{sec:quadratic_twist_2}
The map $\pi$ induces an involution of $S$, called the \emph{Geiser involution}.
We may therefore twist by some cocycle $\alpha$ with class in $\mathrm{H}^1(k,\ZZ/2\ZZ)$, where $\ZZ/2\ZZ$ acts on $S$ via the Geiser involution, to obtain the \emph{Geiser twist}
$S_\alpha$ by $\alpha$ (this is a quadratic twist of $S$).
Over a finite field there is a unique non-trivial Geiser
twist up to isomorphism; we let $S_{\sigma}$ denote the choice of such a twist.

For completeness we give equations for these twists, though these will not be used in the sequel.
When $\chr k \neq 2$, we choose the equation \eqref{eqn:DP2} so that $f_2 = 0$. 
Kummer theory gives $\mathrm{H}^1(k,\ZZ/2\ZZ)=k^*/k^{*2}$, and for $\alpha \in k^*$
we have 
$$S_\alpha: \quad \alpha w^2  = f_4(x,y,z).$$
When $\chr k = 2$, as $\pi$ is separable, one may write down equations for $S_\alpha$ using Artin-Schreier theory
instead of Kummer theory. This yields $\mathrm{H}^1(k, \ZZ/2\ZZ) \cong k/\wp k$,
where $\wp(\alpha) = \alpha^2 - \alpha$. For $\alpha \in k$, the associated Geiser twist
is given by
$$S_\alpha: \quad w^2 + f_2(x,y,z)w = f_4(x,y,z) + \alpha f_2(x,y,z)^2.$$

\begin{lemma} \label{lem:quadratic_2}
	Let $S$ be a del Pezzo surface of degree $2$ over a finite field $\FF_q$ and 
	$S_\sigma$ its non-trivial Geiser twist. Then
	$a(S) + a(S_\sigma) = 2.$
%	In particular, for each $a \in \mathcal{A}_1$ we have that $B_a = B_{2-a}$.
\end{lemma}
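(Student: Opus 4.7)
The plan is to prove the identity by a point-count on both sides of the double cover $\pi: S \to \PP^2$ and an application of the Weil formula \eqref{eqn:Weil}. Since $S$ and $S_\sigma$ become isomorphic over $\bar\FF_q$ and the Geiser involution $\sigma$ interchanges the two sheets of $\pi$ over every unramified geometric point, I would exploit this to show that $\#S(\FF_q) + \#S_\sigma(\FF_q)$ is governed entirely by $\#\PP^2(\FF_q)$, not by the branch locus $B$.

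Concretely, I would fix a rational point $P \in \PP^2(\FF_q)$ and analyse the two geometric points of $\pi^{-1}(P)$ (or the unique one, if $P \in B$). When $P \notin B(\FF_q)$, the fibre consists of two distinct geometric points $Q_1, Q_2$ with $\sigma(Q_1) = Q_2$. On $S$, the Frobenius $\Frob_S$ either fixes both points (giving $2$ rational points on $S$ and $0$ on $S_\sigma$, since $\sigma\circ \Frob_S$ swaps them), or it swaps them (giving $0$ rational points on $S$ and $2$ on $S_\sigma$). Either way the fibre contributes $2$ to the sum $\#S(\FF_q) + \#S_\sigma(\FF_q)$. When $P \in B(\FF_q)$, the fibre is a single geometric point $Q$ fixed by $\sigma$; it is automatically rational on both $S$ and $S_\sigma$ (using that $\pi$ is separable, so that in characteristic $\neq 2$ the map $R\to B$ is an isomorphism, and in characteristic $2$ it is purely inseparable and hence still a bijection on points over $\FF_q$ by Lemma \ref{lem:ramification_curve}). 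Thus this fibre again contributes $2$.

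Summing over all $P \in \PP^2(\FF_q)$ gives
\begin{equation*}
\#S(\FF_q) + \#S_\sigma(\FF_q) = 2\#\PP^2(\FF_q) = 2(1 + q + q^2).
\end{equation*}
Substituting the Weil formula $\#S(\FF_q) = 1 + a(S)q + q^2$ and the analogous expression for $S_\sigma$ and cancelling, one obtains $a(S) + a(S_\sigma) = 2$ as desired.

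The only subtle point is the characteristic $2$ case, where $B$ is a conic (possibly singular or non-reduced) and the ramification locus $R \to B$ is purely inseparable rather than birational. The main obstacle is therefore to justify that a rational point of $B$ still lifts to a unique rational point of $S$; this reduces to the separability of $\pi$ together with the description of $R$ recalled in Lemma~\ref{lem:ramification_curve}. Once this is in hand, the counting is uniform across characteristics and the identity drops out immediately.
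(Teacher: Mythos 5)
Your proof is correct and follows essentially the same route as the paper: both fibre $\pi^{-1}(x)$ over each $x\in\PP^2(\FF_q)$, observe that each fibre contributes exactly $2$ to $\#S(\FF_q)+\#S_\sigma(\FF_q)$ (handling $x\in B$ via the purely inseparable map $R_{\mathrm{red}}\to B_{\mathrm{red}}$ in characteristic $2$, exactly as in Lemma \ref{lem:ramification_curve}), and then apply \eqref{eqn:Weil}. Your explicit Galois-theoretic justification that Frobenius on $S_\sigma$ acts as $\sigma\circ\Frob_S$, so that exactly one of the two twists has a rational fibre over an unramified point, is just a slightly more detailed version of the step the paper states without elaboration.
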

\begin{proof}
	Let $\pi:S \to \PP^2$ (resp.~$\pi_\sigma:S_\sigma \to \PP^2$) be the associated
	double cover of $\PP^2$, with branch locus $B$. Let $x \in \PP^2(\FF_q)$. If $x \in B$,
	then $\pi^{-1}(x)$ and $\pi_\sigma^{-1}(x)$ both have a single rational point
	(if $\chr k \neq 2$ this is clear; if $\chr k = 2$ then one observes,
	as in the proof of Lemma \ref{lem:ramification_curve}, that $R_{red} \to B_{red}$ is purely inseparable).
	If $x \notin B$, then one of $\pi^{-1}(x)$ or
	$\pi_\sigma^{-1}(x)$ contains exactly two rational points, and the other none. 
	Taking these contributions together we obtain
	$$\#S(\FF_q) + \#S_\sigma(\FF_q) = 2\#\PP^2(\FF_q) = 2(q^2 + q + 1).$$
	The result follows on recalling \eqref{eqn:Weil}.
\end{proof}

\begin{remark}
Analogues of Lemma \ref{lem:quadratic_2} for elliptic curves are well-known; see for example Exercises 61 and 62 of \cite{Eke06}.
\end{remark}

\subsection{Proof of Theorem \ref{thm:dp2}}
By Lemma \ref{lem:quadratic_2} we need only consider $a \geq 1$. We shall handle these cases using a similar strategy to the proof of Theorem \ref{thm:dp3}.

\subsection*{$a=8$}Here we are concerned with characterizing those $q$ for which there exist $7$ points in $\PP^2(\FF_q)$ in general position. There are already some results in the literature concerning this problem. For example in \cite[Lem.~68]{Kap13} this problem is solved for all \emph{odd} $q$. We give a new proof which applies to \emph{all} $q$.

\begin{proposition} \label{prop:7_points}
$\PP^2_{\FF_q}$ has  $7$ rational points in general position if and only if $q \geq 9$. 
\end{proposition}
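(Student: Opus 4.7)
The plan is to treat the two directions separately. Non-existence for $q \in \{2,3,5\}$ is immediate from Corollary~\ref{corollary: six points}: without six points in general position there cannot be seven. That leaves constructing seven points in general position for all prime powers $q \geq 9$, and ruling out $q \in \{4,7,8\}$.

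For the existence direction I would extend the counting template of the $a=7$ case. Fix five points $P_1,\dots,P_5$ on a smooth conic $C$ and a sixth point $P_6$ chosen off the forbidden locus of Lemma~\ref{lem:a=7}; for $q \geq 9$ there are $(q-5)^2 \geq 16$ such choices. The obstruction to adjoining $P_7$ is the union $B$ of the fifteen lines $L_{ij}$ (for $1\leq i<j\leq 6$) together with the six conics $C_k$, each through the five points $P_m$ with $m\neq k$. A careful inclusion--exclusion --- using that each $P_i$ lies on exactly five of the lines and five of the conics, that two lines always meet at a rational point, that $L_{ij}\cap C_k$ equals $\{P_i,P_j\}$ when $k\notin\{i,j\}$ and otherwise generically contains one further rational point besides $P_j$ (or $P_i$), and that any two distinct conics are Bezout-forced to intersect only at the $P_i$ --- yields in the generic configuration
$$\#B(\FF_q) \;=\; 21q - 108,$$
hence a complement of $(q-10)^2 + 9$ points in $\PP^2(\FF_q)$. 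Each non-generic coincidence (three $L_{ij}$ concurring outside $\{P_1,\dots,P_6\}$, a tangency of $L_{ij}$ with $C_i$, collision of the auxiliary intersection points with each other or with a pre-existing curve) cuts out a strict algebraic condition on $P_6$, so the bad choices of $P_6$ number $O(q)$; since $(q-5)^2 = q^2 - O(q)$ dominates, a good $P_6$ exists whenever $q \geq 9$.

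For the non-existence at $q \in \{4,7,8\}$ the generic count $(q-10)^2 + 9$ is still positive, so the dichotomy must be driven by degeneracies that force $B$ to cover all of $\PP^2(\FF_q)$. For $q=4$ the complement of Lemma~\ref{lem:a=7}'s forbidden locus has size $(q-5)^2 = 1$, so there is essentially a unique six-point general-position configuration (up to $\PGL_3(\FF_4)$), and a direct check shows its fifteen lines and six conics exhaust $\PP^2(\FF_4)$. For $q\in\{7,8\}$ I would appeal to the arc-theoretic fact that every seven-arc in $\PP^2(\FF_q)$ has six of its points on a conic --- a consequence of Segre's theorem for $q=7$ and of the classification of hyperovals for $q=8$ --- or, failing a clean arc-theoretic argument, enumerate $\PGL_3(\FF_q)$-orbits of six-point general-position configurations by machine; either $q$ is small enough that the orbit count is manageable.

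The main obstacle is the non-existence at $q\in\{7,8\}$: the generic count is comfortably positive here ($18$ and $13$ points of complement respectively), yet the claim is that every configuration degenerates enough to leave no valid seventh point. Producing a genuinely combinatorial obstruction, rather than a counting one, for these two fields is where the real content of the proposition lies.
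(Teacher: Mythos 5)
There is a genuine gap in your existence argument for $9 \leq q \leq 16$. Your generic count $\#B(\FF_q) = 21q-108$ is correct, but the step ``the bad choices of $P_6$ number $O(q)$, so a good $P_6$ exists whenever $q \geq 9$'' is not a proof: an unspecified implied constant cannot beat $(q-5)^2 = 16$ at $q=9$. Making the constant explicit is hopeless at this scale --- you must exclude, among other things, each of the $\binom{15}{3}-\text{(forced concurrences)}$ potential triple points of lines, $30$ potential tangencies $L_{ij}\cap C_i$, and all collisions among the $75$ auxiliary intersection points, and each such condition can individually kill up to $O(q)$ choices of $P_6$ (indeed some, such as three of $L_{12},\dots,L_{45}$ concurring, do not involve $P_6$ at all and must instead be imposed on $P_1,\dots,P_5$). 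The paper avoids this by proving a \emph{configuration-independent} upper bound: writing $n_i$ for the number of points on exactly $i$ of the fifteen lines, the identity $3n_3+n_2=45$ gives $\#\mathfrak{L}(\FF_q)=15q-54+n_3$ and hence $\#\mathfrak{C}(\FF_q)\leq 21q-63$ for \emph{every} choice of $P_1,\dots,P_6$ in general position; this yields a seventh point for $q\geq 17$, and the remaining cases $q=9,11,13,16$ are settled by a short computer search. Your sharper generic bound does not substitute for that search unless you can certify that a near-generic configuration actually exists over $\FF_9$, which is itself a finite computation.

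For the non-existence direction the paper takes a quite different, uniform route: if seven rational points in general position existed, the blow-up would be a split degree $2$ del Pezzo surface, and blowing down one exceptional line produces a split smooth cubic surface with a rational point not on any line; Hirschfeld proved no such surface exists for $q\leq 8$, which disposes of $q\in\{2,3,4,5,7,8\}$ in one stroke. Your case analysis is salvageable --- $q=4$ is immediate since the maximal arc size in $\PP^2(\FF_4)$ is $q+2=6$, and for $q=7,8$ the arc-theoretic facts you invoke are true (every $7$-arc in $\PP^2(\FF_7)$ lies on a conic, and every $7$-arc in $\PP^2(\FF_8)$ extends to a regular hyperoval, so six of the points always lie on a conic) --- but as written you leave these two cases conditional on results you have not pinned down, and they are precisely where, as you say yourself, the content lies. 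You would need to supply references or proofs for those facts (they are essentially the content of the Hirschfeld results the paper cites) to complete the argument.
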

\begin{proof}
Let $S$ be the degree $2$ del Pezzo surface given by blowing up these points. The blow-down of a line $L$ is a split smooth cubic surface $S'$, and the image of $L$ is a rational point not on a line. However, as proved by Hirschfeld in \cite[Thm.~20.3.9, Thm.~20.3.10]{Hir85} (see also \cite{Hir82}),  there is no such smooth cubic surface when $q \leq 8$.

Suppose now that $q\geq 9$. In \cite{Hir82}, it is shown that there is a smooth split cubic surface over $\FF_q$ which contains a rational point not on a line. We give our own proof, as the paper \cite{Hir82} is difficult to obtain. Let $P_1,\dots,P_6$ be $6$ rational points in general position and let $\mathfrak{L}$ denote the union of the $15$ lines through pairs of them. Let $n_i$ be the number of points where exactly $i$ of these lines meet. One sees that
$$ \binom{5}{2}n_5 + \binom{3}{2}n_3 + n_2 = \binom{15}{2}.$$
As $n_5 = 6$, we deduce that $3n_3 + n_2 = 45$ and that
$$
\#\mathfrak{L}(\FF_q)=15(q+1)-4 n_5-2n_3-n_2=15q-54+n_3.
$$
Let $\mathfrak C$ denote the union of $\mathfrak L$ and the six conics through any five of the points. Each conic adds at most $q-4$ points to the configuration. Hence 
$$
\#\mathfrak{C}(\FF_q)\leq 15q-54+n_3 + 6(q-4) = 21q-78+n_3\leq 21 q-63.
$$
We deduce that 
$$
\#(\PP^2 \setminus \mathfrak C)(\FF_q) \geq (q-10)^2-36.
$$
This is positive for $q \geq 17$. For the remaining values $q=9,11,13,16$, rational points in general position are easily found using a computer search and Lemma~\ref{lem:general_pos_det}.
\end{proof}

\subsection*{$a=6$} By \cite[Tab.~1]{Ura96a}, any degree 2  del Pezzo surface $S$  with $a(S) = 6$ is the blow-up of $\PP^2$ in  $5$ rational points and a closed point of degree $2$ in general position. 

By Lemma \ref{lem:five_points}, such a configuration does not exist for $q=2,3$. The value $q=4$ is small enough that one can quickly enumerate the relevant collections of points in $\PP^2$ on a computer, and verify their non-existence using Lemma \ref{lem:general_pos_det}.

Let now  $q \geq 5$. Let $P_1,P_2,P_3 \in \PP^2(\FF_q)$ and choose $P_4,P_5 \in \PP^2(\FF_{q^2})$ which form a closed point of degree $2$, such that $P_1,P_2,P_3,P_4,P_5$ lie in general position (the existence of such a collection was shown in the proof of Theorem \ref{thm:dp3}). Let $\mathfrak{C}$ be the union of the conic through all five and the ten lines through pairs of these five points. By \eqref{eqn:a=5} we have $\#\mathfrak{C}(\FF_q) = 5q - 4$. Choose a sixth rational point $P_6$ which lies in general position with respect to the others.

Let $\mathfrak{C}'$ be the union of $\mathfrak{C}$ with the five lines from $P_6$ to the other five points and the five conics through $P_6$ and four of the $P_1,\ldots,P_5$ (we call these the \emph{five new lines} and \emph{five new conics} respectively, and the rational points on them \emph{not} on $\mathfrak{C}$ nor equal to $P_6$ as \emph{new rational points}). Only three of the five new lines are rational, and the number of new rational points on the new lines is at most $3q - 6$. Similarly, the number of new rational points on the new conics is at most $3q - 6$. Thus 
$$\#\mathfrak{C}'(\FF_q) \leq \#\mathfrak{C}(\FF_q) + 1 + 2(3q -6) = 11q - 15.$$
Hence $\#(\PP^2 \setminus \mathfrak{C}')(\FF_q) \geq (q-5)^2 - 9$, and so we can choose a seventh rational point in general position provided $q \geq 9$.

For the remaining values $q=5,7,8$ a computer search reveals that the required configurations exist.

\subsection*{$a=5$} By \cite[Tab.~1]{Ura96a}, any degree $2$ del Pezzo surface $S$  with $a(S) = 5$ is the blow-up of $\PP^2$ in $4$ rational points and a closed point of degree $3$ in general position. Such a surface $S$ does not exist over $\FF_2$: by Lemma \ref{lem:quadratic_2} its Geiser twist $S_\sigma$ has $a(S_\sigma)=-3$, hence $S_\sigma$ has a negative number of rational points by \eqref{eqn:Weil}.

Let $q\geq 3$ and let $C$ be a smooth conic over $\FF_q$. Let $P_1 \in C(\FF_{q^3}) \setminus C(\FF_q)$ with conjugates $P_2,P_3$. Choose $P_4,P_5 \in C(\FF_q)$ and let $P_6 \in \PP^2(\FF_q)$ be in general position with respect to the other five; this was shown to exist in the proof of Theorem \ref{thm:dp3}. Let $\mathfrak{C}$ be the union of $C$ and the line through $P_4,P_5$. We have $\#\mathfrak{C}(\FF_q) = 2q$.

Now consider the 5 new lines and 5 new conics, as in the $a = 6$ case. Only two of the new lines and two of the new conics are rational, and the non-rational lines and conics contain no new rational points. Thus the number of new rational points is at most $4(q - 1)$.
We have $q^2 + q + 1 - 1 - 4(q - 1) - 2q = q^2 - 5q + 4$, thus for $q \geq 5$ one may always choose a seventh rational point in general position.

For $q = 3,4$ a computer search reveals that such collections of points exist.

\subsection*{$a=4$} We first rule out $q = 2$. Note that, by Lemma \ref{lem:quadratic_2}, the Geiser twist of such a surface over $\FF_2$ has a unique rational point. The non-existence of such a surface has been proved in \cite[p.~14]{STVA14} and \cite[Thm.~4.1.1]{Li10}. Both proofs used intensive computer searches to enumerate del Pezzo surfaces over $\FF_2$ and verify that no such surface exists. We give a more conceptual proof,  using the classification of conjugacy classes due to Urabe \cite{Ura96a} (though of course some computation is implicitly used in the proof, as a computer was used to determine the conjugacy classes of $W(\E_7)$).

\begin{lemma}
	There is no del Pezzo surface $S$ of degree $2$ over $\FF_2$ with $a(S) = 4$.
\end{lemma}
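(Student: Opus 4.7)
The plan is to combine a direct point-counting argument on the anticanonical double cover $\pi:S\to\PP^2$ with Urabe's classification of conjugacy classes in $W(\E_7)$. By \eqref{eqn:Weil}, such an $S$ would satisfy $\#S(\FF_2)=1+4\cdot 2+4=13$. Decomposing the count through $\pi$ gives
\[
\#S(\FF_2)=\#B(\FF_2)+2n,
\]
where $B$ is the branch conic and $n$ is the number of non-branch $\FF_2$-points of $\PP^2$ whose fibre is $\FF_2$-split. Combined with $n\le 7-\#B(\FF_2)$ from $\#\PP^2(\FF_2)=7$, the equation $\#B(\FF_2)+2n=13$ forces $\#B(\FF_2)=1$ and $n=6$.

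In characteristic $2$, Lemma~\ref{lem:ramification_curve} says $B$ is a plane conic, and the only such conic over $\FF_2$ with a single rational point is the union of two $\FF_4$-conjugate lines meeting at that point. Normalising via an $\FF_2$-automorphism of $\PP^2$ I may take $f_2=x^2+xy+y^2$, so the intersection point is $[0:0:1]$ and
\[
\mathcal P=\{[1:0:0],[0:1:0],[1:1:0],[1:0:1],[0:1:1],[1:1:1]\}
\]
is the set of six non-branch rational points. The Artin--Schreier split criterion for a fibre of $w^2+f_2 w=f_4$ over a non-branch point reduces over $\FF_2$ to $f_4(x)=0$; thus $n=6$ imposes the six linear vanishing conditions $f_4|_{\mathcal P}=0$.

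The remaining step, which I expect to be the main obstacle, is to show that no choice of $f_4$ satisfying these six conditions yields a smooth surface $S$. In characteristic $2$ the singular locus of $F=w^2+f_2 w+f_4$ lies over $B$, and smoothness at a point $(x_0,w_0)$ above $B$ demands that at least one of the expressions $(\partial_i f_2)(x_0)\,w_0+(\partial_i f_4)(x_0)$ be non-zero. I would impose these inequalities explicitly at $[0:0:1]$ (where $\partial_i f_2$ vanishes, forcing $c_{103}\ne 0$ or $c_{013}\ne 0$) and at the two pairs of $\FF_4$-conjugate ramification points above the branch lines $x+\omega y=0$ and $x+\omega^2 y=0$, and show that the resulting linear system is inconsistent with $f_4|_{\mathcal P}=0$. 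As a cross-check I would use \cite{Ura96a} to list the conjugacy classes of $W(\E_7)$ with trace $4$ and rule each out individually: any class carrying a Galois-invariant conic bundle class contradicts $a(S)=4>2$ via Lemma~\ref{lem:conic_bundle}(2), while the remaining classes correspond to blow-ups of $\PP^2$ at $3$ rational points together with closed points of total degree $4$, for which a finite search using Lemma~\ref{lem:general_pos_det} rules out the existence of the required general-position configurations over $\FF_2$.
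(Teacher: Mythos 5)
Your reduction is correct and is a genuinely different starting point from the paper's: from $\#S(\FF_2)=13$ and the fibre count $\#S(\FF_2)=\#B(\FF_2)+2n$ with $n\le 7-\#B(\FF_2)$ you rightly force $\#B(\FF_2)=1$, $n=6$, hence $B$ is a pair of $\FF_4$-conjugate lines and $f_4$ must vanish at all six non-branch rational points. The Artin--Schreier splitting criterion is also applied correctly. However, the proof stops exactly where the real work begins: you do not show that no smooth surface satisfies these constraints, you only announce a plan to do so. Moreover the plan as described is not obviously sufficient. The six conditions $f_4|_{\mathcal P}=0$ cut out a $9$-dimensional space of quartics, and they do not by themselves force $c_{103}=c_{013}=0$ (so smoothness at the point above $[0:0:1]$ is not obstructed); you would therefore have to locate a forced singularity somewhere on the two conjugate branch lines, and checking non-smoothness only at a short list of hand-picked points does not a priori certify that \emph{every} admissible $f_4$ is singular \emph{at one of those points} rather than elsewhere on $B$ (or nowhere). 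Whether the "linear system" of smoothness constraints is genuinely inconsistent with $f_4|_{\mathcal P}=0$ is precisely the content of the lemma in this formulation, and it is left unverified.

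The fallback "cross-check" is essentially the paper's actual argument, but as sketched it also has errors. Lemma~\ref{lem:conic_bundle}(2) gives $a(S)\le 2$ only for \emph{relatively minimal} conic bundles, so the presence of a Galois-invariant conic bundle class on $S$ does not by itself contradict $a(S)=4$; indeed the classes with $a=4$ that are blow-ups of $\PP^2$ carry plenty of non-relatively-minimal conic bundle structures. Your claimed dichotomy also misses the class of index $2$ (No.~27 in Urabe's table), which is \emph{not} a blow-up of $\PP^2$ in closed points, so a finite search for point configurations cannot dispose of it. The paper instead lists the three candidate classes $27$, $48$, $52$ from \cite[Tab.~1]{Ura96a} and kills each separately: class $48$ because base-changing to $\FF_4$ would produce $7$ rational points of $\PP^2$ in general position, contradicting Proposition~\ref{prop:7_points}; class $52$ because the pencil of conics through the degree-$4$ point would have $4$ rational members while $\#\PP^1(\FF_2)=3$; and class $27$ by contracting the two skew rational lines to get a minimal quartic del Pezzo surface of trace $2$ whose relatively minimal conic bundle would require two distinct closed points of degree $2$ on $\PP^1_{\FF_2}$, which has only one. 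To make your argument complete you would either need to carry out (and justify the sufficiency of) the singularity computation over the branch conic, or correctly execute the class-by-class elimination.
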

\begin{proof}
	An inspection of \cite[Tab.~1]{Ura96a} reveals that such a surface must have number $27,48$ or $52$, in the notation
	of \emph{loc.~cit.}. 
	
	Class $48$ has index $7$ and arises by blowing up $\PP^2$ in $3$ rational points and
	$2$ closed points of degree $2$. This configuration cannot arise over $\FF_2$;
	indeed, after a quadratic extension, one would obtain $7$ rational points
	in general position over $\FF_4$, which do not exist by Proposition \ref{prop:7_points}.
	
	Class $52$ also has index $7$ and arises by blowing up $\PP^2$ in $3$ rational points
	$P,Q,R$ and a closed point of degree $4$. Let $P_1,P_2,P_3,P_4$ be the points over $\bar{\FF}_q$
	given by the closed point of degree $4$, chosen so that the Frobenius
	element $\Frob_q$ acts via 	$\Frob_q(P_i) = \Frob_q(P_{i+1})$,
	where the subscripts are taken modulo $4$. Consider the pencil of conics in $\PP^2$ which 
	pass through $P_1,\dots,P_4$. This pencil contains $4$ elements over $\FF_q$:
	the $3$ conics through $P_1,\dots,P_4$ and one of the points $P,Q,R$, as well
	as the singular conic given by the lines passing through $P_1,P_3$ and $P_2,P_4$.
	However this is a contradiction as $\# \PP^1(\FF_2) = 3 < 4$.
	
	Consider next a surface $S$ with class $27$, which has index $2$ and contains $2$ skew lines.
	By Table \ref{tab:Manin}, the blow-down of these lines is a minimal quartic del Pezzo surface $S'$
	with Picard number $2$. Hence by \cite[Thm.~1]{Isk79}, the surface $S'$ is equipped with a relatively
	minimal conic bundle $S' \to \PP^1$ which has $4$ singular fibres over $\bar{\FF}_q$. As $a(S') = 2$, Lemma~\ref{lem:conic_bundle} implies that
	the singular fibres  lie over $2$ distinct closed points of $\PP^1$ of degree $2$. However $\PP^1$
	contains only $1$ closed point of degree $2$ over ${\FF_2}$, which is a contradiction.
\end{proof}

Assume now that $q \geq 3$. We claim that there are three rational points and a closed point of degree $4$ in general position. A similar method to the previous cases shows that there are at most $3q + 2$ rational points on the $15$ lines and $6$ conics through two rational points and a closed point of degree 4 in general position. So for $q \geq 3$ there is a seventh rational point in general position.

\subsection*{$a=3$} Consider a closed point of degree 5 on a smooth conic $C$ and a rational point $P \notin C$. The only rational point of $\PP^2 \setminus C$ which lies on one of the relevant lines and conics is $P$, and thus the resulting configuration of lines and conics contains exactly $q + 2$ rational points; hence there is a seventh rational point in general position.

\subsection*{$a=2$}Consider the closed point of degree 6 in general position that was constructed in the proof of Theorem \ref{thm:dp3}, and consider the 15 lines and 6 conics that it defines. This configuration has at most one rational point (three of the lines might cross in one point) and so one can always choose a rational point not on this configuration. 

\subsection*{$a=1$}  Let $\alpha_1,\dots,\alpha_7$ be a normal basis of $\FF_{q^7}$ over $\FF_q$. As in the proof of Theorem~\ref{thm:dp3}, we apply Lemma \ref{lem:general_pos_det} and find that the $P_i=[1:\alpha_i:\alpha^3_i]$ form a closed point of degree $7$ in general position.

\smallskip

\noindent
Applying Lemma \ref{lem:quadratic_2}, this completes the proof of Theorem \ref{thm:dp2}.

\section{Del Pezzo surfaces of degree \texorpdfstring{$1$}{Lg}}\label{sec:DP1}
In this section we prove Theorem \ref{thm:dp1}. %This is the most difficult case due to the requirement of working with the assumption that no $8$ points lie on a cubic with a singularity at one of the points.	
	
\subsection{Definitions and basic properties}

Let $k$ be a field. Any del Pezzo surface $S$ of degree $1$ over $k$ can be written
in the form
$$w^2 + f_1(x,y)zw + f_3(x,y)w = z^3 + f_2(x,y)z^2 + f_4(x,y)z + f_6(x,y) \subset \PP(1,1,2,3)$$
where $\deg f_i = i$. The double of the anticanonical map is given by $[x:y:z:w] \mapsto [x:y:z]$,
and realises $S$ as a double cover of $\PP(1,1,2)$. 
%It is ramified over the singular point and over a curve of degree $6$.
This map is separable and induces an involution of $S$, called the \emph{Bertini involution}.

\subsubsection{Bertini twists}
As in \S\ref{sec:quadratic_twist_2}, we may perform a quadratic twist by an element of $\alpha \in \mathrm{H}^1(k, \ZZ/2\ZZ)$ to obtain the \emph{Bertini twist} $S_\alpha$ of $S$.
%One may also write down explicit equations for these, as before.
Over a finite field, we denote the unique non-trivial Bertini twist of $S$ by $S_\sigma$.
We have the following analogue of Lemma \ref{lem:quadratic_2}.

\begin{lemma} \label{lem:quadratic_1}
	Let $S$ be a del Pezzo surface of degree $1$ over a finite field $\FF_q$ and
	$S_\sigma$ its non-trivial Bertini twist. Then $a(S) + a(S_\sigma) = 2.$
\end{lemma}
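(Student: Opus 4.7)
The plan is to follow the argument of Lemma~\ref{lem:quadratic_2} essentially verbatim, replacing the double cover $S \to \PP^2$ by the double cover $\pi\colon S \to \PP(1,1,2)$ recalled in the preamble and using the count $\#\PP(1,1,2)(\FF_q) = q^2 + q + 1$ (immediate from the isomorphism of $\PP(1,1,2)$ with the quadric cone $\{ac=b^2\}\subset \PP^3$, under which the vertex and the $q+1$ rulings each contribute as expected). The target identity is
$$
\#S(\FF_q) + \#S_\sigma(\FF_q) = 2\#\PP(1,1,2)(\FF_q) = 2(q^2+q+1),
$$
after which \eqref{eqn:Weil} yields $2+ (a(S)+a(S_\sigma))q + 2q^2 = 2(q^2+q+1)$ and hence $a(S) + a(S_\sigma) = 2$ directly.

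To establish the identity, I would analyse the fibres of $\pi$ and $\pi_\sigma$ over each $x \in \PP(1,1,2)(\FF_q)$ and show they together contain exactly two rational points. Let $B \subset \PP(1,1,2)$ denote the branch locus of $\pi$. If $x \notin B$, the fibres are \'etale of degree $2$; since $S_\sigma$ is the non-trivial quadratic twist of $S$ with respect to the Bertini involution, exactly one of $\pi^{-1}(x)$ and $\pi_\sigma^{-1}(x)$ is the split $\FF_q$-torsor (contributing $2$ rational points) and the other is non-split (contributing $0$). If $x \in B$, I claim that each of $\pi^{-1}(x)$ and $\pi_\sigma^{-1}(x)$ contains a unique rational point, which again sums to $2$.

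The main obstacle is the characteristic~$2$ case of the branch-locus analysis, requiring an analogue of Lemma~\ref{lem:ramification_curve} for double covers of $\PP(1,1,2)$. In characteristic different from $2$ the statement is immediate, because the fibre over a branch point is a single reduced geometric point stabilised by Galois, and the Bertini twist does not alter the ramification locus. In characteristic $2$ one would write down the branch locus explicitly (it is cut out by the $w$-derivative $f_1(x,y)z + f_3(x,y)$) and argue, as in Lemma~\ref{lem:ramification_curve}, that the induced map $\pi^{-1}(B)_{\mathrm{red}} \to B_{\mathrm{red}}$ is purely inseparable, hence a bijection on $\FF_q$-points; the same argument applies to $\pi_\sigma$. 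Summing the fibre contributions over all $x \in \PP(1,1,2)(\FF_q)$ and invoking \eqref{eqn:Weil} then completes the proof.
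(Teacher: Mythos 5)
Your proposal is correct and follows essentially the same route as the paper: the paper's proof is precisely to run the fibre-counting argument of Lemma~\ref{lem:quadratic_2} for the separable double cover $S \to \PP(1,1,2)$ and use $\#\PP(1,1,2)(\FF_q) = q^2+q+1$ to get $\#S(\FF_q)+\#S_\sigma(\FF_q)=2(q^2+q+1)$, then apply \eqref{eqn:Weil}. Your extra care with the characteristic~$2$ branch locus (the purely inseparable map $\pi^{-1}(B)_{\mathrm{red}}\to B_{\mathrm{red}}$) is exactly the point the paper leaves implicit by referring back to the proof of Lemma~\ref{lem:ramification_curve}.
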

\begin{proof}
	A similar strategy to the proof of Lemma \ref{lem:quadratic_2} shows that
	$$\#S(\FF_q) + \#S_\sigma(\FF_q) = 2\#\PP(1,1,2)(\FF_q).$$
	However $\PP(1,1,2)$ has $1 + q + q^2$ rational points,
	and the result follows.
\end{proof}

\subsection{Proof of Theorem \ref{thm:dp1}}\label{subsec:proofdp1}
We follow a similar strategy to the previous cases. However, to avoid working directly with the condition in Definition \ref{def:gen_pos} concerning cubic curves passing through the $8$ points, we shall often take the following approach.

Let $S'$ be a degree $2$ del Pezzo surface. The blow-up $S$ of $S'$ in a rational point $P$ is a 
 del Pezzo surface if and only only if 
$P$ does not lie on a line nor on the ramification curve \cite[Cor.~14]{STVA14} (we call the union of the lines and the ramification curve the \emph{bad locus}). We can use our constructions 
for degree $2$ del Pezzo surfaces to obtain certain traces, and then blow-up such a rational point to obtain the required surface of degree $1$. We will deduce the existence of a rational point not in the bad locus for sufficiently large $q$ using Lemma~\ref{lem:ramification_curve} and the Hasse-Weil bounds, which imply that the ramification divisor has at most $F(q)$ rational points, where
\begin{equation} \label{def:F}
	F(q) = 
	\begin{cases}
		q+1+6\sqrt{q}, & q \text{ odd}, \\
		2(q+1), & q \text{ even}.
	\end{cases}
\end{equation}

\subsubsection*{$a=9$} Every such degree $1$ del Pezzo surface is split.

\begin{lemma}
	Let $q \leq 13$ or $q=17$. Then there does not exist a split del Pezzo surface of degree
	$1$ over $\FF_q$.
\end{lemma}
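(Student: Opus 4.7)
A split del Pezzo surface of degree $1$ over $\FF_q$ is, by D\'{e}mazure's theorem recalled in \S\ref{sec:del_Pezzo}, precisely the blow-up of $\PP^2$ in $8$ rational points in general position. The lemma is therefore equivalent to the non-existence of such a configuration over $\FF_q$ for the stated values of $q$.

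For $q\leq 8$ this is immediate from Proposition \ref{prop:7_points}: over these fields one cannot even exhibit $7$ rational points of $\PP^2(\FF_q)$ in general position, let alone $8$. There are no other prime powers in the range $q\leq 8$ to check, so this disposes of the bulk of the lemma with no extra work.

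For each of the four remaining values $q\in\{9,11,13,17\}$, the plan is a direct \texttt{Magma} enumeration using the determinantal criteria of Lemma \ref{lem:general_pos_det}. Concretely, one first fixes a standard projective frame $[1{:}0{:}0]$, $[0{:}1{:}0]$, $[0{:}0{:}1]$, $[1{:}1{:}1]$ to quotient out the $\PGL_3(\FF_q)$-action; then enumerates triples $(P_5,P_6,P_7)$ of remaining rational points, retaining those for which $P_1,\dots,P_7$ lies in general position; and finally, for each retained configuration, scans the remaining rational points of $\PP^2(\FF_q)$ for an $8$th point in general position with the first seven. A key speed-up is to abort the inner loops as soon as a collinearity, conic incidence, or nodal-cubic incidence is detected.

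The main obstacle is the case $q=17$: after fixing the frame, the search space is of order $\sim 10^{7}$ triples, and a naive implementation would be prohibitive. Restricting to $\PGL_3$-orbits and short-circuiting failed checks via Lemma \ref{lem:general_pos_det} nevertheless keeps the runtime modest. A purely combinatorial proof would have to show that, for every configuration of $7$ rational points in general position with $q\in\{9,11,13,17\}$, the union of the $21$ lines through pairs, the $21$ conics through $5$-subsets, and the $7$ nodal cubics through all seven already exhausts $\PP^2(\FF_q)$; this is plausible but delicate, since the tightness required at $q=17$ depends on incidence information going beyond the purely combinatorial type of the configuration, and we therefore prefer the computer search.
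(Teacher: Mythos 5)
Your argument is correct, but it takes a genuinely different route from the paper's. You reduce to the non-existence of $8$ rational points of $\PP^2(\FF_q)$ in general position (valid: a split del Pezzo surface of degree $1$ over a finite field is indeed such a blow-up), dispose of $q\leq 8$ via Proposition \ref{prop:7_points}, and then run a direct enumeration for $q\in\{9,11,13,17\}$. The paper instead blows down a single line to obtain a split del Pezzo surface of degree $2$ carrying a rational point outside its bad locus, rules out $q\leq 8$ by Theorem \ref{thm:dp2}, and then leans on prior classification results: Kaplan's enumeration of the isomorphism classes of split degree $2$ surfaces over $\FF_9,\FF_{11},\FF_{13},\FF_{17}$ and the Knecht--Reyes ``fullness'' results, which show that every rational point of such a surface lies on a line except for a single non-full surface over $\FF_{17}$ (branched over a Kuwata quartic); an explicit check shows its two off-line rational points sit on the ramification locus. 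The trade-off is clear: your approach is self-contained and needs only Lemma \ref{lem:general_pos_det}, at the cost of a search whose $q=17$ case (on the order of $10^6$ frames times $\sim 300$ candidate eighth points, each with kernel computations for the eight matrices $M_i$) is substantially heavier than anything the paper actually runs; the paper's approach outsources almost all computation to the literature and reduces the hard case to one explicit surface.

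One caution on your closing aside: the ``purely combinatorial'' bad locus for the eighth point is not just the $21$ lines, $21$ conics and $7$ cubics nodal at one of $P_1,\dots,P_7$. Condition (3) of Definition \ref{def:gen_pos} also fails when the eight points lie on a cubic singular \emph{at $P_8$ itself}; the locus of such $P_8$ is the sextic Jacobian curve of the net of cubics through the seven points (equivalently, the image in $\PP^2$ of the ramification curve of the associated degree $2$ surface). This omission does not affect your actual proof, since applying Lemma \ref{lem:general_pos_det}(3) for all $i=1,\dots,8$ covers that case, but a combinatorial count that ignored it would be incorrect.
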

\begin{proof}
	Let $S$ be such a  surface. The blow-down of a line on $S$ is a split del
	Pezzo surface $S'$ of degree $2$ with a rational point outside the bad locus.
	Hence it suffices to show that there exists no such surface $S'$,
	for $q$ as in the statement of the lemma.

	Theorem \ref{thm:dp2} rules out $q \leq 9$.
	Kaplan \cite[\S4.3]{Kap13} has determined the isomorphism classes of 
	split del Pezzo surfaces of degree $2$ over the remaining relevant
	finite fields. As observed in \cite[Cor.~4.4]{KR16}, this classification implies that for
	any split del Pezzo surface of degree $2$ with $q=9,11,13$, all its rational
	points lie on its lines (i.e.~is ``full'' in the terminology of \emph{loc.~cit.}).
	By \cite[Prop.~73]{Kap13} there are $7$ isomorphism
	classes of split del Pezzo surfaces of degree $2$ over $\FF_{17}$, and by
	\cite[Thm.~4.5]{KR16} all but one of these is full. As explained in the proof of
	\cite[Thm.~4.5]{KR16}, the non-full surface $S'$
	is branched over the Kuwata quartic curve $C_{234}$
	(see \cite[\S3]{KR16} and \cite[\S8]{Kuw05} for notation). One finds that
	$S'$ has the equation
	$$w^2 = x^4 + 13x^2y^2 + 13y^4 + 13x^2z^2 + 4y^2z^2 + 8z^4.$$
	Using the explicit description of the lines given in \cite[Thm.~3.2]{KR16}
	(see also \cite[Thm.~8.2]{Kuw05}), a calculation shows that $S'$ contains exactly $2$ rational
	points which do not lie on a line. These are the points $[0:6:1:0]$ and 
	$[0:11:1:0]$, which clearly lie on the ramification locus. This completes
	the proof.
\end{proof}

Let $q \geq 16$ and let $S'$ be a split degree 2 del Pezzo surface over $\FF_q$.
It has $56$ lines over $\FF_q$.  
Thus the number of points in the bad locus is at most $56(q+1) + F(q)$, where $F(q)$ is given by \eqref{def:F}. Comparing this with $\#S'(\FF_q) = q^2 + 8q + 1$, we find that there is a rational point outside the bad locus provided $q \geq 53$.

For $q=16$ and $19 \leq q \leq 49$, after a computer search we find $8$ rational points in general position in $\PP^2$.

\subsubsection*{$a=7$} By \cite[Tab.~2]{Ura96a}, every such degree $1$ del Pezzo surface is the blow-up of $\PP^2$ in $6$ rational points and a closed point of degree $2$ in general position. It follows from Theorem \ref{thm:dp3} and Theorem \ref{thm:dp2}, that these surfaces do not exist for $q\leq 5$. A computer search enumerating such collections of points rules out the remaining values $q=7,8,9$.

Let $q \geq 11$ and let $S'$ be a degree 2 del Pezzo surface over $\FF_q$ given as the blow-up of 5 rational points and one closed point of degree 2 in general position, as constructed in the proof of Theorem \ref{thm:dp2}. Precisely 32 of the 56 lines of $S'$ are rational. Moreover any rational point on one of the 24 non-rational lines must lie on one of the $5$  exceptional lines. Therefore, there are at most $32(q+1) + F(q)$ rational points in the bad locus. Comparing this with $\#S'(\FF_q) = q^2 + 6q + 1$, we see that $S'$ admits a rational point away from the bad locus when $q \geq 31$. 

For $11 \leq q \leq 29$ we find six rational points and one closed point of degree $2$ lying in the plane in general position, after a computer search.

\subsubsection*{$a=6$} By \cite[Tab.~2]{Ura96a}, every such degree $1$ del Pezzo surface $S$ is the blow-up of $\PP^2$ in $5$ rational points and a closed point of degree $3$ in general position. Such a surface does not exist for $q=2,3$ by Lemma \ref{lem:five_points}. For $q=4,5$ the required configuration of points in $\PP^2$ is shown not to exist after a computer search.

Let $q \geq 7$ and let $S'$ be the degree 2 del Pezzo surface over $\FF_q$ obtained by blowing-up $\PP^2$ in 4 rational points and one closed point of degree $3$. This surface has precisely 20 of its lines rational, and any rational point on a non-rational line must also lie on some rational line; thus the bad locus contains at most $20(q+1) + F(q)$ rational points. Comparing this with $q^2 + 5q + 1$ yields the required surface for $q \geq 19$.

For $7 \leq q \leq 17$, we find five rational points and a closed point of degree $3$ in general position after a computer search.

\subsubsection*{$a=5$} %Here, by \cite[Tab.~2]{Ura96a}, there are $4$ possible conjugacy classes (numbers $37,68,93,97$ of \emph{loc.~cit.}). 

Such a surface $S$ cannot exist when $q=2$, as by Lemma~\ref{lem:quadratic_1} its Bertini twist would have $a(S_\sigma)=-3$, hence a negative number of rational points by \eqref{eqn:Weil}.

We now assume that $q \geq 3$. There are three rational points and a closed point of degree 4 in $\PP^2$ in general position. The corresponding degree 2 del Pezzo surface $S'$ has precisely 12 of its lines rational. Moreover, there are two pairs of non-rational lines on $S'$ which each meet in a rational point: these correspond to the $2$ lines (resp.~$2$ conics) in $\PP^2$ which pass through non-adjacent quartic points. Thus $2 + 12(q+1) + F(q)$ is an upper bound for the number of rational points in the bad locus, which is less than $q^2 + 4q + 1$ provided $q \geq 13$. 

A computer search shows the existence of four rational points and a closed point of degree $4$ in general position for $5 \leq q \leq 13$, but also reveals that there is no surface $S$ of index $8$ with $a(S) = 5$ over $\FF_q$ for $q \in \{2,3,4\}$. For $q=3,4$ we therefore explicitly find such a surface, necessarily not of index $8$.

For $q=3$ there is a unique such surface up to isomorphism, as discovered by Li \cite[Thm.~3.1.3]{Li10}. It has the equation
$$w^2 = z^3 + (2x^4 + x^2y^2 + 2y^4)z + (x^6 + 2x^4y^2 + y^6).$$
For $q=4$ and $u \in \FF_4\setminus\FF_2$, an example of a surface of trace $5$ is 
\begin{align*}
	& w^2 + xzw + (x^3 + u^2x^2y + uxy^2 + y^3)w = \\
	& \quad z^3 + (u^2x^2 + xy + y^2)x^2z  +  (u^2x^4 + ux^3y  + x^2y^2)y^2.
\end{align*}     

\subsubsection*{$a=4$} For all $q$ we may find two rational points and a closed point of degree 5 in $\PP^2$ in general position. The corresponding degree 2 del Pezzo surface $S'$ has precisely 6 rational lines, and none of its non-rational lines contains a rational point not already on some rational line. Thus there are at most $6(q+1) + F(q)$ rational points in the bad locus, which is less than $q^2 + 3q + 1$ provided $q\geq 8$.
A computer search finds three rational points and a closed point of degree $5$ in general position for $q = 3,4,5,7$, but shows that no index $8$ surfaces occur here for $q=2$.

Surfaces over $\FF_2$ with a unique rational point were found by Li \cite[Thm.~3.1.3]{Li10}.
The Bertini twist of such a surface has trace $4$, an example being
$$w^2 + (x^3 + x^2y + y^3)w = z^3  + (x^4 + x^3y + y^4)z.$$

\subsubsection*{$a=3$} We consider the degree $2$ del Pezzo surface $S'$  given by blowing up $\PP^2$ in a rational point and a closed point of degree $6$. There are $2$ rational lines, and at most $2$ rational points on the non-rational lines (these could occur if the $3$ lines through a pair of opposite sextic points are concurrent, or if the $3$ conics which avoid a pair of opposite sextic points meet in a point). Comparing $2(q+1) +2 + F(q)$ with $1 + 2q + q^2$, we find that there is a rational point away from the bad locus for $q \geq 4$. A computer search finds two rational points and a closed point of degree $6$ in general position when $q=3$, but shows that there are no surfaces of index $8$ with trace $3$ over $\FF_2$.
An explicit surface of trace $3$ over $\FF_2$ is given by
$$w^2  + yzw + (x^3 + xy^2)w = z^3 + (x^4 + xy^3)z + (x^5y + x^4y^2  + x^3y^3 + x^2y^4 + y^6) .$$

\subsubsection*{$a=2$} Let $S'$ be a del Pezzo surface over $\FF_q$ obtained as the blow-up of a closed point of degree $7$ (this was constructed in the proof of Theorem \ref{thm:dp2}). None of the 56 lines on this surface has any rational points. The ramification divisor, and hence the bad locus, has at most $F(q)$ rational points, which is less than the total number $q^2 + q + 1$ of rational points whenever $q=2$ or $q \geq 4$. A computer search finds one rational point and a closed point of degree $7$ in general position for $q=3$.

\subsubsection*{$a=1$} Let $\alpha_1,\dots,\alpha_8$ be a normal basis of $\FF_{q^8}$ over $\FF_q$. Write $P_i=[1:\alpha_i:\alpha^3_i]$ and note that $\{P_1,\dots, P_8\}$ forms a closed point. As in the proof of Theorem \ref{thm:dp3}, an application of Lemma \ref{lem:general_pos_det} shows that no $3$ are collinear and no $6$ lie on conic.

It remains to verify that the eight matrices $M_i$ from Part $(3)$ of Lemma \ref{lem:general_pos_det} all have trivial kernel. To do this we will show that, for $i=1,\dots,8$, the matrix given by removing the last row of $M_i$ has non-zero determinant.
This determinant is
$$
\pm \alpha_i^3 \left(2\alpha_i+\sum_{\substack{1\leq j \leq 8 \\ j \neq i }} \alpha_j\right)\prod_{\substack{1\leq j \leq 8 \\ j \neq i }}(\alpha_i-\alpha_j)^2\prod_{\substack{1\leq k< j\leq 8,\\ k,j\not = i}}(\alpha_k-\alpha_j)\,,
$$
and our claim follows from the fact that the $\alpha_i$ constitute a basis.

\smallskip

\noindent Applying Lemma \ref{lem:quadratic_1} completes the proof of Theorem \ref{thm:dp1}.

\section{An inverse Galois problem} \label{sec:inverse}
The aim of this section is to prove Theorem \ref{thm:inverse} and Theorem \ref{thm:inverse_2}.

\subsection{Hilbert schemes} Let $d\leq 6$.
We first count all \emph{anticanonically embedded} del Pezzo surfaces, viewed
as points of some Hilbert scheme, and show that an analogous limit exists. We then quotient out by the
action of the automorphism group.
We work with the set-up
of \cite[\S4]{JL15}.
Let
$$\mathcal{X}_d  = 
\begin{cases}
	\PP_\ZZ^d, & \quad  \mbox{if } d \geq 3, \\
	\PP(1,1,1,2)_\ZZ, & \quad  \mbox{if } d = 2, \\
	\PP(1,1,2,3)_\ZZ, & \quad  \mbox{if } d = 1.
\end{cases}
$$
Let $\mathcal{G}_d$ be the automorphism group scheme of $\mathcal{X}_d$ over $\ZZ$.
Any degree $d$ del Pezzo surface can be anticanonically embedded into $\mathcal{X}_d$. Let $\mathcal{H}_d$ denote
the Hilbert scheme of anticanonically embedded del Pezzo surfaces of degree $d$ over $\ZZ$. 
By \cite[Lem.~4.1]{JL15}, the morphism $\mathcal{H}_d \to \Spec \ZZ$ is smooth with geometrically connected fibres.

Let $\mathcal{L}_d \to \mathcal{H}_d$ be the universal family of lines of anticanonically embedded del Pezzo surfaces
of degree $d$ (see \cite[\S4.2]{JL15}).

\begin{proposition} \label{prop:etale}
	Let $d\leq 6$. The morphism $\mathcal{L}_d \to \mathcal{H}_d$ is finite \'{e}tale
	with irreducible generic fibre. Let $L$ and $H$ be the function fields of $\mathcal{L}_d$
	and $\mathcal{H}_d$, respectively, and $K$ the Galois closure of $L/H$. 
	Then $K \cap \bar{\QQ} = \QQ$ and 
	$\Gal(K/H) \cong W(\E_{9-d}).$
\end{proposition}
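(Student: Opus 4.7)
The plan is to prove étaleness first, and then to sandwich $\Gal(K/H)$ between the geometric monodromy group and $W(\E_{9-d})$, both of which will be shown to equal $W(\E_{9-d})$.

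For étaleness, observe that for any del Pezzo surface $S$ of degree $d \leq 6$ and any line $L \subset S$ the normal bundle is $N_{L/S} \cong \OO_L(-1)$, so $\HH^0(L, N_{L/S}) = 0$ and $L$ is an isolated reduced point of the Hilbert scheme of $S$. Since the number of lines on such a surface is a fixed constant $N_d$ (for instance $27, 56, 240$ when $d = 3, 2, 1$), the morphism $\mathcal{L}_d \to \mathcal{H}_d$ has constant fibre length $N_d$; being unramified over a smooth base, it is finite étale.

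For the Galois group, the action of $\Gal(K/H)$ on the set of lines of the generic fibre preserves the intersection pairing on $\Pic$ and the canonical class, so by the isomorphism $\Aut(\Pic \bar{S}) \cong W(\E_{9-d})$ recalled in \cite[Thm.~23.9]{Man86} one obtains an embedding $\Gal(K/H) \inj W(\E_{9-d})$. To show this is an isomorphism, I would invoke the classical computation of the geometric monodromy group $\Gal(K \cdot \bar{\QQ} / H \cdot \bar{\QQ})$ on the lines: this group is known to be the full Weyl group $W(\E_{9-d})$ for every $d \leq 6$. For $d = 3$ this is the celebrated theorem of Jordan and Harris on the Galois group of the $27$ lines of a generic cubic surface; the analogous statements for the other degrees are available in the literature on del Pezzo surfaces (see e.g.\ \cite{Dol12}), and for $d \geq 4$ can also be obtained by a specialisation argument starting from a generic cubic surface.

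Combining these two inclusions yields the chain
$$W(\E_{9-d}) = \Gal(K \cdot \bar{\QQ}/H \cdot \bar{\QQ}) \subseteq \Gal(K/H) \subseteq W(\E_{9-d}),$$
forcing equality throughout. Therefore $\Gal(K/H) \cong W(\E_{9-d})$; the coincidence of the arithmetic and geometric monodromy groups is precisely the statement $K \cap \bar{\QQ} = \QQ$, and irreducibility of the generic fibre of $\mathcal{L}_d \to \mathcal{H}_d$ follows from the transitivity of $W(\E_{9-d})$ on the set of lines. The hard part will be the geometric monodromy computation, which provides the crucial classical geometric input; the other steps are formal manipulations with lattices and Hilbert schemes.
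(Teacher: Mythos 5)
Your argument is sound in outline but takes a genuinely different route from the paper's. The paper never computes the geometric monodromy group: instead it exhibits, for \emph{every} number field $k$, a single del Pezzo surface of degree $d$ over $k$ whose splitting field has Galois group $W(\E_{9-d})$ (citing \cite{Sko01}, \cite{Blu10}, \cite{KST89}, \cite{Shi91}, \cite{Ern94}, \cite{VAZ09}), and then deduces the irreducibility of the generic fibre, the identification $\Gal(K/H)\cong W(\E_{9-d})$, and $K\cap\bar{\QQ}=\QQ$ by specialisation alone: if $K\cap\bar{\QQ}$ were a nontrivial field $k$, the Galois closure of $(L\otimes_\QQ k)/(H\otimes_\QQ k)$ would have degree strictly less than $\#W(\E_{9-d})$, contradicting the realisation over that $k$. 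Your sandwich via the geometric monodromy group yields the same conclusions and is arguably more classical for $d=3$ (Jordan, Harris) and $d=2$ (Harris's theorem on the $28$ bitangents); your blow-down argument for $d\geq 4$ also works, since the stabiliser of a line in $W(\E_6)$ is $W(\E_5)$ and the family of pointed cubic surfaces dominates $\mathcal{H}_4$. On the \'{e}taleness step, note that the relevant vanishing is $\HH^1(L,N_{L/S})=0$ (smoothness of the relative Hilbert scheme), not $\HH^0=0$ together with smoothness of the base; this is automatic for $\OO_{\PP^1}(-1)$ and is the content of \cite[Prop.~3.6]{Sch85}, so no harm is done.

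The one soft spot is precisely the step you flag as the hard part. For $d=1$ the assertion that the geometric monodromy on the $240$ exceptional classes is all of $W(\E_8)$ is true, but it cannot simply be waved at \cite{Dol12}; and you genuinely need the \emph{geometric} group here. An arithmetic realisation over $\QQ$ alone (as in \cite{VAZ09}) gives $\Gal(K/H)=W(\E_8)$ but a priori only that the geometric monodromy is a normal subgroup, so $K\cap\bar{\QQ}=\QQ$ would not follow; this is exactly why the paper instead requires realisations over \emph{all} number fields. The full geometric monodromy for $d=1$ can be extracted from the irreducibility of the moduli space of marked del Pezzo surfaces together with the Cremona action of the Weyl group, or from the Mordell--Weil lattice theory of rational elliptic surfaces, but a precise reference (or argument) must be supplied for your proof to be complete.
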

\begin{proof}
	For all number fields $k$, we first note the existence of a degree $d$
	del Pezzo surface over $k$ whose Hilbert scheme of lines
	is irreducible and whose splitting field has Galois group $W(\E_{9-d})$.
	For $d=5$ and $d=6$ this follows from the classification
	of such surfaces 
	(see e.g.~\cite[Thm.~3.1.3]{Sko01} and \cite[Thm.~3.5]{Blu10}, respectively).
	For $d=4$ this is proved in \cite[Thm.~I, p.~17]{KST89}. The case of 
	cubic surfaces has been known for a long time; 
	a modern proof can be found in \cite[Thm.~8.3]{Shi91}.
	For $d=1$, this follows from \cite[Thm.~1.3]{VAZ09}
	(see \cite[Rem.~1.4]{VAZ09}). The result for $d=2$ over $\QQ$ is proved in 
	\cite{Ern94}, and a similar argument to the one given at the
	end of \cite[\S6]{VAZ09}, which we do not reproduce here,
	yields the claim over any number field.
	
	We now turn to the statement of the proposition. 
	That $\mathcal{L}_d \to \mathcal{H}_d$ is finite \'{e}tale is well-known;
	see for example \cite[Prop.~3.6]{Sch85}.
	Next assume that $\mathcal{L}_d \times_\QQ k \to \mathcal{H}_d \times_\QQ k$ has
	reducible generic fibre, for some number field $k$. As $\mathcal{L}_d \to \mathcal{H}_d$ is finite \'{e}tale,
	we deduce that every degree $d$ del Pezzo surface
	over $k$ has reducible Hilbert scheme of lines, which contradicts the above.
	Hence $L$ is well-defined. We also deduce that $L \cap \bar{\QQ} = \QQ$ and that $L\otimes_\QQ k$ is a field for all 
	number fields $k$.
	
	Next note that $\Gal(K/H) \subset W(\E_{9-d})$. 
	However, as $\mathcal{L}_d \to \mathcal{H}_d$ is finite \'{e}tale
	and there exists a degree $d$ del Pezzo surface over $\QQ$ whose splitting field
	has Galois group $W(\E_{9-d})$, we find that actually $\Gal(K/H) \cong W(\E_{9-d})$.
	To see that $K \cap \bar{\QQ} = \QQ$, assume for a contradiction that 
	$K \cap \bar{\QQ} = k$, for some non-trivial number field $\QQ \subset k$. Then, as 
	$k \subset K$ and $[K:H] = \#W(\E_{9-d})$, the Galois closure of $(L\otimes_\QQ k) / (H \otimes_\QQ k)$
	has degree strictly smaller than $\#W(\E_{9-d})$. This contradicts the fact that
	there is a del Pezzo surface of degree $d$ over $k$ whose splitting field has degree $\#W(\E_{9-d})$.
\end{proof}

We now apply Ekedahl's version of the Chebotarev density theorem  \cite[Lem.~1.2]{Eke98}.

\begin{proposition} \label{prop:Hilbert}
	Let $d\leq 6$ and let $C$ be a conjugacy class of $W(\E_{9-d})$. Then
	\begin{equation} \label{eqn:tau_limit}
		\lim_{q \to \infty}\frac{\#\{ S \in \mathcal{H}_d(\FF_q) : C(S) = C\}}
		{\#\mathcal{H}_d(\FF_q)} = \frac{\# C}{\# W(\E_{9-d})}.
	\end{equation}
\end{proposition}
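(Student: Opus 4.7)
The plan is to apply Ekedahl's effective geometric Chebotarev density theorem \cite[Lem.~1.2]{Eke98} --- a consequence of Deligne's Weil~II bounds \cite{Del80} --- to the Galois closure of the finite \'{e}tale cover $\mathcal{L}_d \to \mathcal{H}_d$ furnished by Proposition \ref{prop:etale}.

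First I would form the Galois closure $\tilde{\mathcal{L}}_d \to \mathcal{H}_d$. Since $\mathcal{L}_d \to \mathcal{H}_d$ is finite \'{e}tale, so is $\tilde{\mathcal{L}}_d \to \mathcal{H}_d$, and Proposition \ref{prop:etale} identifies its Galois group with $W(\E_{9-d})$. The condition $K \cap \bar{\QQ} = \QQ$ recorded there is precisely the geometric irreducibility statement needed to ensure that, away from finitely many bad primes, $\tilde{\mathcal{L}}_d \times_\ZZ \bar{\FF}_q \to \mathcal{H}_d \times_\ZZ \bar{\FF}_q$ remains connected with full geometric monodromy group $W(\E_{9-d})$; without it the geometric monodromy would be a proper subgroup and the limiting proportions would be wrong.

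Next, for each $S \in \mathcal{H}_d(\FF_q)$, the Frobenius element acts on the geometric fibre of $\tilde{\mathcal{L}}_d$ over $S$, producing a conjugacy class in $W(\E_{9-d})$. By construction of the cover --- the fibre over $S$ being the set of lines on $S$, permuted compatibly with the action of $W(\E_{9-d})$ on $\Pic \bar{S}$ --- this class agrees with $C(S)$. Ekedahl's theorem then yields
$$\#\{ S \in \mathcal{H}_d(\FF_q) : C(S) = C\} = \frac{\# C}{\# W(\E_{9-d})}\cdot \#\mathcal{H}_d(\FF_q) + O\bigl(q^{\dim \mathcal{H}_d - 1/2}\bigr),$$
with an implied constant independent of $q$. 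Since $\mathcal{H}_d \to \Spec \ZZ$ is smooth with geometrically connected fibres by \cite[Lem.~4.1]{JL15}, Lang--Weil (or Deligne's bounds again) gives $\#\mathcal{H}_d(\FF_q) = q^{\dim \mathcal{H}_d}\bigl(1 + O(q^{-1/2})\bigr)$. Dividing the two estimates and letting $q \to \infty$ produces the claimed limit \eqref{eqn:tau_limit}.

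The main obstacle is entirely hidden in the input Proposition \ref{prop:etale}: verifying $K \cap \bar{\QQ} = \QQ$ requires exhibiting, for each $d \leq 6$, a del Pezzo surface over $\QQ$ (indeed over an arbitrary number field) realising the full Weyl group as Galois group, and this is nontrivial in degrees $1$ and $2$. Once that input is available, the proposition above is essentially a direct application of Ekedahl together with the standard point count for a smooth geometrically connected variety.
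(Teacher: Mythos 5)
Your argument is essentially the paper's: form the Galois closure of $\mathcal{L}_d \to \mathcal{H}_d$, use Proposition \ref{prop:etale} to identify the arithmetic and geometric monodromy groups with $W(\E_{9-d})$, and apply Ekedahl's Chebotarev lemma \cite[Lem.~1.2]{Eke98}; your extra Lang--Weil step is harmless but unnecessary, since Ekedahl's lemma already delivers the proportion with error $O(q^{-1/2})$.

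The one point you gloss over is that the limit runs over all prime powers $q$, whereas Ekedahl's lemma, applied to the cover over $\Spec \ZZ$, equidistributes Frobenius classes only at residue fields of primes, i.e.\ for $q=p$ prime. For $q=p^n$ with $n>1$ the relevant substitution is $\Frob_q=\Frob_p^n$, and powers of an equidistributed element are not themselves equidistributed, so the naive application fails. The paper's fix is to choose, for each $q$, a number field $k$ with a prime $\fp$ of norm $q$, base change the cover to $\OO_k$ (Proposition \ref{prop:etale} guarantees the arithmetic and geometric monodromy remain $W(\E_{9-d})$ after this base change, since the relevant surfaces exist over every number field), and apply Ekedahl over $\OO_k$; the crucial supplementary fact is that the implied constant in the $O(q^{-1/2})$ error is uniform in $q$ and $k$. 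Without this step your estimate only proves the proposition for the limit taken over primes.
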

\begin{proof}
	Let $\mathcal{K}_d$ be the Galois closure of $\mathcal{L}_d \to \mathcal{H}_d$.
	Let $q$ be a prime power and choose a number field $k$ which admits a prime ideal $\fp$ of norm $q$.
	Proposition \ref{prop:etale} implies that the map
	$\mathcal{K}_d \times_\ZZ \OO_k \to \mathcal{H}_d \times_\ZZ \OO_k$ has Galois group $W(\E_{9-d})$
	and that the map
	$\mathcal{K}_d \times_\ZZ \OO_k \to \Spec \OO_k$ has geometrically irreducible generic
	fibre. We may therefore apply \cite[Lem.1.2]{Eke98} to deduce that 
	$$\frac{\#\{ S \in \mathcal{H}_d(\FF_q) : C(S) = C\}}
		{\#\mathcal{H}_d(\FF_q)} = \frac{\# C}{\# W(\E_{9-d})} + O(q^{-1/2}),$$
	where the implied constant is independent of $q$ and $k$. 
	The result follows.
\end{proof}

\subsection{Proof of Theorem \ref{thm:inverse}}
The group scheme $\mathcal{G}_d$ acts on $\mathcal{H}_d$ in a natural way, with two elements lying in the same orbit if and only if they are isomorphic. Using this and
Proposition \ref{prop:Hilbert}, we obtain
\begin{align*}
	\lim_{q \to \infty}\frac{\sum_{S \in \mathcal{S}_d(\FF_q), C(S) = C}
	\frac{1}{|\Aut S|}}{\sum_{S \in \mathcal{S}_d(\FF_q)}\frac{1}{|\Aut S|}}
	& = \lim_{q \to \infty}\frac{\sum_{S \in \mathcal{H}_d(\FF_q)/\mathcal{G}_d(\FF_q) , C(S) = C}
	\frac{1}{|\Aut S|}}{\sum_{S \in \mathcal{H}_d(\FF_q)/\mathcal{G}_d(\FF_q) }\frac{1}{|\Aut S|}}\\
	& = \lim_{q \to \infty}\frac{\#\{ S \in \mathcal{H}_d(\FF_q)
	: C(S) = C\}}{\#\mathcal{H}_d(\FF_q)} \\	
	& =  \frac{\# C}{\# W(\E_{9-d})}. 
\end{align*}

\subsection{Proof of Theorem \ref{thm:inverse_2}}
We first use known results on automorphism groups of generic blow ups of $\PP^2$ \cite{Koi88} to show that the automorphism group is constant on some open subset of $\mathcal{H}_d$ for $d \leq 3$.

\begin{lemma} \label{lem:aut}
	Let $d \leq 3$, let $a_3 = 1$ and $a_1=a_2=2$. There exists an open subscheme
	$\mathcal{U}_d \subset \mathcal{H}_d$, such that $\mathcal{U}_d \to \Spec \ZZ$
	is surjective and
	$$S \in \mathcal{U}_d \implies |\Aut S| = a_d.$$
\end{lemma}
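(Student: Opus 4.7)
The plan is to combine upper semi-continuity of the fibre cardinality of the relative automorphism scheme with Koitabashi's \cite{Koi88} classification of the generic automorphism group of a del Pezzo surface.

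Let $\mathcal{A}_d := \underline{\Aut}(\mathcal{S}_d/\mathcal{H}_d)$ denote the relative automorphism scheme of the universal family $\mathcal{S}_d \to \mathcal{H}_d$. Since a del Pezzo surface of degree $d \leq 5$ has $\HH^0(S, T_S) = 0$, each fibre of $\mathcal{A}_d$ is an \'{e}tale group scheme over the residue field. Combined with the projectivity of $\mathcal{S}_d \to \mathcal{H}_d$, this makes $\mathcal{A}_d$ a finite scheme over $\mathcal{H}_d$ with \'{e}tale fibres, and upper semi-continuity of the rank of $p_{*}\mathcal{O}_{\mathcal{A}_d}$ implies that the locus
$$\mathcal{U}_d := \bigl\{ s \in \mathcal{H}_d : \#\mathcal{A}_d(\overline{\kappa(s)}) \leq a_d \bigr\}$$
is open in $\mathcal{H}_d$. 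Every del Pezzo surface of degree $d = 1$ (resp.\ $d=2$) carries the Bertini (resp.\ Geiser) involution, defined over the base field, so pointwise $|\Aut S| \geq a_d$; combined with the reverse inequality on $\mathcal{U}_d$ and the bound $|\Aut_{\FF_q} S| \leq |\Aut_{\overline{\FF}_q} S|$, we obtain $|\Aut S| = a_d$ for every $S \in \mathcal{U}_d(\FF_q)$.

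To show $\mathcal{U}_d \to \Spec \ZZ$ is surjective, I would use that $\mathcal{H}_d \to \Spec \ZZ$ is smooth with geometrically irreducible fibres by \cite[Lem.~4.1]{JL15}. It then suffices to verify that the closed complement $\mathcal{Z}_d := \mathcal{H}_d \setminus \mathcal{U}_d$ is a \emph{proper} closed subset of each fibre $\mathcal{H}_d \otimes \bar{\FF}_p$ (and over $\bar{\QQ}$). The characteristic-zero case is precisely Koitabashi's theorem \cite{Koi88}.

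The main obstacle will be extending this to all positive characteristics, particularly small primes like $p = 2, 3$. I would handle this by a dimension count: there are only finitely many subgroups $G \subseteq W(\E_{9-d})$ that can arise as (the image in $\Aut(\Pic \bar{S})$ of) the automorphism group of a del Pezzo surface of degree $d$, and for each such $G$ strictly larger than the expected one the locus $\{ S : G \subseteq \Aut S \}$ forms a proper closed subscheme of $\mathcal{H}_d \otimes \bar{\FF}_p$, since imposing an extra symmetry reduces the moduli dimension strictly. The union of these finitely many proper closed subschemes is proper in the irreducible variety $\mathcal{H}_d \otimes \bar{\FF}_p$, so its complement is non-empty, producing the required point of $\mathcal{U}_d$. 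For any particularly pathological small prime one can fall back on exhibiting an explicit del Pezzo surface of degree $d$ whose automorphism group has order exactly $a_d$.
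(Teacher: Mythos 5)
Your overall strategy --- pass to the relative automorphism scheme $\mathcal{A}_d \to \mathcal{H}_d$, obtain openness of the good locus from finiteness/semicontinuity, and feed in Koitabashi \cite{Koi88} for genericity --- is the same as the paper's. The difference is that the paper reads Koitabashi's main theorem as a statement about the generic fibre of $\mathcal{A}_{d,\kappa(x)} \to \mathcal{H}_{d,\kappa(x)}$ for \emph{every} point $x \in \Spec \ZZ$, i.e.\ in every characteristic, and then concludes by spreading out from these finitely many generic fibres; no separate positive-characteristic argument is needed, and the surjectivity of $\mathcal{U}_d \to \Spec\ZZ$ comes for free.

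The genuine gap in your write-up is exactly the positive-characteristic step you flag as ``the main obstacle''. Your justification --- that for each $G$ strictly larger than the expected group the locus $\{S : G \subseteq \Aut S\}$ is a \emph{proper} closed subset ``since imposing an extra symmetry reduces the moduli dimension strictly'' --- is circular: the claim that extra symmetry strictly cuts down the dimension is equivalent to the claim that the generic surface over $\bar{\FF}_p$ has no extra automorphisms, which is what is to be proved. To make it rigorous you would need to bound, conjugacy class by conjugacy class in $\mathcal{G}_d(\bar{\FF}_p)$ (including unipotent and order-$p$ elements, which behave quite differently from characteristic $0$), the dimension of the incidence variety $\{(g,S) : gS = S\}$ and check that its image in $\mathcal{H}_{d,\bar{\FF}_p}$ is of strictly smaller dimension; this is a substantive computation that the proposal does not carry out, and the fallback of ``exhibiting an explicit surface'' at bad primes is likewise not done. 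Two smaller points: (i) openness of $\mathcal{U}_d$ via semicontinuity of the rank of $p_*\mathcal{O}_{\mathcal{A}_d}$ requires $\mathcal{A}_d \to \mathcal{H}_d$ to be \emph{finite}, i.e.\ proper, which does not follow merely from projectivity of the family together with finite fibres; the paper instead gets finite type from the closed embedding $\mathcal{A}_d \subset \mathcal{G}_d$ and deduces finiteness only over an open set by spreading out from the finite generic fibre, which suffices. (ii) For $d=1,2$ in characteristic $2$ one should also verify that the generic fibre of $\mathcal{A}_d$ is reduced, since otherwise your rank bound at the generic point would exceed $a_d$ and the open locus you define could fail to dominate that fibre.
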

\begin{proof}
	Let $\mathcal{A}_d\to \mathcal{H}_d$ be the relative automorphism group scheme for the
	universal family of degree $d$ del Pezzo surfaces over $\mathcal{H}_d$.
	This embeds into $\mathcal{G}_d$ as a closed
	subgroup scheme, hence has finite type over $\ZZ$. 
	The main theorem of \cite{Koi88} implies that for each $x \in \Spec \ZZ$,
	the generic fibre of $\mathcal{A}_{d,\kappa(x)} \to 
	\mathcal{H}_{d,\kappa(x)}$
	is a finite scheme of degree $a_d$, where $\kappa(x)$ denotes the residue field of $x$.
	Moreover, when $d=2$ or $1$  it even 
	has $2$ rational points (corresponding to the Geiser and Bertini involution,
	respectively). As $\mathcal{A}_d \to \mathcal{H}_d$ is of finite type,
	the result follows by spreading out.
\end{proof}

We now prove Theorem \ref{thm:inverse_2}.
Let $\mathcal{U}_d \subset \mathcal{H}_d$ and $a_d$ be as in Lemma \ref{lem:aut}. As $\mathcal{H}_{d,\FF_p}$ is geometrically
integral for all primes $p$, the Lang-Weil estimates \cite{LW54} imply that, in the limit \eqref{eqn:tau_limit}, 
proper Zariski closed subsets are negligible. Hence applying Proposition \ref{prop:Hilbert} and Lemma \ref{lem:aut},
we obtain
\begin{align*}
	\lim_{q \to \infty} \frac{\#\left\{S \in \mathcal{S}_d(\FF_q) : C(S) = C\right\}}{\# \mathcal{S}_d(\FF_q)}
	& = \lim_{q \to \infty}\frac{\#\{ S \in \mathcal{H}_d(\FF_q)/\mathcal{G}_d(\FF_q) 
	: C(S) = C\}}{\#(\mathcal{H}_d(\FF_q)/\mathcal{G}_d(\FF_q))} \\	
	& = \lim_{q \to \infty}\frac{\#\{ S \in \mathcal{U}_d(\FF_q)/\mathcal{G}_d(\FF_q) : 
	 C(S) = C\}}{\#(\mathcal{U}_d(\FF_q)/\mathcal{G}_d(\FF_q))} \\	
	& = \lim_{q \to \infty}\frac{\#\{ S \in \mathcal{U}_d(\FF_q) : 
	 C(S) = C\}/a_d}{\#\mathcal{U}_d(\FF_q)/a_d} \\	
	& = \lim_{q \to \infty}\frac{\#\{ S \in \mathcal{H}_d(\FF_q) : C(S) = C\}}{\#\mathcal{H}_d(\FF_q)} \\
	& =  \frac{\# C}{\# W(\E_{9-d})}. 
\end{align*}

\section{A corrected version of Manin's and Swinnerton-Dyer's table} \label{sec:table}
In this section we present our corrected version of Manin's \cite[Tab.~1,p.~176]{Man86} and Swinnerton-Dyer's
\cite[Tab.~1]{SD67} tables on cubic surfaces over finite fields. 

\subsection{Notation}
We mostly use the same notation as Manin, except for Column $8$ (see \S \ref{sec:C8}),
and a new Column $10$, which relates to Urabe's tables on del Pezzo surfaces of degree $1$ and $2$ \cite{Ura96a}. We highlight by $*$ a mistake in Manin's table and give the correct value (see Section \ref{sec:mistakes} for detailed explanations on these corrections).

Note that, by Corollary \ref{cor:inverse}, every conjugacy class of $W(\Esix)$ arises from some smooth cubic surface over $\FF_q$ for all
sufficiently large $q$. We may therefore describe the columns in terms of the geometry of such a surface $S$. 
We take $q$ sufficiently large and denote by $\Frob_q$ the Frobenius element of $\Gal(\bar{\FF}_q/\FF_q)$.

\begin{enumerate}
	\item[0.] This column denotes the numbering of the relevant conjugacy class, in the notation 
	of Frame \cite{Fra51} and 	Swinnerton-Dyer \cite{SD67}.
	\item[1.] This is Manin's number for the conjugacy class.
	\item[2.] The index $i(S)$ is defined to be the size of the largest Galois invariant 
	collection of pairwise skew lines over $\bar{\FF}_q$.
	\item[3.] The order of an element in the conjugacy class.
	\item[4.] The reciprocal measure $\mu(C)^{-1}$ of $C$, defined to be $\#W(\Esix)/\#C$.
	\item[5.] The  eigenvalues of $\Frob_q$ acting on $(\Pic \bar{S}) \otimes \CC$.
	The notation $n^b$ means that there are exactly $b$ eigenvalues that are primitive $n$th roots of unity.
	\item[6.] The trace $a(S)$ of $\Frob_q$ acting on $\Pic \bar{S}$.
	\item[7.] The Galois cohomology group $\mathrm{H}^1(\FF_q, \Pic \bar{S})$. We use the notation $n^2$
	to denote the group $(\ZZ/n\ZZ)^2$.
	\item[8.] The orbit type of $\Gal(\bar{\FF}_q/\FF_q)$ on the lines of $\bar{S}$ (see \S \ref{sec:C8}).
	\item[9.] Information about the blow-downs of $S$. If $S$ is minimal  this column is empty. For $i(S)=1$,
	we give Manin's decomposition type \cite[Tab.~2]{Man86} of the quartic del Pezzo surface obtained
	by blowing down a line.
	For $i(S)>1$, we give an orbit type of a collection of $i(S)$
	Galois invariant skew lines over $\bar{\FF}_q$.
	\item[10.] The conjugacy class of the del Pezzo surface of degree $2$ in Urabe's table 
	\cite[Tab.~1]{Ura96a}
	obtained by blowing-up $S$ in a rational point not on a line.% (such a point exists for sufficiently large $q$).
\end{enumerate}
We have confirmed that the decompositions claimed by Manin
for quartic del Pezzo surfaces in \cite[Tab.~2]{Man86} and \cite[Tab.~3]{Man86} are correct in the latest
edition of his book.

\subsubsection{Column $8$} \label{sec:C8}
In our table we use new notation to describe the orbits. 
Both Swinnerton-Dyer \cite{SD67} and Manin \cite{Man86} only wrote down the size of each orbit, and separated
orbits of the same size and different ``configuration''. For example, for No.~5
they wrote $3,6^3, 6$ to denote an orbit of length $3$ and four orbits of length $6$,
of which three have the same configuration and the other a different configuration.

We take a slightly different approach, which makes clear which \emph{exact} configurations these correspond to.
First recall that to each cubic surface $S$ we can associate a graph as follows: one vertex for each line,
with two vertices being adjacent if and only if the corresponding lines meet. This graph is independent
of $S$, up to isomorphism, and is the so-called \emph{Schl\"{a}fli graph} $G$, which has
automorphism group $W(\E_6)$ \cite[Thm.~23.9]{Man86}. 
Let now $C \subset W(\Esix)$ be a conjugacy class and $w \in C$, considered with its action on $G$. Let $O$ be an orbit of vertices for $w$. We define the \emph{type} of $O$
to be the isomorphism class of the induced subgraph of $G$ determined by $O$. We define the \emph{orbit type} of $C$ to be the 
multiset of types of its orbits for some $w \in C$; this is independent of the choice of $w$, up to isomorphism. The graphs which arise this way are quite special;
by definition they all admit a vertex-transitive action of a cyclic group. Graph theorists call such graphs
\emph{Circulant graphs}.

We notate these graphs as follows. The notation $n_{m_1,\ldots,m_s}^b$ denotes $b$ copies of
the circulant graph with vertices labelled $0, \ldots, n-1$, and vertex $0$ being adjacent 
to the vertices $\pm m_1,\ldots, \pm m_s \bmod n$
(this determines all the edges of the graph). E.g.
\begin{align*}
		n &= \text{Edgeless graph of order $n$}, \quad n_1 = \text{Cycle graph of order $n$}.
\end{align*}
The following well-known graphs occur in our table.
\begin{align*}
	6_{1,3} &= \text{Utility graph}, \quad 6_{2,3} = \text{Triangular prism}, \\
	8_{1,4} &= \text{Wagner graph}, \quad 10_{1,3} = \text{5-crown graph}.
\end{align*}
Urabe \cite{Ura96a} separates orbits by their \emph{characteristic sequences}, which is essentially
the list of vertices which meet the vertex $0$. However, different characteristic
sequences may give rise to isomorphic graphs, e.g.~$9_{1,3} \cong 9_{2,3} \cong 9_{3,4}$ (in the above notation).
In particular, the reader should bear in mind that some of the orbits in Urabe's table which are separated by their
characteristic sequences have isomorphic graphs.

\begin{table}[ht]
\renewcommand\thetable{7.1}
\makebox[\textwidth][c]{

$\begin{array}{lllllllllll}
 \hline
 0. & 1. & 2.  & 3.  &  4.  & 5.  & 6. 
 & 7. & 8. &9. \text{ Blow} &10. \\
 \text{Class} & \text{No.} & i(S) & \text{Order} & \mu(C)^{-1} 
 & \text{Eigenvalues} & a(S) &  \mathrm{H}^1 & \text{Orbit type} & \text{down} & \text{Blow-up} \\
 \hline
 C_{13} & 1  & 0   & 12 & 12    & 1,3^2,12^4    & 0  & 0   & 3_1, 12^2_{1,4,6} 		&   	& 22\\
 C_{12} & 2  & 0   & 6  & 72    & 1,3^2,6^4     & 2  & 0   & 3_1, 6_{2,3}^4  		& 		& 24\\
 C_{11} & 3  & 0   & 3  & 648   & 1,3^6         & -2 & 3^2 & 3_1^9  				&		& 20\\
 C_{14} & 4  & 0   & 9  & 9     & 1,9^6         & 1  & 0   & 9_{1,3}^3 				& 		& 23\\
 C_{10} & 5  & 0   & 6  & 36    & 1,2^2,3^2,6^2 & -1 & 2^2 & 3_1,6_{1,3}^3,6_{2,3} 	&		& 21\\
 C_{24} & 6  & 1   & 12 & 12    & 1^2,2,4^2,6^2 & 2  & 0   & 1,4_2,4_1,6_3,12_{2,3} & \text{I}	& 33\\
 C_{20} & 7  & 1   & 8  & 8     & ^* 1^2,2,8^4  & 1  & ^*0 & 1,2_1,8_4,8_{1,4}^2 	& \text{XVIII}	& 32\\
 C_{7}  & 8  & 1   & 6  & 36    & 1^3,2^2,6^2   & 2  & 0   & 1^3,2_1^3,6_3^3 		& \text{II}	& 26\\
 C_{19} & 9  & 1   & 4  & 96    & 1^2,2^3,4^2   & -1 & 2^2 & 1,2_1^3,4_2,4_1^4 		& \text{X} 	& 29\\
 C_{4}  & 10 & 1   & 4  & 96    & 1^3,4^4       & 3  & ^*0 & 1^3,4_2^6 				& \text{V} 	& 27\\
 C_{3}  & 11 & 1   & 2  & 1152  & 1^3,2^4       & -1 & 2^2 & 1^3,2_1^{12} 			& \text{IV}	& 25\\
 C_{25} & 12 & ^*5 & 10 & 10    & 1^2,2,5^4     & 0  & 0   & 2,5,5_1^2,10_{1,3} 	& 5		& 58\\
 C_{22} & 13 & 3   & 6  & 36    & 1^2,2,3^4     & -1 & 0   & 3^2,3_1^3,6_1^2		& 3		& 42\\
 C_{8}  & 14 & ^*5 & 6  & 24    & 1^3,2^2,3^2   & 0  & 0   & 1,2^2,2_1^2,3^2,6_{1}^2& 2,3	& 53\\
 C_{23} & 15 & 6   & 6  & 12    & 1^2,2,3^2,6^2 & 1  & 0   & 3_1,6^2,6_{1,3},6_{2,3}& 6 	& 59\\
 C_{15} & 16 & 6   & 5  & 10    & 1^3,5^4       & 2  & 0   & ^*1^2,5^3,5_1^2 		& 1,5	& 56\\
 C_{5}  & 17 & 6   & 4  & 16    & 1^3,2^2,4^2   & 1  & 0   & 1,2^2,2_1,4^2,4_2,4_1^2& 2,4 	& 55\\
 C_{9}  & 18 & 6   & 3  & 108   & 1^3,3^4       & 1  & 0   & 3^6,3_1^3 				& 3,3	& 54\\
 C_{18} & 19 & 6   & 4  & 32    & 1^4,2,4^2     & 3  & 0   & 1^5,2_1,4^4,4_2 		& 1^2,4	& 52\\
 C_{21} & 20 & 6   & 6  & 36    & 1^4,2,3^2     & 2  & 0   & 1^3,2^3,3^4,6_{1} 		& 1,2,3	& 51\\
 C_{17} & 21 & 6   & 2  & 96    & 1^4,2^3       & 1  & 0   & 1^3,2^6,2_1^6 			& 2^3	& 50\\
 C_{6}  & 22 & 6   & 3  & 216   & 1^5,3^2       & 4  & 0   & 1^9,3^6 				& 1^3,3	& 49\\
 C_{2}  & 23 & 6   & 2  & 192   & 1^5,2^2       & 3  & 0   & 1^7,2^8,2_1^2 			& 1^2,2^2&48\\
 C_{16} & 24 & 6   & 2  & 1440  & 1^6,2         & 5  & 0   & 1^{15},2^6 			& 1^4,2	& 47\\
 C_{1}  & 25 & 6   & 1  & 51840 & 1^7           & 7  & 0   & 1^{27} 				& 1^6	& 46\\
%\hline
\end{array}$}

 \caption{Conjugacy classes of $W(\Esix)$.}
 \label{tab:Manin}
\end{table} 

\subsection{Compilation}
We compiled the table using \texttt{Magma} \cite{Magma}. The code is available on the authors' web pages.
We constructed the Schl\"{a}fli graph, together with its action of $W(\E_6)$,
using the method outlined in \cite[\S3.1.1]{JL15}. One calculates the conjugacy classes, order, measure, index and 
orbit type using standard commands for permutation groups. The eigenvalues and trace were found by constructing the relevant representation of $W(\E_6)$. 
We calculated $\mathrm{H}^1(\FF_q, \Pic \bar{S})$ using \cite[Prop.~31.3]{Man86}.

We identified which degree $2$ del Pezzo surface in \cite[Tab.~1]{Ura96a} is obtained by blowing-up a rational
point not on a line by comparing the eigenvalues
(these can be deduced from the ``Frame symbol'' of \emph{loc.~cit}). Blowing-up
a rational point multiplies the characteristic polynomial by $(x-1)$.
A conjugacy class of $W(\Esix)$
is uniquely determined by its eigenvalues,
however this is no longer true
for  $W(\Eseven)$. Nonetheless, for many characteristic
polynomials  there is a unique conjugacy class, and one finds more than 
one potential conjugacy class of $W(\Eseven)$ only when blowing-up No.'s $5,9,11,15,17,21$ from Table \ref{tab:Manin}.
For these classes there are exactly two conjugacy
classes of $W(\Eseven)$ with the same characteristic polynomial.
However, in all these cases only one  contains a line,
which isolates the relevant class.

\subsection{Corrections} \label{sec:mistakes}
We now explain the issues with Manin's table.

\subsubsection*{Typesetting}
There are some typesetting issues with  Columns $5$ and $8$
which make them difficult to read; we have fixed these  in our table.

\subsubsection*{Column $5$:}
The mistake here is a minor typo due to recording the incorrect value from Swinnerton-Dyer's table \cite{SD67},
and easily corrected.

\subsubsection*{Column $7$:}
The errors regarding the calculation of $\mathrm{H}^1(\FF_q, \Pic \bar{S})$
were first discovered by Urabe \cite{Ura96a}; we have nothing new to add.

\smallskip

We now explain the mathematical mistakes we found, regarding the index and the orbit type.
The corrected values appear here for the first time.

\subsubsection*{Column $2$:}
Let $q$ be a sufficiently large prime power.

\begin{itemize}
\item[No.~12:] Let $S'$ be a smooth non-split quadric surface
over $\FF_q$. Let $S$ be the cubic surface obtained by blowing-up $S'$ in a closed point of degree
$5$ in general position. One easily sees that the splitting field of $S$ has degree $10$,
hence such a surface must have class 12. But, by construction, one may contract 
an orbit of length $5$ on $S$, hence the index is not $2$, as claimed by Manin.

\item[No.~14:] Again let $S'$ be a smooth non-split quadric surface
over $\FF_q$. Let $S$ be the blow-up of $S'$ in closed points of degree $2$ and $3$
in general position. This surface has splitting field of degree $6$ and $a(S) = 0$,
thus it must have class 14. As before, we see that the index is not $3$, as claimed
by Manin.
\end{itemize}

These constructions show that such surfaces $S$ have index at least $5$. However, in both 
cases we have $a(S) = 0$, hence Lemma \ref{lem:blow_up} implies that the index is not $6$, so
the index is indeed $5$ as claimed.

\subsubsection*{Column $8$:}
We found an error regarding the orbit type of No.~16. This mistake can be traced
back to \cite[Tab.~1]{SD67}. Swinnerton-Dyer claims that the configuration is $1^2,5^2,5^2,5$
(in his notation), however it is in fact $1^2,5^3,5_1^2$ (in our notation).

Consider the blow-up of $\PP^2$ in a rational
point and a closed point of degree $5$ in general position over $\FF_q$.
The splitting field has degree $5$, hence by Table \ref{tab:Manin}
it corresponds
to the class $16$. The orbits of length $1$ come from the rational point
and the conic passing through the quintic points. The orbits of length $5$ arise from:
\begin{enumerate}
	\item $5$ lines above the quintic points.
	\item $5$ lines passing through the rational point and one of the quintic points.
	\item $5$ conics passing through the rational point and four of the quintic points.
	\item $5$ lines passing through adjacent quintic points.
	\item $5$ lines passing through non-adjacent quintic points.
\end{enumerate}

One checks that the first three types consist of pairwise skew lines,
whereas the last two types consist of lines meeting exactly two others
(this can be deduced from \cite[Rem.~V.4.10.1]{Har77}, for example). This gives rise
to orbit type $1^2,5^3,5_1^2$, as claimed.

\smallskip

Our guess is that Urabe over-looked the remaining mistakes in Manin's table as they concern
the index and orbit type, which behave erratically with respect to blow-ups.

\end{document}